\tikzset{ext/.style={circle, draw,inner sep=1pt},int/.style={circle,draw,fill,inner sep=1pt},nil/.style={inner sep=1pt}}
\tikzset{exte/.style={circle, draw,inner sep=3pt},inte/.style={circle,draw,fill,inner sep=3pt}}
\tikzset{diagram/.style={matrix of math nodes, row sep=3em, column sep=2.5em, text height=1.5ex, text depth=0.25ex}}
\tikzset{diagram2/.style={matrix of math nodes, row sep=0.5em, column sep=0.5em, text height=1.5ex, text depth=0.25ex}}
\theoremstyle{plain}
\newtheorem{thm}{Theorem}[section]
\newtheorem{prop}[thm]{Proposition}
\newtheorem{cor}[thm]{Corollary}
\newtheorem{lemma}[thm]{Lemma}
\theoremstyle{definition}
\newtheorem{ex}[thm]{Example}
\newtheorem{rem}[thm]{Remark}
\newtheorem{notation}[thm]{Notation}
\newcommand{\p}{\partial}
\newcommand{\R}{{\mathbb{R}}}
\newcommand{\Z}{{\mathbb{Z}}}
\newcommand{\Q}{{\mathbb{Q}}}
\newcommand{\HGC}{{\mathrm{HGC}}}
\newcommand{\Graphs}{{\mathsf{Graphs}}}
\newcommand{\TGC}{{\mathrm{TGC}}}
\newcommand{\UTT}{{\mathrm{UTT}}}
 \newcommand{\mP}{\mathcal{P}}
 \newcommand{\mA}{\mathcal{A}}
 \newcommand{\mB}{\mathcal{B}}
\newcommand{\ICG}{\mathrm{IG}}
\newcommand{\mF}{\mathcal{F}}
\newcommand{\Def}{\mathrm{Def}}
\newcommand{\Poiss}{\mathsf{Pois}}
\newcommand{\op}{\mathcal}
\newcommand{\Lie}{\mathsf{Lie}}
\newcommand{\Com}{\mathsf{Com}}
\newcommand{\FM}{\mathcal{F}}
\newcommand{\IF}{\mathcal{IF}}
\newcommand{\maxx}{\mathrm{max}}
\newcommand{\bpm}{\begin{pmatrix}}
\newcommand{\epm}{\end{pmatrix}}
\newcommand{\MC}{\mathsf{MC}}
\newcommand{\mU}{\mathcal{U}}
\newcommand{\hotimes}{\mathbin{\hat\otimes}}
\newcommand{\Op}{\mathrm{Op}}
\newcommand{\La}{\Lambda}
\DeclareMathOperator{\Emb}{Emb}
\DeclareMathOperator{\Imm}{Imm}
\DeclareMathOperator{\Embbar}{\overline{Emb}}
\newcommand{\Mor}{\mathrm{Mor}}
\newcommand{\gr}{\mathrm{gr}}
\DeclareMathOperator{\TopCat}{Top}
\newcommand{\beq}[1]{\begin{equation}\label{#1}}
\newcommand{\eeq}{\end{equation}}
\newcommand{\IBimod}{\mathrm{IBimod}}
\newcommand{\hofiber}{\mathrm{hofiber}}
\newcommand{\SO}{\mathrm{SO}}
\newcommand{\Res}{\mathrm{Res}}
\newcommand{\Ind}{\mathrm{Ind}}
\newcommand{\ICGp}{\mathrm{pIG}}
\DeclareMathOperator*{\cro}{\mathrm{cr}}
\newcommand{\ar}{\mathrm{ar}}
\newcommand{\iHom}{{\mathcal{H}om}}
\newcommand{\HopfColl}{\DGCA^{\Sigma}}
\newcommand{\HopfGM}{\DGCA^{\Gamma}}
\DeclareMathOperator*{\colim}{\mathrm{colim}}
\newcommand{\mC}{{\mathcal C}}
\newcommand{\DGCA}{\mathsf{dgCom}}
\newcommand{\sset}{\mathsf{sset}}
\newcommand{\nuCom}{\Com_+}
\newcommand{\cosk}{\mathrm{cosk}}
\newcommand{\Indec}{\mathrm{Indec}}
\newcommand{\COp}{\mathsf{C}}
\newcommand{\ggR}{\mathcal R} 
\newcommand{\fg}{\mathfrak g}
\begin{document}
\title{On the rational homotopy type of embedding spaces of manifolds in $\R^n$}

\author{Benoit Fresse}
\address{Univ. Lille, CNRS, UMR 8524 -- Laboratoire Paul Painlev\'e, F-59000 Lille, France}
\email{Benoit.Fresse@univ-lille.fr}

\author{Victor Turchin}
\address{Department of Mathematics\\
Kansas State University\\
Manhattan, KS 66506, USA}
\email{turchin@ksu.edu}

\author{Thomas Willwacher}
\address{Department of Mathematics \\ ETH Zurich \\
R\"amistrasse 101 \\
8092 Zurich, Switzerland}
\email{thomas.willwacher@math.ethz.ch}

 \keywords{Embedding calculus, graph-complexes, infinitesimal bimodules over an operad, rational homotopy theory, graph-complexes, 
 isotopuy classes of embeddings, Maurer-Cartan elements.}

\thanks{B.F. acknowledges support from the Labex CEMPI (ANR-11-LABX-0007-01) and from the FNS-ANR project OCHoTop (ANR-18CE93-0002-01). V.T. has benefited from a visiting position of the Labex CEMPI (ANR-11-LABX-0007-01) at the Universit\'e de Lille and a visiting  position at the Max Planck Institute for Mathematics in Bonn for the achievement of this work. He  was also partially supported by the Simons Foundation travel grant, award ID:~519474.  B.F. and V.T. are grateful to the ETH for hospitality.
T.W. has been partially supported by the NCCR SwissMAP funded by the Swiss National Science Foundation, and the ERC starting grant GRAPHCPX (ERC StG 678156)}


\begin{abstract}
We study the spaces of embeddings of manifolds in a Euclidean space.
More precisely we look at the homotopy fiber of the inclusion of these spaces to the spaces of immersions.
As a main result we express the rational homotopy type of connected components of those embedding spaces through combinatorially defined $L_\infty$-algebras of diagrams.
\end{abstract}

\maketitle


\sloppy

\section{Introduction}

\subsection{Background}
Let $L$ be a smooth compact submanifold possibly with boundary in $\R^m$.
Let $\Emb(L,\R^n)$ be the space of smooth embeddings $L\hookrightarrow\R^n$, $n-m\geq 2$.
We allow $L$ to be disconnected with components of possibly different dimensions.
For example, one can choose $L=\coprod_{k=1}^r S^{m_k}\subset\R^m$, $m=\maxx(m_k)+1$.
The sets of isotopy classes of such links were studied by Haefliger in~\cites{Haefliger1,Haefliger2,Haefliger2.5}.

Let $i: L\hookrightarrow\R^n$ be the trivial embedding, which we define by composing the canonical inclusion $L\subset\R^m$ with a fixed linear embedding of $\R^m$
into $\R^n$ (for instance, we can choose to identify $\R^m$ with the subspace spanned by the $m$ first coordinate axes of $\R^n$).
Define the \emph{space of embeddings modulo immersions} as the homotopy fiber over the trivial embedding $i$
of the obvious map from the space of smooth embeddings to the space of immersions:
\beq{equ:hofib}
\Embbar(L,\R^n) := \hofiber(\Emb(L,\R^n)\to\Imm(L,\R^n)).
\eeq
Roughly, $\Embbar(L,\R^n)$ can be thought of as a space of embeddings trivial as immersions.
More precisely a point in such a space is a pair $(f,h)$, where $f: L\hookrightarrow\R^n$ is a smooth embedding and $h: L\times [0,1]\to\R^n$
is a regular homotopy from $f$ to the inclusion $i$.
The rational homology groups of these spaces $\Embbar(L,\R^n)$ have been extensively studied in~\cites{AroneLambVol,Turchin2}
under the codimension restriction 
 $n\geq 2m+1$. (Note that the embedding spaces are connected in this codimension range.)
In this paper we enlarge the range of codimension to $n\geq m+2$ and we describe the rational homotopy type of the components of such spaces.
In addition we show that the results of rational homotopy theory allow one to obtain some information about the set of isotopy classes $\pi_0\Embbar(L,\R^n)$.

For technical reasons we replace $L$ by its open tubular neighborhood $N(L)\subset\R^m$.
Note that the map $L\hookrightarrow N(L)$ induces a weak homotopy equivalence on our embedding spaces (see~\cite[Proof of Proposition 1.2]{STT}):
\beq{eq:L_NL}
\Embbar(N(L),\R^n)\xrightarrow{\simeq}\Embbar(L,\R^n).
\eeq

We also want to study the case where some of the components of the manifold $L$ extend to infinity.
We assume for simplicity that any such unbounded component of $L$ is closed and coincides with a Euclidean subspace near infinity.
We then consider the space of embeddings modulo immersions $\Embbar_\p(L,\R^n)$
in which embeddings and immersions coincide with the inclusion $L\subset\R^m\subset\R^n$
outside some ball of a sufficiently big radius.
For example, one can take $L = \coprod_{i=1}^r\R^{m_i}\subset\R^m$, $m = \maxx(m_i)+1$ (or just $m = m_1$ if $r=1$),
a disjoint union of $r$ affine spaces $\R^{m_i}$ parallel to the coordinate plane
spanned by the first $m_i$ basis vectors.
These spaces $\Embbar_\p(\coprod_{i=1}^r\R^{m_i},\R^n)$, called \emph{spaces of string links modulo immersions}, have been studied in~\cite{Turchin2,Turchin3,Song,STT}.
It was shown in~\cite[Corollary~2.6]{STT} that the rational homotopy groups of these spaces are computed by certain hairy graph complexes, provided that $n\geq 2\maxx(m_i)+2$.
Moreover, it was conjectured in~\cite[Conjecture~3.1]{STT} that the result holds in the lower range $n\geq\maxx(m_i)+3$.
This conjecture is proven by our Corollary~\ref{cor_main}.
Note that the sets $\pi_0\Embbar(\coprod_{i=1}^r S^{m_i},\R^n) = \pi_0\Embbar_\p(\coprod_{i=1}^r \R^{m_i},\R^n)$
form finitely generated abelian groups
in this range (see~\cite{Haefliger1,Haefliger2,Haefliger2.5}, \cite[Theorem~4.2, Lemma~4.7]{STT}, Proposition~\ref{pro:pi0}).
We study these spaces and we discuss the comparison of our results with Haefliger's in sections~\ref{ss:strings}-\ref{ss:emb_long}.

For technical reasons, we again replace such a ``long manifold'' $L$
by $N_\infty(L) := N(L)\cup(\R^m\setminus D^m_R)$,
the tubular neighborhood of $L$ in $\R^m$ union the complement of a ball of a sufficiently big radius $R$.
We still have a weak homotopy equivalence:
\beq{eq:L_NL2}
\Embbar_\p(N_\infty(L),\R^n)\xrightarrow{\simeq}\Embbar_\p(L,\R^n).
\eeq
Note that the bounded framework (the case where $L\subset\R^m$ is a compact submanifold) can be reduced to the unbounded one
by taking $N_{\infty}(L) = N(L)\cup(\R^m\setminus D^m_R)$,
where $D^m_R$ is a ball that contains $N(L)$,
so that:
\beq{eq:L_NL3}
\Embbar(N(L),\R^n)\simeq\Embbar_\p(N_\infty(L),\R^n).
\eeq

\subsection{Main results}
From now on we assume that $M\subset\R^m$ is the complement of a compact submanifold in $\R^m$ (possibly with boundary).
We also consider the space $M_* := M\cup\{\infty\}$, topologized as a subset of $S^m = \R^m\cup\{\infty\}$.
The Goodwillie--Weiss Taylor tower of $\Embbar_\p(M,\R^n)$ has been expressed by Arone and the second author through derived mapping spaces of infinitesimal bimodules
(see \cite[Proposition~6.9]{Turchin2}, \cite[Section~6]{Turchin4}).
The results obtained in these references, combined with the convergence result of Goodwillie, Klein and Weiss~\cite{GK,GKW,GW}\footnote{For the convergence one needs the ambient dimension $n$ minus the handle dimension of $M_*$ to be at least three.},
imply that, for $n-m\geq 3$ or $n-m\geq 2$ and $M\subsetneq\R^m$,
one has a weak equivalence
\beq{equ:AT}
\Embbar_\p(M,\R^n)\simeq\IBimod^h_{\FM_m}(\IF_M,\FM_n),
\eeq
where $\FM_k$ is the Fulton--MacPherson operad equivalent to the little discs operad $E_k$ (see~\cite{Salvatore}),
$\IF_M$ is a sequence of locally compactified configuration spaces of points in $M_*$,
and $\IBimod^h_{\FM_m}(\dots)$ is the derived mapping space in the category of infinitesimal bimodules (see Section~\ref{ss:FM}).
In the case of a bounded submanifold, one has a similar formula in which instead of infinitesimal bimodules,
the mapping space of right modules is used (see~\cite[Proposition~6.9]{Turchin2}, \cite[Proposition~6.1]{WBdB}, \cite[Theorem~2.1]{Turchin4}).
The additional infinitesimal left action is necessary to encode the behavior of embeddings at infinity.
We assume that the reader is familiar with this formula and with the underlying algebraic objects.

The main goal of this paper is to understand the right-hand side of \eqref{equ:AT} in the realm of the rational homotopy theory.
We formulate our result in terms of graph complexes.
We use the Sullivan rational homotopy theory and the Sullivan differential graded algebras of piece-wise linear differential forms $\Omega^*(X)$
which can be associated to any space $X$ (for instance, we are going to consider the case $X = M_*$).
We say, by abusing classical conventions, that a differential graded commutative algebra $R$
defines a \emph{Sullivan model} of the space $X$
when $R$ is quasi-isomorphic to the Sullivan differential graded algebra $\Omega^*(X)$.\footnote{In the literature, a Sullivan model usually refers to an object equipped with a connected cell complex structure in the model category of differential graded commutative algebras.
But we do not need to require the existence of such a cell complex structure for our models in general.
Therefore, we abusively use the phrase ``Sullivan model'' to refer to any differential graded commutative algebra quasi-isomorphic to $\Omega^*(X)$.
Besides we do not make any assumption on the spaces $X$ to which we apply this concept.
In particular, we do not necessarily assume that $X$ is connected,
so that the definition of a connected cell complex model of the differential graded algebra $\Omega^*(X)$
does not make sense in our context
in general.}

Let $A$ be some (possibly non-unital) differential graded commutative algebra.
(In our result, we take for $A$ the augmentation ideal of a Sullivan model of the pointed space $M_*$.)
We denote by $\HGC_{A,n}$ the complex of $\Q$-linear series of graphs with ``hairs'' (external legs),
each of which decorated by an element of our algebra,
as in the following example:
\[
\begin{tikzpicture}[scale=.7,baseline=-.65ex]
\node[int] (v1) at (-1,0){};
\node[int] (v2) at (0,1){};
\node[int] (v3) at (1,0){};
\node[int] (v4) at (0,-1){};
\node (w1) at (-2,0) {$a_1$};
\node (w2) at (2,0) {$a_2$};
\node (w3) at (0,-2) {$a_3$};
\draw (v1)  edge (v2) edge (v4) edge (w1) (v2) edge (v4) (v3) edge (v2) edge (v4) (v4) edge (w3) (v3) edge (w2);
\end{tikzpicture}
\,,\quad\quad
a_1,a_2,a_3\in A.
\]
We assume that the graphs of this complex $\HGC_{A,n}$ are connected and have internal vertices of valence greater than or equal to three.
Multiple edges and tadpoles (edges connecting a vertex to itself) are allowed.
We also assume that the decoration of the hairs is $\Q$-multilinear in $A$.

We equip this decorated hairy graph complex $\HGC_{A,n}$ with a homological grading (as opposed to the differential graded algebra $A$,
which we equip with a cohomological grading, as usual in the Sullivan rational homotopy theory).
To be explicit, we endow a graph $\Gamma\in\HGC_{A,n}$ with $\# E$ edges, $\# V$ internal vertices,
and with hairs labelled by homogeneous elements $a_1,\ldots,a_k\in A$,
with the homological degree
\[
\deg(\Gamma) = (n-1)\# E-n\# V -\sum_{i=1}^k |a_i|,
\]
where $|a_i|$ denotes the cohomological degree of the elements $a_i$ in $A$.

We endow the decorated hairy graph complex with a differential graded $L_\infty$-algebra structure, which we briefly sketch for the moment,
referring to sections~\ref{sec:HGC} and~\ref{sec:HGCLie} for more details.
The differential $\delta: \HGC_{A,n}\rightarrow\HGC_{A,n}$ consists of three pieces
\begin{equation}\label{equ:delta}
\delta  = d_A+\delta_{split} + \delta_{join}.
\end{equation}
The piece $d_A$ is induced by the internal differential of the differential graded commutative algebra $A$.
The piece $\delta_{split}$ is given by the sum of the following splitting operation on internal vertices:
\begin{equation}\label{equ:deltasplit}
\begin{tikzpicture}[baseline=-.65ex]
\node[int] (v) at (0,0) {};
\draw (v) edge +(-.3,-.3)  edge +(-.3,0) edge +(-.3,.3) edge +(.3,-.3)  edge +(.3,0) edge +(.3,.3);
\end{tikzpicture}
\mapsto\sum\begin{tikzpicture}[baseline=-.65ex]
\node[int] (v) at (0,0) {};
\node[int] (w) at (0.5,0) {};
\draw (v) edge (w) (v) edge +(-.3,-.3)  edge +(-.3,0) edge +(-.3,.3)
 (w) edge +(.3,-.3)  edge +(.3,0) edge +(.3,.3);
\end{tikzpicture},
\quad\text{so that}
\quad\delta_{split}\Gamma = \sum_{v\,\text{vertex}} \pm\Gamma\,\text{split}\,v.
\end{equation}
The piece $\delta_{join}$ is defined by the sum of the operations that consists of joining a subset of the hairs of our graph
into one hair and multiplying the corresponding decorations in $A$:
\begin{align}\label{equ:deltajoin}
\delta_{join}
\begin{tikzpicture}[baseline=-.8ex]
\node[draw,circle] (v) at (0,.3) {$\Gamma$};
\node (w1) at (-.7,-.5) {$a_1$};
\node (w2) at (-.25,-.5) {$a_2$};
\node (w3) at (.25,-.5) {$\dots$};
\node (w4) at (.7,-.5) {$a_k$};
\draw (v) edge (w1) edge (w2) edge (w3) edge (w4);
\end{tikzpicture}
= \sum_{\substack{S\subset {\rm hairs} \\ |S|\geq 2 }} \pm
\begin{tikzpicture}[baseline=-.8ex]
\node[draw,circle] (v) at (0,.3) {$\Gamma$};
\node (w1) at (-.7,-.5) {$a_1$};
\node (w2) at (-.25,-.5) {$\dots$};
\node[int] (i) at (.4,-.5) {};
\node (w4) at (.4,-1.3) {$\scriptstyle \prod_{j\in S}a_j$};
\draw (v) edge (w1) edge (w2) edge[bend left] (i) edge (i) edge[bend right] (i) (w4) edge (i);
\end{tikzpicture}\,.
\end{align}
The higher $L_\infty$-operations are defined similarly to~$\delta_{join}$. In brief, the $r$th $L_\infty$-operation $\ell_r(\Gamma_1,\dots,\Gamma_r)$, $r\geq 2$,
is the sum of the graphs that we obtain by joining hair subsets of the graphs $\Gamma_1,\dots,\Gamma_r$
all together and by multiplying the corresponding decorations.
For example, the Lie bracket has the following schematic description:
\beq{equ:bracketpic}
\left[\begin{tikzpicture}[baseline=-.8ex]
\node[draw,circle] (v) at (0,.3) {$\Gamma$};
\draw (v) edge +(-.5,-.7) edge +(-.25,-.7) edge +(0,-.7) edge +(.25,-.7) edge +(.5,-.7);
\end{tikzpicture},
\begin{tikzpicture}[baseline=-.65ex]
\node[draw,circle] (v) at (0,.3) {$\Gamma'$};
\draw (v) edge +(-.5,-.7) edge +(-.25,-.7) edge +(0,-.7) edge +(.25,-.7) edge +(.5,-.7);
\end{tikzpicture}
\right]
= \sum\begin{tikzpicture}[baseline=-.8ex]
\node[draw,circle] (v) at (0,.3) {$\Gamma$};
\node[int] (i) at (.5,-.5) {};
\draw (v) edge +(-.5,-.7) edge +(0,-.7) edge (i) edge[bend left] (i) edge[bend right] (i);
\node[draw,circle] (vv) at (1,.3) {$\Gamma'$};
\draw (vv) edge (i) edge[bend left] (i) edge[bend right] (i) edge +(0,-.7) edge +(.5,-.7) (i) edge (.5,-1);
\end{tikzpicture}\,,
\eeq
where we suppress the $A$ decorations on hairs for simplicity. (They are multiplied whenever hairs are joined.)
Note that, by convention, this Lie bracket has degree $-1$ (just as the Whitehead product in homotopy theory)
and we adopt a similar convention for the higher $L_{\infty}$-operations. Thus, in comparison to the standard grading convention
for $L_{\infty}$-algebra structures, we shift the degree of the $L_{\infty}$-operations
by one (see Section~\ref{ss:gen}).

Our main result is the following.

\begin{thm}\label{thm:main1}
Let $M\subset\R^m$ be a complement to a compact submanifold (possibly with boundary) and let $R$ be a Sullivan model of the pointed space $M_* = M\cup\{\infty\}$ (with the base-point at infinity).
We assume that $R$ is equipped with an augmentation (corresponding to the base point).
For $n-m\geq 2$, we have a weak homotopy equivalence
\[
 \IBimod^h_{\FM_m}(\IF_M,\FM_n^\Q)\simeq\MC_\bullet(\HGC_{\bar R,n}),
\]
where $A = \bar R$ denotes the augmentation ideal of our differential graded commutative algebra $R$
and $\MC_\bullet(\HGC_{\bar R,n})$ denotes the simplicial set of Maurer--Cartan forms
with values in the complete $L_\infty$-algebra $\HGC_{\bar R,n}$.\footnote{We recall the explicit definition of a Maurer--Cartan element in a complete $L_\infty$-algebra
and the definition of this simplicial set of Maurer--Cartan forms
in Section~\ref{ss:MC}.}
\end{thm}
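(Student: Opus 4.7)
The plan is to translate the derived mapping space into a Maurer--Cartan problem on a convolution-type $L_\infty$-algebra, and then to identify this convolution algebra with the hairy graph complex $\HGC_{\bar R,n}$ together with its prescribed differential and higher brackets.

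First I would replace both sides of the mapping space by convenient graph-complex models. On the target side, I take Kontsevich's rational $\Graphs_n$ model as a Hopf operad weakly equivalent to $\FM_n^\Q$. On the source side, the infinitesimal $\FM_m$-bimodule $\IF_M$ must be replaced by a graph-complex model $\Graphs_M$ whose arity-$r$ component consists of graphs with external vertices labelled $\{1,\dots,r\}$, internal vertices decorated by the Sullivan model $R$ of $M_*$, and an augmentation $R \to \Q$ that records the basepoint/infinity behaviour. The existence and correct homotopy type of such a model rests on the real/rational formality results for configuration spaces of points in a manifold (Campos--Willwacher, Idrissi, and their operadic extensions), adapted to the complement-of-a-submanifold setting via the pointed space $M_* = M \cup \{\infty\}$; on the operadic side, the equivalence~\eqref{equ:AT} allows us to import the models directly.

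Given a cofibrant resolution of $\Graphs_M$ as an infinitesimal $\FM_m$-bimodule (for instance, a free infinitesimal bimodule on suitable connected graph generators), the derived mapping space
\[
\IBimod^h_{\FM_m}(\IF_M,\FM_n^\Q)
\]
is computed by the simplicial set of $L_\infty$-morphisms from the cooperadic cofibre of the resolution into $\Graphs_n$. By the standard convolution-algebra formalism of operadic deformation theory (in the form used in earlier Arone--Turchin and Fresse--Willwacher work on the disc operads), this simplicial set is $\MC_\bullet$ of an explicit complete $L_\infty$-algebra
\[
\mathfrak g \;=\; \mathrm{Conv}\bigl(\Graphs_M^c,\; \Graphs_n\bigr).
\]
Unwinding the definitions, an element of $\mathfrak g$ is a series of hairy graphs whose internal vertices come from $\Graphs_n$, whose hairs come from the external vertices of the source graphs, and whose hair decorations are forced to lie in the augmentation ideal $\bar R$ by the basepoint condition at infinity. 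The homological degree works out to $(n-1)\#E - n\#V - \sum_i|a_i|$, matching the grading of $\HGC_{\bar R,n}$.

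The remaining step is to identify $\mathfrak g$ with $\HGC_{\bar R,n}$ as $L_\infty$-algebras on the nose. At the level of the differential: $d_A$ corresponds to the internal differential on $R$; $\delta_{split}$ in~\eqref{equ:deltasplit} comes from the dualised cooperadic cocomposition on $\Graphs_n$ that expands one internal vertex into two connected by an edge; and $\delta_{join}$ in~\eqref{equ:deltajoin} together with the higher brackets $\ell_r$ arise from the infinitesimal bimodule coactions of $\Graphs_M^c$, which encode how several external legs of the source can be simultaneously inserted into a single internal vertex of the target, multiplying the corresponding $R$-decorations by the commutative product of $R$. I expect the main obstacle to lie in two places: first, producing the graph model $\Graphs_M$ for $\IF_M$ in the required generality (complement of an arbitrary compact submanifold, possibly with boundary, possibly long) together with a cofibrant presentation amenable to the convolution-algebra recipe; and second, checking that the higher $L_\infty$-brackets coming from convolution agree on the nose with the combinatorial joining operations described after~\eqref{equ:bracketpic}, carefully tracking signs, the homological versus cohomological grading conventions, and the degree shift by one in the $L_\infty$-operations relative to the standard convention recalled in Section~\ref{ss:gen}.
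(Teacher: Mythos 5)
Your proposal has a genuine gap: it never invokes the relative formality of the map $\FM_m\to\FM_n^{\Q}$, and without it the reduction to $\HGC_{\bar R,n}$ does not go through. The paper's proof rests on the diagram~\eqref{equ:formality diag} of Section~\ref{ss:relative}, which shows that the composite $\FM_m\to\FM_n^{\Q}$ is, up to zigzag, the constant map through the commutative operad $\Com = *$. This is what permits replacing the target $\FM_n^{\Q}$ by $H_n=LG(H^*(\FM_n))$ viewed as an infinitesimal $\Com$-bimodule, and then applying the induction--restriction adjunction of Proposition~\ref{prop:resind} to move the whole mapping space into infinitesimal $\Com$-bimodules, i.e.\ right $\Gamma$-modules. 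At that point the source $\IF_M$ collapses to the elementary object $M_*^{\times\bullet}$ (Proposition~\ref{prop:indEM}), so no graph-complex model of $\IF_M$ is ever required. You instead propose to build a decorated graph model $\Graphs_M$ of $\IF_M$ as an infinitesimal $\FM_m$-bimodule; this is not done in the paper and is a much harder problem: the Campos--Willwacher/Idrissi-type graph models you cite describe configuration spaces as right modules or right operadic modules over disc operads, not as infinitesimal bimodules with the full pearl/left-action structure, and extending them to the complement-of-a-submanifold setting with the correct bimodule compatibilities is a nontrivial project in its own right.

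A second, related difficulty is that the convolution algebra $\mathrm{Conv}(\Graphs_M^c,\Graphs_n)$ you write down would not coincide with $\HGC_{\bar R,n}$ unless the source model $\Graphs_M$ has essentially no internal graph structure. If $\Graphs_M$ carries genuine internal vertices/edges (as any honest graph model of the Fulton--MacPherson compactification of $C_*(\bullet,M_*)$ must), then the convolution contains graphs with two species of internal vertices --- those contributed by the source and those contributed by the target --- together with the $\FM_m$-equivariance constraint, and one would have to contract away the source-side internal structure by a separate argument. That contraction is exactly what the induction along $\FM_m\to\Com$ achieves in the paper, so your route would end up reproducing the relative formality step anyway, but in a more cumbersome form. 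By contrast, the paper's argument never puts any graph structure on the source: after dualizing $M_*^{\times\bullet}$ to the Hopf $\Gamma$-module $R^{\otimes\bullet}$ via the Sullivan adjunction (Section~\ref{sec:gamma_rational_homotopy}), the graphs appear only on the target through the cofibrant Hopf $\Gamma$-module model $\Graphs_n$ (Proposition~\ref{prop:graphscofibrant}), and the identification of the mapping space with $\MC_\bullet(\HGC_{\bar R,n})$ is Proposition~\ref{prop:HGCLinfty} combined with the simplicial frame of Lemma~\ref{lem:simplframe}. You correctly anticipate that matching the $L_\infty$-structure with the combinatorial joins is delicate, but that verification happens in a different place than you expect --- in the proof of Proposition~\ref{prop:HGCLinfty} as an identification of Hopf $\Gamma$-module morphisms with Maurer--Cartan elements, not as a convolution of two graph models.
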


To relate our computation to the right-hand side of \eqref{equ:AT} one may invoke the following result.

\begin{thm}\label{thm:FM}
Let $M$ be a complement to a compact submanifold in $\R^m$. For $n-m\geq 3$
or $n-m\geq 2$ and $M\subsetneq\R^m$,
the natural map
\beq{equ:FM}
\IBimod^h_{\FM_m}(\IF_M,\FM_n)\to\IBimod^h_{\FM_m}(\IF_M,\FM_n^\Q)
\eeq
defines a rational equivalence of nilpotent spaces componentwise
and is finite-to-one at the $\pi_0$-level.
\end{thm}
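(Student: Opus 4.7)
The plan is to analyze the map \eqref{equ:FM} through the Taylor-tower filtration of the infinitesimal bimodule mapping space, stage by stage, and then to pass to the homotopy limit.

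For the first step I would present $\IBimod^h_{\FM_m}(\IF_M,\FM_n)$ as the homotopy inverse limit of a tower of polynomial stages
\[
T_k := \IBimod^h_{\FM_m^{\leq k}}(\IF_M^{\leq k},\FM_n^{\leq k}),
\]
obtained by truncating the infinitesimal bimodules to arity at most $k$; this is the tower underlying the Goodwillie--Weiss manifold calculus in the form recalled in \cite{Turchin2, Turchin4}. An analogous tower $T_k^\Q$ exists with $\FM_n$ replaced by $\FM_n^\Q$, and the map \eqref{equ:FM} is the homotopy inverse limit of a levelwise map $T_k\to T_k^\Q$.

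For the second step I would prove by induction on $k$ that each $T_k$ is nilpotent of finite $\Q$-type on every connected component, that $\pi_0 T_k$ is a finitely generated abelian group, and that $T_k\to T_k^\Q$ is a componentwise rationalization. The induction uses the homotopy fibre sequence $L_k\to T_k\to T_{k-1}$, whose layer $L_k$ is a section space with fibre built from $\FM_n(k)$. Since $\FM_n(k)$ is simply connected and of finite rational type with rationalization $\FM_n^\Q(k)$, the layer $L_k$ inherits these properties, and the long exact sequence on homotopy groups together with a five-lemma argument transports them from $(L_k, T_{k-1})$ to $T_k$. At this stage the finite-to-one property on $\pi_0 T_k$ is automatic, since rationalization of a finitely generated abelian group has kernel equal to its torsion subgroup.

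The main obstacle is to commute the rationalization functor with the homotopy inverse limit $\holim_k$. This requires not just the nilpotency and finite type of each stage, but also a Mittag-Leffler or high-connectivity condition for the tower so that a Bousfield--Kan type comparison applies and all $\lim{}^1$ terms vanish after rationalization. The required connectivity holds in the codimension range of the theorem because the connectivity of the layers $L_k$ grows linearly in $k$ when $n-m\geq 3$, or when $n-m\geq 2$ with $M\subsetneq\R^m$, which is precisely the input of the Goodwillie--Klein--Weiss convergence theorem \cite{GK,GKW,GW} already invoked for \eqref{equ:AT}. The finite-to-one property on $\pi_0$ of the full limit space then follows from the Milnor $\lim{}^1$ exact sequence, combined with the finite generation of the groups $\pi_0 T_k$ and the fact that rationalization is finite-to-one on finitely generated abelian groups.
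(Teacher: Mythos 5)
Your top-level decomposition matches the paper's: truncating the target by arity is exactly the coskeletal filtration $\op P\mapsto\cosk^{\Lambda}_s\op P$ of the infinitesimal bimodule $\FM_n$, so your tower $\{T_k\}$ coincides with the tower used in Section~\ref{ss:proof_thm_FM}. The genuine gap is in the inductive step that is supposed to establish nilpotency. The long exact sequence of $L_k\to T_k\to T_{k-1}$ together with a five-lemma does not transport nilpotency: an extension of a nilpotent group by a nilpotent group need not be nilpotent, and even when $\pi_1(T_k)$ is nilpotent one still has to check that its action on every $\pi_j(L_k)$ is nilpotent. This difficulty is precisely why the paper, following Haefliger's method for section spaces of nilpotent fibrations and Mienn\'e's extension to (infinitesimal bimodules over) operads, refines each coskeletal step by a further \emph{Postnikov} decomposition. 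After that refinement the elementary steps are \emph{principal} fibrations whose fibers are derived mapping spaces into Eilenberg--MacLane infinitesimal bimodules $K(N\pi_t\FM_n(s),t)$, with homotopy identified as the equivariant cohomology groups $\tilde H^{t-i}_{\Sigma_s}(L\Indec(\IF_M)(s),\pi(s))$; for principal fibrations nilpotency, rationalization, and finiteness do propagate through the tower, and this is what your argument is missing.

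Two secondary remarks. First, the appeal to Goodwillie--Klein--Weiss convergence is not the right input here: what closes the double (arity and Postnikov) induction and permits the passage to the limit are explicit numerical bounds, namely a dimension bound $m(s)$ for $\Indec(\IF_M)(s)=M_*^{\wedge s}/\Delta^s M_*$, a connectivity bound $n(s)=(n-2)(s-1)$ for the matching map $\FM_n(s)\to M\FM_n(s)$, and the divergence $n(s)-m(s)\to\infty$; this is where the hypotheses $n-m\geq 3$, or $n-m\geq 2$ with $M\subsetneq\R^m$, enter quantitatively, and it also yields the Mittag--Leffler control you were after. Second, you treat $\pi_0 T_k$ as a finitely generated abelian group, but $\pi_0$ of these mapping spaces is in general only a pointed set; the finite-to-one statement has to be extracted instead from the finite generation of the obstruction and classification groups arising at each Postnikov stage, which is how the paper proceeds.
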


From Theorem \ref{thm:main1} and equivalence \eqref{equ:AT}, it follows that one can compute the rational homotopy type of the connected components $\Embbar_\p(M,\R^n)_\psi$
of the embedding spaces $\Embbar_\p(M,\R^n)$
through our hairy graph complexes.

\begin{cor}\label{cor_main}
Under the assumptions of Theorem \ref{thm:FM}, we have the following statements:
\begin{enumerate}
\item\label{cor_main:nilpotence}
Every component of $\Embbar_\p(M,\R^n)$ is nilpotent.
\item\label{cor_main:connected_components}
We have a naturally defined finite-to-one map from the set of connected components of the space $\Embbar_\p(M,\R^n)$
to the set of Maurer--Cartan elements of the $L_{\infty}$-algebra $\HGC_{\bar R,n}$
modulo gauge equivalence:
\beq{equ:MC}
m: \pi_0\Embbar_\p(M,\R^n)\to\MC(\HGC_{\bar R,n})/\sim.
\eeq
\item\label{cor_main:Quillen_model}
The model of the rational homotopy type of a connected component $\Embbar_\p(M,\R^n)_\psi$
is given by the positive degree truncation
of the twisted $L_{\infty}$-algebra $\HGC_{\bar R,n}^{m(\psi)}$,
which we associate to the Maurer--Cartan element $m(\psi)\in\MC(\HGC_{\bar R,n})$
corresponding to $\psi\in\Embbar_\p(M,\R^n)$.
\item\label{cor_main:unknot}
The Maurer--Cartan element that corresponds to the trivial embedding $i: M\hookrightarrow\R^n$ is $m(i)=0$.
\end{enumerate}
\end{cor}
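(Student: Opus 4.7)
The plan is to deduce all four statements by chaining together the equivalence \eqref{equ:AT}, Theorem \ref{thm:FM}, and Theorem \ref{thm:main1}, and then invoking standard facts from the Sullivan--Hinich--Getzler deformation theory of $L_\infty$-algebras. Each substitution converts the problem into one about $\MC_\bullet(\HGC_{\bar R,n})$, after which the structural conclusions are essentially formal.

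First I would use \eqref{equ:AT} to replace $\Embbar_\p(M,\R^n)$ by $\IBimod^h_{\FM_m}(\IF_M,\FM_n)$ throughout; part \ref{cor_main:nilpotence} is then immediate because Theorem \ref{thm:FM} asserts that the rationalization map is a componentwise equivalence of nilpotent spaces, and nilpotence is inherited by the source. For part \ref{cor_main:connected_components} I would define $m$ by applying $\pi_0$ to the rationalization map of \eqref{equ:FM} (finite-to-one by Theorem \ref{thm:FM}) and then composing with $\pi_0$ of the equivalence of Theorem \ref{thm:main1}; the conclusion then rests on the classical identification $\pi_0\MC_\bullet(\fg)=\MC(\fg)/\sim$ for a complete $L_\infty$-algebra $\fg$, where $\sim$ denotes gauge equivalence (recalled in Section~\ref{ss:MC}).

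For part \ref{cor_main:Quillen_model}, fix $\psi$ with image $m(\psi)\in\MC(\HGC_{\bar R,n})$. By Theorem \ref{thm:FM} the rationalization of $\Embbar_\p(M,\R^n)_\psi$ is equivalent to the component of $\IBimod^h_{\FM_m}(\IF_M,\FM_n^\Q)$ containing the image of $\psi$, which via Theorem \ref{thm:main1} is precisely the component $\MC_\bullet(\HGC_{\bar R,n})_{m(\psi)}$. I would then invoke the standard twisting principle: the component of $\MC_\bullet(\fg)$ at a Maurer--Cartan element $\mu$ is weakly equivalent to $\MC_\bullet(\fg^\mu)$, where $\fg^\mu$ is the $L_\infty$-algebra obtained by twisting the structure maps by $\mu$, and the component of $\MC_\bullet(\fg^\mu)$ at $0$ is rationally modeled by the positive-degree truncation of $\fg^\mu$. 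Applied to $\fg = \HGC_{\bar R,n}$ this yields exactly the claimed $L_\infty$-model.

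For part \ref{cor_main:unknot}, the trivial embedding $i$ is the base-point used to define the homotopy fiber in \eqref{equ:hofib}, is transported through \eqref{equ:AT} to the canonical infinitesimal bimodule morphism induced by $i$, and serves as the natural base-point in building the equivalence of Theorem \ref{thm:main1}. The main obstacle, and the only part requiring genuine verification, will be to confirm that this transported base-point coincides with $0\in\MC_\bullet(\HGC_{\bar R,n})$: this should follow by chasing base-points through the construction of Theorem \ref{thm:main1}, observing that the trivial morphism on the geometric side corresponds to the empty (undeformed) graph configuration on the algebraic side, and hence to the zero Maurer--Cartan element.
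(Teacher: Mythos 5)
Your proposal is correct and follows the same route the paper takes: chain \eqref{equ:AT}, Theorem \ref{thm:FM}, and Theorem \ref{thm:main1}, then appeal to the standard $L_\infty$-deformation-theoretic facts ($\pi_0\MC_\bullet(\fg) = \MC(\fg)/\sim$, the twisting principle, and the positive-degree truncation giving a Quillen-type model as in Berglund's \cite[Theorem~5.5 and Corollary~1.3]{Be}). The only place you leave a to-do is part (\ref{cor_main:unknot}), where you say the base-point chase ``should follow'' from the observation that the trivial morphism corresponds to the undeformed graph configuration; the paper makes this precise by noting that the trivial embedding is carried, under \eqref{equ:AT} and \eqref{equ:FM}, to the composite $\IF_M\to\FM_m\xrightarrow{*}\FM_n^\Q$ that factors through the commutative operad $\Com = *$, and hence — after the reductions in the proof of Theorem \ref{thm:main1} — to the Hopf $\Gamma$-module map $\Graphs_n\to\Com^c\to R^{\otimes\bullet}$ killing every internally connected graph, which is exactly the zero Maurer--Cartan element under Proposition \ref{prop:HGCLinfty}. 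That single observation is the concrete content behind your ``empty graph configuration'' phrasing, so your plan is right; you would just need to make that factorization explicit to complete the argument.
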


Throughout the paper, the \emph{positive degree truncation} of an $L_\infty$-algebra $\mathfrak{g}$
refers to the $L_\infty$-subalgebra $\mathfrak{g}_{>0}\subset\mathfrak{g}$
which agrees with $\mathfrak{g}_k$ in degree $k\geq 2$,
is given by the kernel $\ker(\mathfrak{g}_1\xrightarrow{d}\mathfrak{g}_0)$ in degree $k = 1$,
and vanishes in degree $k\leq 0$.

The third statement of the corollary (\ref{cor_main:Quillen_model}) implies the identity
\[
\pi_k^\Q\Embbar_\p(M,\R^n)_\psi\cong H_k(\HGC_{\bar R,n}^{m(\psi)}),
\]
for all $k\geq 1$,
where $\pi_k^\Q$ denotes the rationalization of the abelian group $\pi_k$
for $k\geq 2$,
and the Malcev completion of the nilpotent group $\pi_1$
for $k=1$ (see~\cite[Appendix A.3]{Quillen}, see also \cite[Chapter I.8]{FrI} for a general introduction to the Malcev completion of groups).
The product on $\pi_1^\Q = H_1(\HGC_{\bar R,n}^{m(\psi)})$ is given by the Baker-Campbell-Hausdorff formula (see for example~\cite[Theorem~1.1]{Be}).
The action of $\pi_1^\Q$ on $\pi_k^\Q$, $k\geq 2$, is expressed as the exponent of the adjoint action of $H_1(\HGC_{\bar R,n}^{m(\psi)})$
on $H_k(\HGC_{\bar R,n}^{m(\psi)})$ (see \cite[Section~12.5.1]{BFMT}).
The rational homology groups of $\Embbar_\p(M,\R^n)_\psi$ can also be computed as the Chevalley--Eilenberg homology
of the $L_{\infty}$-algebra $(\HGC_{\bar R,n}^{m(\psi)})_{>0}$ (see~\cite[Corollary~1.3]{Be}, \cite{Getzler}, Section~\ref{s:Sullivan}).
The last statement of the corollary (\ref{cor_main:unknot})
follows from the fact that the component of the trivial embedding
under the composition
of~\eqref{equ:AT} and~\eqref{equ:FM} is sent to the component of the map $\IF_M\to\FM_m\xrightarrow{*}\FM^\Q_n$
that factors through the commutative operad in sets $\Com$ (see Section~\ref{ss:relative}).
Recall that $\Com$ is given by the base-point in each arity $\Com(r) = *$,
so that we may write $\Com = *$ for this operad
in what follows.

Theorem~\ref{thm:FM} follows from an application of Mienn\'e's theory of Postnikov decompositions of operads
and of bimodules over operads (see~\cite{MienneThesis,MienneMemoir}).
This theory enables one to adapt Haefliger's proof that the rationalization of the space of sections
of a nilpotent fibration can be computed by a model (see~\cite{Haefliger3,Haefliger4}).
We also refer to~\cite{Sullivan} for the claim that the rationalization induces a finite-to-one correspondence
on the sets of homotopy classes of maps with values in a nilpotent space.
The proof of these counterparts of the claims of Theorem~\ref{thm:FM}
in the context of spaces can be obtained by using a Postnikov decomposition of the target object
of our mapping spaces.
In the context of operads, we actually need to consider a decomposition by arity in addition to the decomposition by Postnikov sections.
To ensure the convergence of this double decomposition, we need to prove that the source object of our mapping space
is equivalent to a cell complex of free infinitesimal bimodules
of bounded dimension arity-wise,
and that the fibers of the arity decomposition of the target object have a connectivity $n(r)$ that tends to $\infty$
faster than the dimension bounds of the cells
of the source object.
We basically check that these assumptions are fulfilled by the infinitesimal bimodules $\IF_M$ and $\FM_n$
in order to get the conclusion of Theorem~\ref{thm:FM}
for our mapping spaces.
We explain this verification in detail in Section~\ref{ss:proof_thm_FM}.


The results outlined in this introduction enable us to define a map
\[
\pi_0\Embbar_\p(M,\R^n)\cong\pi_0\IBimod^h_{\FM_m}(\IF_M,\FM_n)\to\pi_0\MC_\bullet(\HGC_{\bar R,n}) = \MC(\HGC_{\bar R,n})/{\sim},
\]
and hence, an invariant of ``$M$-knots''. We know that this map is finite-to-one. However, we do not know yet how this map can be efficiently computed.
It is likely that it can be expressed in terms of Bott-Taubes-Kontsevich type configuration space integrals.
It is also possible that it can be expressed more simply in terms of the rational homotopy type of the complement of the embedding $\R^n\setminus\psi(M)$
and maybe in addition taking into account a chain version of the Alexander duality.

\subsection{Range improvement}\label{ss:range1}
We reiterate that even though Theorem~\ref{thm:main1} and Corollary~\ref{cor_main} are stated for a manifold $M\subset\R^m$, which is a complement
to a compact submanifold, because of the equivalences~\eqref{eq:L_NL} and \eqref{eq:L_NL3}, Corollary~\ref{cor_main} applies to any compact manifold
$L$ (with components of possibly different dimensions) embeddable in $\R^m$.
For $\bar R$ in $\HGC_{\bar R,n}$, we take a Sullivan model of~$L$. Similarly, because of equivalence~\eqref{eq:L_NL2}, Corollary~\ref{cor_main} can be applied to a closed submanifold $L\subset\R^m$, which near infinity looks like a finite disjoint
union of affine subspaces.
In the latter case for $R$ we take an augmented Sullivan model of the one-point compactification $L_*=L\cup\{\infty\}$ of $L$,
considered as pointed at~$\infty$.

The range when Corollary~\ref{cor_main} applies can be slightly improved. Namely, the manifold $L$ does not need to be embeddable in~$\R^m$, it is enough
if it admits an immersion $i\colon L\looparrowright\R^m$. In case $L$ is not compact, $i$ is supposed to be proper, injective outside a compact subset,
and to have as image near infinity a disjoint union of affine subspaces. We can then similarly define spaces $\Embbar(L,\R^n)$ and $\Embbar_\p(L,\R^n)$.
Define $N(L)$ as the normal disc bundle over $L$, and $N_\infty(L)=:M$ as $N(L)$ union $\R^m\setminus D^m_R$ the complement to a closed disc of some big radius $R$.
The immersion $i$ can be extended to an immersion $N(L)\looparrowright\R^m$, respectively $N_\infty(L)\looparrowright\R^m$.
Then in this more general situation, the equivalences (\ref{eq:L_NL}-\ref{eq:L_NL3}) still hold.

 For the generalized Theorem~\ref{thm:main1} and Corollary~\ref{cor_main}, the manifold $M$ can be taken as follows. First let $|M_*|$ be any compact pointed $m$-manifold with a basepoint~$*$ in
its interior $M_*\subset |M_*|$. Let also
\[
i\colon |M_*|\looparrowright S^m=\R^m\cup\{\infty\},
\]
be an immersion such that $i^{-1}(\infty)=*$. The manifold $M$ is then defined as $M:=M_*\setminus \{*\}$. We  consider
the spaces $\Emb_\p(M,\R^n)$, $\Imm_\p(M,\R^n)$, $\Embbar_\p(M,\R^n)$ of embeddings, immersions, and embeddings modulo immersions for which
the corresponding maps $M\to\R^n$ coincide with $M\xrightarrow{i}\R^m\subset\R^n$ near~$*$.

Corollary~\ref{cor_main}(\ref{cor_main:nilpotence}-\ref{cor_main:Quillen_model}) for a manifold $M$ as above can be proved by exactly the same arguments, provided $n-m\geq 3$ or $n-m\geq 2$ and
$M_*$ has no component $S^m$. We need the codimension-two-requirement because in the proof we use the relative formality of the little discs operads (see Subection~\ref{ss:relative}).
Besides, we need to make sure that the ambient dimension $n$ minus the handle dimension of $M_*$ is greater than or equal to three,
because this assumption is necessary for the convergence of the Goodwillie-Weiss tower~\cite{GK,GKW,GW},
and for this reason, we do not allow the manifold $M$ to have a closed
component in the case where the codimension is two\footnote{Such a component must be $S^m$, as an immersion of a closed $m$-manifold in an $m$-manifold is always a covering map.}.
In Section~\ref{ss:range2} we explain how the main steps in the proof need to be adjusted for this more general situation.

\subsection{Plan of the paper}
The paper is organized as follows.
In Section \ref{sec:preliminaries}, we set our notation for later use and we briefly recall constructions of the literature that we use throughout the paper.
More specifically, we review Pirashvili's Dold--Kan theory of $\Gamma$-modules, which we use in our study of our mapping spaces of infinitesimal bimodules over operads,
and we review relative formality results for the little discs operads, which we use to reduce our mapping spaces to hairy graph complexes
in the rational homotopy theory setting.
We also briefly review the definition of the Fulton--MacPherson operad $\FM_n$ and of the infinitesimal bimodule $\IF_M$
in this section.
We introduce the hairy graph complexes $\HGC_{A,n}$ in Section~\ref{s:hopf}
and we complete the proof of our main theorems afterwards in Section~\ref{sec:the proofs}.
We eventually address some examples and applications in Section~\ref{s:examples} where we also compare our results with previous work of the literature.

\subsection*{Acknowledgement} The authors thank G.~Arone, U.~Buijs, Y.~F\'elix, P.~Lambrechts, A.~Skopenkov, D.~Stanley and D.~Tanr\'e for communication.

\section{Preliminaries}\label{sec:preliminaries}

\subsection{Generalities}\label{ss:gen}
We generally work over the ground field $\Q$ (i.e., all vector spaces, algebras, etc. will be defined over $\Q$).
The phrase \emph{differential graded} will be abbreviated by \emph{dg}.
By a dg vector space we usually mean a cochain complex, which can also be considered as a chain complex by inversing the grading.

In general, we apply grading conventions inspired by topological applications.
For instance, we apply cohomological conventions for dg commutative algebras as usual in rational homotopy theory (in particular, we assume that the differential of a dg commutative algebra
increases degrees by $1$), but we adopt homological conventions
for dg Lie algebras and $L_\infty$-algebras (thus, we consider a differential that decreases degrees by $1$
in the case of a dg Lie algebra).
In addition, we assume that the bracket of a dg Lie algebra has degree $-1$ (like the Whitehead product in homotopy theory).
We similarly assume that all higher brackets operations of an $L_{\infty}$-algebra have degree $-1$.
With these grading conventions the Maurer--Cartan elements lie in degree~$0$.

We denote the category of non-negatively graded dg commutative algebras by $\DGCA$.
We equip it with the usual model structure, in which the weak equivalences are the quasi-isomorphisms and the fibrations are the maps that are surjective in all degrees.

In general, for a simplicial model category $\mC$, we use the name of the category to refer to the mapping space $\mC(A,B)$, which we can associate to any objects $A,B\in\mC$.
We also use the notation $\mC(A,B)$ when $\mC$ is not simplicial. (In this case, we assume that we have a canonical choice of simplicial frame or of cosimplicial frame.)
The derived mapping space, denoted by $\mC^h(A,B)$, is defined by taking $\mC^h(A,B) = \mC(Q_A,R_B)$ for a cofibrant replacement $Q_A$ of $A$
and a fibrant replacement $R_B$ of $B$.

We denote the morphism sets of a category by~$\Mor_\mC(\dots)$ in general, but we will simplify this notation for certain categories of diagrams in dg vector spaces
and in dg commutative algebras (see Section~\ref{ss:hopf_model_str}).

\subsection{Topological spaces and simplicial sets}\label{sec:Milnor_equivalence}
By $\TopCat$ we denote the category of compactly generated (possibly non-Hausdorff) topological spaces.
This category is cartesian closed, like the category of simplicial sets $\sset$,
and moreover we have a Quillen equivalence of model categories
that preserves the cartesian closed structure (see~\cite[Section~2.4]{Hov}\footnote{This category is denoted by $\textbf{K}$ in this reference.}):
\[
|\,.\,|: \sset\rightleftarrows\TopCat :S_\bullet,
\]
where we consider the geometric realization functor $|\,.\,|$ on the one hand
and the singular complex functor $S_\bullet$
on the other hand.
By a ``space'' we understand either an object in $\TopCat$ or a simplicial set.
Because of this equivalence, sometimes we will be sloppy and will not make a difference between these two categories.

\subsection{On $\Gamma$-modules and $\Omega$-modules}\label{sec:gammaomega}
Let $\Gamma$ be the category of finite pointed sets. We use the expression `right $\Gamma$-module' for the category of contravariant functors $T: \Gamma^{op}\to\mC$
with values in any category $\mC$,
whereas the expression `left $\Gamma$-module' refers to the category of covariant functors $T: \Gamma\to\mC$.
We also consider the category $\Omega$ of finite (non-pointed) sets with surjections as morphisms.
We then use the expression `right $\Omega$-module' for the category of contravariant functors $T: \Omega^{op}\to\mC$
and `left $\Omega$-module' for the category of covariant functors $T: \Omega\to\mC$.
We denote the category of left $\Gamma$-modules in $\mC$ by $\mC^\Gamma$, and the category of left $\Omega$-modules by $\mC^\Omega$.

Suppose now that $\op M$ is a left $\Gamma$-module in some abelian category $\mC$.
For $S_*=\{*\}\sqcup S$ a pointed set and $s\in S$, we define $\pi_{s,S_*}: S_*\to S_*\setminus\{s\}$
to be the map that sends $s$ to the base-point and all other elements to themselves.
Then we have a left $\Omega$-module $\cro\op M$, the cross-effect, such that $\cro\op M(S)\subset\op M(S_*)$
is the joint kernel of the maps $\op M(\pi_{s,S_*})$.
The following result is due to Pirashvili (see~\cite{Pirashvili}).

\begin{prop}[Pirashvili]\label{prop:crosseffect}
The cross-effect functor $\cro: \mC^\Gamma\to\mC^\Omega$ between left $\Gamma$-modules and left $\Omega$-modules in an abelian category $\mC$ is an isomorphism of categories.
In particular, for a pair of left $\Gamma$-modules $A,B\in\mC^\Gamma$, we have the identity:
\[
\Mor_{\mC^\Gamma}(A,B) = \Mor_{\mC^\Omega}(\cro A,\cro B).
\]
\end{prop}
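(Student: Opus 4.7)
The plan is to prove the theorem by constructing the inverse functor explicitly, based on the observation that any left $\Gamma$-module $M$ is canonically reconstructed from its cross-effects through a Möbius-type direct sum decomposition indexed by the subsets of $S$.

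First I would establish, for each finite pointed set $S_* = \{*\} \sqcup S$ and each left $\Gamma$-module $M \in \mC^\Gamma$, a canonical direct sum decomposition
\[
M(S_*) \cong \bigoplus_{T \subseteq S} \cro M(T).
\]
For a subset $T \subseteq S$ the map $q_T\colon S_* \to S_*$ that fixes $T \cup \{*\}$ pointwise and sends $S \setminus T$ to the basepoint is an idempotent in $\Gamma$, so $M(q_T)$ is an idempotent endomorphism of $M(S_*)$. Möbius inversion over the Boolean lattice then produces mutually orthogonal idempotents
\[
e_T = \sum_{U \subseteq T}(-1)^{|T \setminus U|}\, M(q_U),
\]
whose sum is the identity on $M(S_*)$. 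Factoring $q_T = i_T \circ r_T$ with $r_T\colon S_* \to T_*$ the collapse map and $i_T\colon T_* \hookrightarrow S_*$ the inclusion, one identifies the image of $e_T$ with $\cro M(T) \subseteq M(T_*)$ pushed into $M(S_*)$ by $M(i_T)$. Idempotents split in the abelian category $\mC$, so this yields the desired decomposition.

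Next I would analyze how a generic morphism $f\colon S_* \to S'_*$ of $\Gamma$ acts with respect to this decomposition. Writing $M(f)$ as a matrix of blocks $\cro M(T) \to \cro M(T')$, the key claim is that the $(T, T')$-block vanishes unless $f$ sends $T$ into $S'$ (avoiding the basepoint) and satisfies $f(T) = T'$, in which case the block coincides with $\cro M(f|_T)$ applied to the surjection $f|_T\colon T \twoheadrightarrow T'$. The vanishing when $T \cap f^{-1}(*) \neq \emptyset$ is immediate from the defining relations $M(\pi_{t, T_*})|_{\cro M(T)} = 0$ of the cross-effect, and the vanishing for $T' \subsetneq f(T)$ follows by expanding the projection $e_{T'}$ via its Möbius formula and invoking the same relations. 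With the component formula in hand the inverse functor $\widehat{(\cdot)}\colon \mC^\Omega \to \mC^\Gamma$ is defined by $\widehat{N}(S_*) := \bigoplus_{T \subseteq S} N(T)$ on objects, and for a morphism $f\colon S_* \to S'_*$ of $\Gamma$ by declaring the $(T, T')$-block of $\widehat{N}(f)$ to be $N(f|_T)$ whenever $f|_T$ is a basepoint-avoiding surjection onto $T'$, and zero otherwise. The natural isomorphism $\cro \widehat{N} \cong N$ is immediate from the construction, the isomorphism $\widehat{\cro M} \cong M$ follows from the first step, and the identity on morphism sets is then transported along these natural isomorphisms.

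The main obstacle will be the verification that $\widehat{N}$ is actually a functor on $\Gamma$, that is, $\widehat{N}(g) \circ \widehat{N}(f) = \widehat{N}(g \circ f)$ for composable morphisms $f\colon S_* \to S'_*$ and $g\colon S'_* \to S''_*$. Expanding both sides as sums over intermediate subsets $T' \subseteq S'$, this reduces to the observation that for each pair $(T, T'') \subseteq S \times S''$ the only intermediate subset that contributes is $T' = f(T)$ (provided both $f|_T$ and $g|_{T'}$ are basepoint-avoiding surjections), in which case the matching of components is the tautological equality $g|_{f(T)} \circ f|_T = (g \circ f)|_T$. This amounts to a careful combinatorial bookkeeping based on the canonical epi-mono factorization of morphisms in the pointed category $\Gamma$.
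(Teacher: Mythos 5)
The paper states this result as a citation to Pirashvili without supplying a proof, so there is no in-paper argument to compare against. Your proof is correct, and it is the standard argument: since $q_T\circ q_U = q_{T\cap U}$ and $M$ is covariant, the $M(q_T)$ form a commuting family of idempotents in $\End_{\mC}(M(S_*))$, the M\"obius-inverted $e_T$ are mutually orthogonal and sum to $M(q_S)=\mathrm{id}$, and the splitting $r_T\circ i_T=\mathrm{id}_{T_*}$ makes $M(i_T)$ split-mono, which lets one identify $\mathrm{im}(e_T)$ with $M(i_T)(\cro M(T))$. The functoriality check for the inverse is as you describe: for fixed $(T,T'')$, at most one intermediate $T'$ (namely $T'=f(T)$, provided $f(T)\subseteq S'$) contributes, and $g|_{f(T)}\circ f|_T = (g\circ f)|_T$.

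One small imprecision in the block analysis: you treat the vanishing of the $(T,T')$-component in two cases, $T\cap f^{-1}(*)\neq\emptyset$ and $T'\subsetneq f(T)$, but you also need to dispose of $T'\supsetneq f(T)$ and $T'$ incomparable to $f(T)$. The cleanest route is to note that once $f(T)\subseteq S'$, the composite $f\circ i_T$ factors as $i_{f(T)}\circ\bar f$ with $\bar f\colon T_*\to f(T)_*$ induced by the surjection $f|_T\colon T\twoheadrightarrow f(T)$, and $M(\bar f)$ carries $\cro M(T)$ into $\cro M(f(T))$; hence the image lands \emph{exactly} in the $f(T)$-summand, and all $T'\neq f(T)$ components vanish simultaneously by orthogonality of the $e_{T'}$. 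This replaces, and slightly tidies, the ad hoc argument you sketched for the $T'\subsetneq f(T)$ case, but it is an expository repair rather than a mathematical gap.
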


In what follows, we apply this statement in the case where $\mC$ is the category of dg vector spaces.
We also consider left $\Gamma$-modules in the category of dg commutative algebras (which is non abelian).
We use the terminology `left Hopf $\Gamma$-modules' for this category of left $\Gamma$-modules.
In general, if $\mC$ is a cofibrantly generated model category, then we can equip the category of left $\Gamma$-modules in $\mC$
with the projective model structure.
The weak equivalences in this model category $\mC^\Gamma$ are the objectwise weak equivalences and the fibrations are the objectwise fibrations.

\subsection{Right $\Gamma$-modules in topological spaces and in simplicial sets}\label{sec:gammaomega2}
For our purpose, we also consider the injective model structure on the category of right $\Gamma$-modules in simplicial sets $\sset^{\Gamma^{op}}$.
The weak equivalences of this model category are the objectwise weak equivalences again, while we take objectwise cofibrations as cofibrations.

The category $\sset^{\Gamma^{op}}$ can also be equipped with a Reedy model structure, defined in~\cite{BM2},
with the same class of weak-equivalences as the projective and injective model structures,
but where the class of cofibrations and the fibrations depends on the definition of latching maps
and of matching maps in the category of right $\Gamma$-modules.
This definition of a Reedy model structure on $\sset^{\Gamma^{op}}$ is a generalization
of the classical definition of the Reedy model category of diagrams
on a Reedy indexing category in the case where the indexing category, like $\Gamma^{op}$,
contains non trivial automorphisms (see also~\cite{BM2} for this subject).

The Reedy model structure on the category of right $\Gamma$-modules in simplicial sets
is Quillen equivalent to both the injective model structure
and the projective model structure (use that the identity functor carries the Reedy cofibrations to injective cofibrations,
the Reedy fibrations to projective fibrations,
and preserves all weak-equivalences).
This observation implies that we can use any of these model structures to compute the (derived) mapping spaces
of right $\Gamma$-modules $\sset^{\Gamma^{op},h}(\cdots)$.

We can also adapt the definition of the Reedy model structure for the category of right $\Gamma$-modules in topological spaces.
(We will explain in Section~\ref{sec:htpy theory ibimod} that this Reedy model structure is a particular case
of the Reedy model structures associated to the categories of operadic infinitesimal bimodules
that we use in this paper.)
The Quillen equivalence of Section~\ref{sec:Milnor_equivalence}, between topological spaces and simplicial sets,
extends to a Quillen equivalence between the Reedy model category of right $\Gamma$-modules in topological spaces
and the Reedy model category of right $\Gamma$-modules in simplicial sets:
\[
|\,.\,|: \sset^{\Gamma^{op}}\rightleftarrows\TopCat^{\Gamma^{op}} :S_\bullet,
\]
We can actually see that the functors $|\,.\,|$ and $S_\bullet$
preserve all weak-equivalences of right $\Gamma$-modules (not only the weak-equivalences
between cofibrant or fibrant objects),
because such an assertion holds in the model categories of simplicial sets and topological spaces.
We can again use this equivalence to pass from results on the homotopy of right $\Gamma$-modules in topological spaces
to results on the homotopy of right $\Gamma$-modules in simplicial sets.

\subsection{Rational homotopy theory of $\Gamma$-modules}\label{sec:gamma_rational_homotopy}
By standard results of rational homotopy theory (see \cite[Section 8]{BG}, see also \cite[Section II.7.2]{FrII}),
the Sullivan functor of piece-wise linear differential forms $\Omega^*$
and the adjoint Sullivan realization functor $G$
define a Quillen adjunction
\[
G: \DGCA\rightleftarrows\sset^{op} :\Omega^*.
\]
(Recall that $\DGCA$ denotes the category of non-negatively graded dg commutative algebras.)
The derived unit of this Quillen adjunction sends a space $X$ to its rationalization.
We explicitly have $X^\Q := LG(\Omega^*(X))$, for any $X\in\sset$,
where $LG$ denotes the derived functor of $G$.

In what follows, we also apply the Sullivan functor of piece-wise linear differential forms to topological spaces.
Then we set by an abuse of notation $\Omega^*(X) = \Omega^*(S_{\bullet}(X))$,
where we consider the image of our space $X$
under the singular complex functor $S_{\bullet}$ (see Section~\ref{sec:Milnor_equivalence}).
Note that the existence of the Quillen adjunction
implies that we have an equivalence of mapping spaces $\sset^h(X,LG(R))\simeq\DGCA^h(R,\Omega^*(X))$
when we consider the image of dg commutative algebra $R$
under the derived functor of the Sullivan realization functor $G$.
In particular, we have $\sset^h(X,Y^\Q)\simeq\DGCA^h(R,\Omega^*(X))$ when we consider the rationalization of a space $Y$,
for any choice of a dg commutative algebra $R$
quasi-isomorphic to $\Omega^*(Y)$. (Recall that we use the phrase ``Sullivan model of the space $Y$''
for any choice of such a dg commutative algebra $R$.)

By objectwise application of the functors $\Omega^*$ and $G$, we obtain an adjunction:
\beq{equ:GOmega2}
G: \DGCA^\Gamma\rightleftarrows(\sset^{\Gamma^{op}})^{op} :\Omega^*,
\eeq
between the category of right $\Gamma$-modules in simplicial sets $\sset^{\Gamma^{op}}$ and the category of left $\Gamma$-modules in dg commutative algebras $\DGCA^\Gamma$.
(Recall that we use the phrase `left Hopf $\Gamma$-module' to refer to this category of left $\Gamma$-modules $\DGCA^\Gamma$.)
The above adjunction is clearly a Quillen adjunction for the projective model structure on $\DGCA^\Gamma$
and the injective model structure on $\sset^{\Gamma^{op}}$
(the functor $\Omega^*$ preserves the weak equivalences and carries the cofibrations of the injective model structure on $\sset^{\Gamma^{op}}$
to fibrations in the projective model category of diagrams $\DGCA^\Gamma$
since it does so objectwise)\footnote{The same observation holds if we equip $\DGCA^\Gamma$ and $\sset^{\Gamma^{op}}$
with the Reedy model structure (using the general result of~\cite{BM} in the case of the category $\DGCA^\Gamma$),
but we only use the case of the projective and injective model structures in this paper,
because the left $\Gamma$-module in dg commutative algebras
that we consider in our applications
is naturally cofibrant with respect to the projective model structure.}.
Thus we can upgrade the classical Sullivan rational homotopy theory of spaces to right $\Gamma$-modules.

We can obviously compute the model of a right $\Gamma$-module $\Omega^*(X)$ objectwise
since the functor $\Omega^*$ preserves all weak-equivalences of simplicial sets.
In what follows, we also apply the Sullivan model functor to right $\Gamma$-modules in topological spaces
and we still write $\Omega^*(X) = \Omega^*(S_\bullet(X))$ in this case,
by the same abuse of notation as in the category of spaces.
We easily check that a cofibrant object of the category of left Hopf $\Gamma$-modules $\DGCA^\Gamma$
is cofibrant in the category of dg commutative algebras objectwise.
We deduce from this observation that the rationalization of a right $\Gamma$-module $X^\Q$
reduces to the rationalization of the spaces
underlying our object objectwise.
We still have the relation $\sset^{\Gamma^{op},h}(X,Y^\Q)\simeq\DGCA^{\Gamma,h}(R,\Omega^*(X))$ at the mapping space level,
for any choice of a left Hopf $\Gamma$-module $R$ quasi-isomorphic to $\Omega^*(Y)$.
We more generally have the relation $\sset^{\Gamma^{op},h}(X,LG(R))\simeq\DGCA^{\Gamma,h}(R,\Omega^*(X))$
when we take the image of a left Hopf $\Gamma$-module $R$
under the derived realization functor $LG$.
In subsequent arguments, we also use an extension of this relation for mapping spaces of right $\Gamma$-modules in topological spaces
by using that the Quillen equivalence between the categories of right $\Gamma$-modules in simplicial sets
and in topological spaces gives an equivalence at the mapping space level.


\subsection{Homotopy theory of operads and infinitesimal bimodules}\label{sec:htpy theory ibimod}
We let $\Sigma$ be the category of finite sets with bijections as morphisms and $\Lambda$ be the category of finite sets with injective maps as morphisms.
We again consider the category of right $\Sigma$-modules, which we define as the category of contravariant functors $T: \Sigma^{op}\to\mC$
with values in any category $\mC$,
the category of right $\Lambda$-modules, which consists of the contravariant functors $T: \Lambda^{op}\to\mC$,
and the symmetrically defined categories of left $\Sigma$-modules
and of left $\Lambda$-modules.
In the literature, the expression `symmetric sequence' or `collection' is also used for our categories of right $\Sigma$-modules.
In~\cite{FrI,FrII}, the expression `$\Lambda$-sequence' is used for the category of right $\Lambda$-modules while the expression `covariant $\Lambda$-sequence'
is used for the category of left $\Lambda$-modules.

In what follows, we notably consider the category of topological right $\Sigma$-modules $\TopCat^{\Sigma^{op}}$
and the category of topological right $\Lambda$-modules $\TopCat^{\La^{op}}$.
We equip $\TopCat^{\Sigma^{op}}$ with the projective model structure and $\TopCat^{\La^{op}}$ with the Reedy model structure (see~\cite[Sections II.8.1 and II.8.3]{FrII}).

The topological operads that we use in this paper are \emph{reduced} in the sense that their arity zero component is reduced to a point.
Recall that the underlying collection of a reduced operad inherits a right $\Lambda$-module structure.
We use this observation to equip the category of reduced operads with the Reedy model structure transferred from the category of right $\Lambda$-modules (see~\cite[Section II.8.4]{FrII}).
The Fulton--MacPherson operad $\FM_m$, of which we briefly recall the definition in the next subsection, is Reedy cofibrant.

Recall that an \emph{infinitesimal bimodule} over an operad $\mP$ is a right $\Sigma$-module $M$, endowed with a right $\mP$-module structure,
governed by right composition products $\circ_i: M(k)\otimes\mP(\ell)\to M(k+\ell-1)$, $k\geq 1$, $\ell\geq 0$, $1\leq i\leq k$,
together with a compatible \emph{infinitesimal left $\mP$-action},
governed by composition products of the form $\circ_i: \mP(k)\otimes M(\ell)\to M(k+\ell-1)$, $k\geq 1$, $\ell\geq 0$, $1\leq i\leq k$.
For details, see~\cite{DFT}. The category of infinitesimal $\mP$-bimodules is denoted by $\IBimod_\mP$.

For any reduced well-pointed topological operad $\mP$, the category $\IBimod_\mP$ can be equipped with the projective model structure,
transferred by adjunction from the projective model structure on $\TopCat^{\Sigma^{op}}$.
The category $\IBimod_\mP$ can also be equipped with the Reedy model structure,
which is transferred by adjunction from the Reedy model structure on $\TopCat^{\La^{op}}$.
(By~\cite[Theorem 5.1]{DFT}, this construction returns a valid model structure
as soon as the operad $\mP$
is well-pointed in the sense that the operadic unit defines
a cofibration of topological spaces in arity one $*\rightarrow\mP(1)$.)
%
%
Note that the derived mapping spaces $\IBimod_\mP^h(\cdots)$ coming from the projective and Reedy model structures
are equivalent (see~\cite[Theorem~5.9]{DFT},
or use the general result of \cite[Proposition 4.4]{DwyerKan}).

In the paper, we use that the structure of a right $\Gamma$-module is the same as the structure of an infinitesimal $\Com$-bimodule,
where $\Com$ is the set-theoretic operad of unital commutative algebras (the commutative operad),
given by the one-point set $\Com(r) = *$
in every arity $r\geq 0$ (see \cite[Lemma~4.3]{Turchin1}).
Thus, with our notation, we have a category identity $\TopCat^{\Gamma^{op}} = \IBimod_{\Com}$.
The Reedy model structure of the category of infinitesimal bimodules $\IBimod_{\Com}$, which we consider in this paragraph,
is also identified with the Reedy model structure of the category of right $\Gamma$-modules in topological spaces
which we considered in the previous paragraphs.

\subsection{Fulton--MacPherson compactified configuration space}\label{ss:FM}
The Fulton--MacPherson operad, denoted by $\FM_n$, is a classical model of $E_n$-operad.
This operad was introduced by Getzler-Jones \cite{GetzJones}, by using a real differential-geometric variant of the Fulton--MacPherson compactifications of the configuration spaces of points
in $\R^n$.\footnote{This type of compactification is sometimes called Fulton--MacPherson--Axelrod--Singer compactification.}
Recall that the points of $\FM_n(S)$, where $S$ is any finite sets, are represented by rooted trees with leaves indexed by $S$,
and where each internal vertex with $k$ children is labelled by a configuration of $k$ points
in $\R^n$
modulo scaling and translation.
The internal vertices are required to have at least $2$ children, as in the following picture, where we take $S = \{1,\dots,6\}$
as index set for the leaves of our tree:
\[
\resizebox{3cm}{4cm}
{
\begin{tikzpicture}
\draw node {}
child{
child { node {1} }
child {node {2} }
child {
child {
child {node {3} }
child {node {4} }
child {node {5} }
 }
child {node {6} }
 }
};
\end{tikzpicture}
}
\]

We consider a similarly defined infinitesimal $\FM_n$-bimodule $\IF_n$ \cite[Section~6]{Turchin4}\footnote{In \cite{Turchin4}, this infinitesimal bimodule is denoted by $C_*[\bullet,S^n]$.}.
Points in $\IF_n(S)$ are rooted trees with leaves indexed by $S$, one distinguished vertex, the pearl, and with the following decorations.
Non-pearl vertices with $k$ children are labelled by a configuration of $k$ points in $\R^n$ modulo scaling and translation.
The pearl (with say $k$ children) is labelled by a configuration of $k$ points in $\R^n$ (but not modulo scaling or translation).
The pearl is allowed to have any arity $\geq 0$, while all the other internal vertices have an arity $\geq 2$.
If the pearl has arity zero, then this means that all the points escape to infinity in $\R^n\cup\{\infty\} = S^n$.
The following picture gives the representation of such a pearled tree.
\[
\resizebox{3cm}{4cm}{
\begin{tikzpicture}
\draw node {}
child{
child { node {1} }
child {node {2} }
child { node[ext]{\text{ } \text{ }}
child {
child {node {3} }
child {node {4} }
child {node {5} }
 }
child {node {6} }
 }
};
\end{tikzpicture}
}
\]

Now let $M\subset\R^m$ be an open subset containing a neighborhood of~$\infty$.
Recall that we write $M_* := M\cup\{\infty\}$ for the space obtained by adding to $M$ the point at infinity in $S^m = \R^m\cup\{\infty\}$.
We consider the infinitesimal $\FM_m$-bimodule $\IF_M\subset\IF_m$
composed of those configurations
that lie in the pre-image of $M_*^{\times r}$ under the map $\IF_m(r)\to(S^m)^r$.
As a space, $\IF_M(k)$ is the Fulton--MacPherson--Axelrod--Singer local compactification of the configuration space $C_*(k,M_*)$ of $k+1$ points in $M_*$, one
of which is fixed to be~$\infty$.

We need the following result.

\begin{prop}\label{prop:EMReedyCof}
The infinitesimal $\FM_m$-bimodule $\IF_M$ is cofibrant in $\IBimod_{\FM_m}$ equipped with the Reedy model structure.
\end{prop}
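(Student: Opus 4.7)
The plan is to verify Reedy cofibrancy by a direct analysis of the latching maps, exploiting the manifold-with-corners structure of the Fulton--MacPherson--Axelrod--Singer compactifications. Since the Reedy structure on $\IBimod_{\FM_m}$ is transferred by adjunction from $\TopCat^{\Lambda^{op}}$, the latching object $L_r\IF_M$ at arity $r$ is a pushout that combines two contributions: the image of the $\Lambda$-structure (coming from configurations that can be lifted from a smaller index set by forgetting a point) and the image of the operadic structure maps of $\FM_m$ and of the infinitesimal left/right actions (coming from pearled trees that admit a non-trivial composition of either an $\FM_m$-vertex into a pearled configuration or a pearled vertex into the pearl). The first step is therefore to write this latching object explicitly as a colimit indexed by the pairs $(T,v)$ where $T$ is a non-trivial pearled tree with $r$ leaves and $v$ is the internal vertex being ``contracted''.

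Next, I would show that under the tree-description of $\IF_M(r)$, the latching object $L_r\IF_M$ is precisely identified with the boundary stratum of the manifold-with-corners $\IF_M(r)$, that is, with the subspace of pearled trees having at least one internal edge. Each generating pushout summand corresponds to a codimension-$\geq 1$ face in the Fulton--MacPherson stratification: contraction of an $\FM_m$-edge corresponds to a collision stratum away from the pearl (away from infinity), while contraction of an edge adjacent to the pearl corresponds to a collision stratum involving the distinguished point at infinity. The $\Lambda$-part accounts for the strata where one of the configuration points is forgotten. One must verify that distinct strata glue correctly, so the colimit of the summands injects into $\IF_M(r)$ with image equal to the full boundary; this is essentially bookkeeping, exploiting that the Fulton--MacPherson compactification is free over its interior and that different ways of reaching a common boundary face agree.

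Finally, once $L_r\IF_M \hookrightarrow \IF_M(r)$ has been identified with the inclusion of the boundary of a compact smooth manifold-with-corners into its total space, it is a closed cofibration of topological spaces: such inclusions admit a collar (a consequence of the existence of corner charts), and a collared closed inclusion is a cofibration in $\TopCat$. In fact this is the same mechanism that shows $\FM_m$ itself to be Reedy cofibrant as a reduced operad (see~\cite[Section II.8.4]{FrII}), so the argument for $\IF_M$ is a direct extension of that verification with the pearl vertex as an additional decoration; the pearl carries an \emph{unnormalized} configuration in $\R^m$ (constrained to lie in $M_*$), which preserves the manifold-with-corners structure since $M$ is open in $\R^m$.

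The main obstacle will be the careful combinatorial identification of the latching object as a pushout and its matching with the boundary stratification: there are several types of composition operations (right $\Lambda$, right $\FM_m$-action, infinitesimal left $\FM_m$-action at the pearl) and one must check that the colimit along these overlapping summands gives the boundary exactly once, with no residual identifications or over-counting. Once this identification is in place, the cofibrancy of the inclusion reduces to the standard collar argument and the result follows.
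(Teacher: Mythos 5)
Your geometric picture---the stratification of the Fulton--MacPherson--Axelrod--Singer compactification by pearled trees furnishing a cell attachment, and collar neighborhoods giving cofibrancy of boundary inclusions---is the right substance, but there are two misconceptions in the Reedy bookkeeping that would derail a rigorous write-up. First, you include ``the image of the $\Lambda$-structure'' in your latching object, but in the Reedy structure on $\La$-sequences every non-invertible morphism of $\La^{op}$, namely restriction along an injection (forgetting a point), strictly \emph{decreases} arity: these operations belong on the \emph{matching} side of the Reedy factorization, not the latching side, and they are not boundary phenomena of the compactification at all (the forgetful map $\IF_M(r)\to\IF_M(r-1)$ is defined on all of $\IF_M(r)$, not just on a face). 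Only the operadic $\circ_i$-compositions coming from $(\FM_m)_{>0}$---collisions away from and at the pearl---contribute to the relevant boundary. Second, because the model structure on $\IBimod_{\FM_m}$ is transferred along a free-forgetful adjunction, cofibrancy is not simply ``each latching map is a cofibration'' as for a Reedy structure on a diagram category; one must exhibit an actual cell complex built from free infinitesimal bimodules on $\Sigma$-cofibrations.

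The paper's proof dispatches both issues at once by citing \cite[Theorem~5.4]{DFT}, which identifies Reedy-cofibrant infinitesimal $\FM_m$-bimodules with infinitesimal $(\FM_m)_{>0}$-bimodules that are \emph{projectively} cofibrant, where $(\FM_m)_{>0}$ has the arity-zero component deleted, and then invoking the cell-structure argument analogous to \cite[Lemma~2.2]{Turchin4}. The deletion of arity zero is precisely what removes the arity-decreasing (forgetting) operations from the operadic side, so that the cell structure is governed by the boundary stratification that you have in mind. If you want to run your argument to completion, do it in the projective framework over $(\FM_m)_{>0}$: filter $\IF_M$ by complexity of pearled trees, show that each stage is a pushout of a free infinitesimal bimodule on a $\Sigma_r$-equivariant boundary inclusion of a compact manifold with corners, and note that $\Sigma_r$-cofibrancy (not merely topological cofibrancy) holds because $\Sigma_r$ acts freely on configurations of distinct points and the collar can be chosen equivariantly.
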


\begin{proof}
Implicitly this result appeared in the proof of \cite[Theorem 6.5]{Turchin4}.
Recall that, by~\cite[Theorem~5.4]{DFT}, an infinitesimal bimodule over a reduced operad $\op P$ is cofibrant in the Reedy model structure
if and only it is cofibrant as an infinitesimal bimodule
over $\op P_{>0}$ in the projective model structure,
where $\op P_{>0}$ is the operad such that ${\op P}_{>0}(0) = \emptyset$ and ${\op P}_{>0}(k) = {\op P}(k)$, for $k>0$.
The fact that $\IF_M$ is cofibrant in $\IBimod_{(\FM_m)_{>0}}$  is analogous to \cite[Lemma~2.2]{Turchin4}\footnote{In \cite{Turchin4}, the second author considers
the projective model structure on $\IBimod_{\FM_m}$, in which for the cofibrancy of an object, the action of $\FM_m(0)$ should also be free.
Because of this the cofibrant object considered in \cite{Turchin4} and denoted by $\widetilde{C}_*[\bullet,M_*]$
is more complicated -- it is obtained from $C_*[\bullet,M_*]=:\IF_M$ by adding non-labelled hairs of length $\leq 1$.}.
\end{proof}


\subsection{Relative rational formality of the little disks operads: Recollections from \cite{FW,TW}}\label{ss:relative}
It is shown in \cite{FW} that the natural maps $\FM_m\to\FM_n$, induced by the inclusion $\R^m\to\R^n$, are rationally formal for $n-m\geq 2$.
Concretely, reformulating slightly \cite[Theorems C and D]{FW}, one has a commutative diagram of reduced operads
\[
\begin{tikzcd}
\FM_m\ar{r} & \FM_n^\Q \\
R_m \ar{r} \ar{u}{\sim} \ar{d} & R_n^\Q \ar{u}{\sim} \ar{d}{\sim} \\
 * \ar{r} & H_n
\end{tikzcd}
\]
relating the map $\FM_m\to\FM_n^\Q$ to one factorizing through the commutative operad $\Com = *$.
Here $R_m$ and $R_n$ are intermediate operads whose precise form is not relevant at this point, whereas $H_n = LG(H^*(\FM_n))$
is defined by taking the image of the cohomology of the topological operad $\FM_n$
under the derived functor of the Sullivan realization functor $LG$. (We then regard the object $H^*(\FM_n)$
as a dg Hopf cooperad equipped with a trivial differential.)

Recall that, for $n\geq 2$, the cohomology of the Fulton--MacPherson operad $H^*(\FM_n)$ is identified with the $n$-Poisson cooperad $\Poiss_n^c$,
the dual of the operad $\Poiss_n$ that governs the category of graded commutative algebras
equipped with a Poisson bracket of degree $n-1$.
Let $\Com^c$ denote the dual cooperad of the operad of commutative algebras in vector spaces.
We have a morphism of graded Hopf cooperads $H^*(\FM_n) = \Poiss_n^c\rightarrow\Com^c$,
dual to the morphism of graded operads $\Com\rightarrow\Poiss_n$,
which reflects the obvious restriction of structure functor from the category of graded Poisson algebras governed by the operad $\Poiss_n$
to the category of graded commutative algebras.
We have $LG(\Com^c) = G(\Com^c) = *$ and the morphism $*\rightarrow H_n$ in the above diagram is identified with the image of the morphism
of graded Hopf cooperads $H^*(\FM_n)\rightarrow\Com^c$
under the derived functor of the Sullivan realization functor $LG$.

Let us note that by generalities on model categories we may assume that the upper left vertical arrow of our diagram is a fibration\footnote{Let $A\leftarrow B\to C$ be a zigag in a model category. We may factorize the second map into $B\xhookrightarrow{\sim} B' \twoheadrightarrow C$ and then lift to obtain a zigzag $A \leftarrow B' \twoheadrightarrow C$.
In our case we apply this construction to the arrow category equipped with the projective model structure.}.
Then we may use that $\FM_m$ is Reedy cofibrant and invert the arrow. Furthermore, note that, trivially, all objects on the right act on themselves considered as operadic infinitesimal bimodules.
For later reference we shall hence record the following diagram of operads (three left-hand columns) and infinitesimal bimodules (right-hand column).
\beq{equ:formality diag}
\begin{tikzcd}
\FM_m \ar{r}{=} \ar{dr} & \FM_m \ar{r} & \FM_n^\Q & \FM_n^\Q \ar[loop left]{}\\
& R_m \ar{r} \ar{u}{\sim} \ar{d} & R_n^\Q \ar{u}{\sim} \ar{d}{\sim} & R_n^\Q \ar{u}{\sim} \ar{d}{\sim}\ar[loop left]{}\\
& * \ar{r} & H_n & H_n\ar[loop left]{}
\end{tikzcd}
\eeq
We will make use of the restriction functors associated to the operad morphism of the right-hand side for our infinitesimal bimodule structures.
We can for instance regard the object $H_n$ as an infinitesimal $\Com$-bimodule
by restriction through the operad morphism $\Com = *\rightarrow H_n$.
Recall that this  implies that $H_n$ inherits a natural right $\Gamma$-module structure
in the category of topological spaces.
We go back to this subject in Section~\ref{sec:graph-Gamma-module}.

\subsection{Graph complexes and graph (co)operads}\label{sec:graphs}
We shall use the graph cooperads $\Graphs_n$ defined by Kontsevich \cite{K2}.
Let us briefly recall the definition.
An admissible graph with $r$ external and $k$ internal vertices is an undirected graph such that the following holds.
\begin{itemize}
\item The external vertices are numbered $1,\dots,r$.
\item There is at least one external vertex in any connected component.
\item All internal vertices have at least valence 3.
\end{itemize}
Tadpoles and multiple edges are allowed. Here is an example of an admissible graph
\[
 \begin{tikzpicture}
  \node[ext] (v1) at (0,0) {1};
  \node[ext] (v2) at (0.5,0) {2};
  \node[ext] (v3) at (1,0) {3};
  \node[ext] (v4) at (1.5,0) {4};
  \node[ext] (v5) at (2,0) {5};
  \node[int] (w1) at (0.5,.7) {};
  \node[int] (w2) at (1.0,.7) {};
  \draw (v1) to [out=50,in=120] (v2) (v1) edge (w1)  (v2) edge (w1) edge (w2) (v3) edge (w1) edge (w2) (v4) edge (w2) (w1) edge (w2);
 \end{tikzpicture}\, .
\]
A graph with $\#E$ edges and $\#V$ internal vertices is assigned degree $(n-1)\#E-n\#V$.   An $n$-orientation on an admissible graph is the following data:
\begin{itemize}
\item For $n$ even it is an ordering of the set of edges up to even permutations.
\item For $n$ odd it is an ordering of the set of half-edges and vertices up to even permutations.
\end{itemize}
We call an admissible graph with orientation data an oriented graph.

We now define the space $\Graphs_n(r)$ as the $\Q$-linear combinations of isomorphism classes of ($n$-)oriented admissible graphs with $r$ external vertices, together with an orientation. We identify an oriented graph with minus the same graph with opposite orientation.

The spaces $\Graphs_n(r)$ assemble into a dg Hopf cooperad.
The differential is obtained by edge contraction.
  \begin{align*}
   \delta
   \begin{tikzpicture}[baseline=-.65ex]
    \node[ext] (v) at (0,0) {$i$};
    \node[int](w) at (0,.5) {};
    \draw (v) edge +(-.5,.5) edge +(.5,.5) edge (w) (w) edge +(-.2,.5) edge +(.2,.5);
   \end{tikzpicture}
&=
   \begin{tikzpicture}[baseline=-.65ex]
    \node[ext] (v) {$i$};
    \draw (v) edge +(-.5,.5) edge +(-.2,.5) edge +(.2,.5) edge +(.5,.5);
   \end{tikzpicture}
   &
   \delta
   \begin{tikzpicture}[baseline=-.65ex]
    \node[int] (v) at (0,0) {};
    \node[int](w) at (0,.3) {};
    \draw (v) edge +(-.5,.5) edge +(.5,.5) edge (w) (w) edge +(-.2,.5) edge +(.2,.5);
   \end{tikzpicture}
&=
   \begin{tikzpicture}[baseline=-.65ex]
    \node[int] (v) {};
    \draw (v) edge +(-.5,.5) edge +(-.2,.5) edge +(.2,.5) edge +(.5,.5);
   \end{tikzpicture}
  \end{align*}
The dg commutative algebra structure is given by gluing graphs along external vertices.
  \[
      \begin{tikzpicture}[baseline=-.65ex]
  \node[ext] (v1) at (0,0) {1};
  \node[ext] (v2) at (0.5,0) {2};
  \node[ext] (v3) at (1,0) {3};
  \node[ext] (v4) at (1.5,0) {4};
  \node[int] (w1) at (0.5,.7) {};
  \draw (v1) to [out=50,in=120] (v2) (v1) edge (w1) (v2) edge (w1) (v3) edge (w1) ;
 \end{tikzpicture}
 \wedge
     \begin{tikzpicture}[baseline=-.65ex]
  \node[ext] (v1) at (0,0) {1};
  \node[ext] (v2) at (0.5,0) {2};
  \node[ext] (v3) at (1,0) {3};
  \node[ext] (v4) at (1.5,0) {4};
  \node[int] (w2) at (1.0,.7) {};
  \draw   (v2)  edge (w2) (v3)  edge (w2) (v4) edge (w2);
 \end{tikzpicture}
 =
     \begin{tikzpicture}[baseline=-.65ex]
  \node[ext] (v1) at (0,0) {1};
  \node[ext] (v2) at (0.5,0) {2};
  \node[ext] (v3) at (1,0) {3};
  \node[ext] (v4) at (1.5,0) {4};
  \node[int] (w1) at (0.5,.7) {};
  \node[int] (w2) at (1.0,.7) {};
  \draw (v1) to [out=50,in=120] (v2) (v1)  edge (w1) (v2) edge (w1) edge (w2) (v3) edge (w1) edge (w2) (v4) edge (w2);
 \end{tikzpicture}
  \]
Finally the cooperadic cocompositions are defined by subgraph contraction.
\[
    \begin{tikzpicture}[baseline=-.65ex]
  \node[ext] (v1) at (0,0) {1};
  \node[ext] (v2) at (0.5,0) {2};
  \node[ext] (v3) at (1,0) {3};
  \node[ext] (v4) at (1.5,0) {4};
  \node[ext] (v5) at (2,0) {5};
  \node[int] (w1) at (0.5,.7) {};
  \node[int] (w2) at (1.5,.7) {};
  \draw (v1) to [out=50,in=120] (v2) (v1) edge (w1) (v2) edge (w1)  (v3) edge (w1) edge (w2) (v4) edge (w2) (v5) edge (w2);
 \end{tikzpicture}
 \mapsto
     \begin{tikzpicture}[baseline=-.65ex]
  \node[ext] (v1) at (0,0) {1};
  \node[ext] (v2) at (0.5,0) {2};
  \node[ext] (v3) at (1,0) {*};
  \node[int] (w1) at (0.5,.7) {};
  \draw (v1) to [out=50,in=120] (v2) (v1) edge (w1) (v2) edge (w1)  (v3) edge (w1);
 \end{tikzpicture}
 \otimes
     \begin{tikzpicture}[baseline=-.65ex]
  \node[ext] (v3) at (1,0) {3};
  \node[ext] (v4) at (1.5,0) {4};
  \node[ext] (v5) at (2,0) {5};
  \node[int] (w2) at (1.5,.7) {};
  \draw (v3) edge (w2) (v4) edge (w2) (v5) edge (w2);
 \end{tikzpicture}
+
 \begin{tikzpicture}[baseline=-.65ex]
  \node[ext] (v1) at (0,0) {1};
  \node[ext] (v2) at (0.5,0) {2};
  \node[ext] (v3) at (1,0) {*};
  \node[int] (w1) at (0.5,.7) {};
  \node[int] (w2) at (1,.7) {};
  \draw (v1) to [out=50,in=120] (v2) (v1) edge (w1) (v2) edge (w1)  (v3) edge (w1) (v3) edge (w2) (v3) to [out=110,in=210] (w2) (v3) to [out=70,in=330] (w2);
 \end{tikzpicture}
 \otimes
     \begin{tikzpicture}[baseline=-.65ex]
  \node[ext] (v3) at (1,0) {3};
  \node[ext] (v4) at (1.5,0) {4};
  \node[ext] (v5) at (2,0) {5};
 \end{tikzpicture}
  \]

Note that each dg commutative algebra $\Graphs_n(r)$ is quasi-free, generated by the internally connected graphs $\ICG_n(r)\subset\Graphs(r)$,
so that, as graded commutative algebra $\Graphs_n(r) = S(\ICG_n(r))$.
There is a natural map of dg Hopf cooperads
\[
\Graphs_n\to H^*(\FM_n)
\]
given on the algebra generators $\ICG_n$ by sending any graph with internal vertices to zero, and by sending the edge between vertices $i$ and $j$ to the generator $\omega_{ij}\in H^*(\FM_n)$.
We then have the following important statement.

\begin{thm}[Kontsevich \cite{K2}, Lambrechts-Voli\'c \cite{LVformal}]
The map of dg Hopf cooperads $\Graphs_n\to H^*(\FM_n)$ is a quasi-isomorphism for every $n\geq 2$.
\end{thm}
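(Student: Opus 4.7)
The plan is to follow the classical Kontsevich/Lambrechts--Voli\'c strategy via configuration space integrals. The idea is to lift the map in the statement (which on generators sends an isolated edge $(i,j)$ to $\omega_{ij}$ and every graph with at least one internal vertex to $0$) to an honest morphism of dg Hopf cooperads $I:\Graphs_n \to \Omega^*_{\mathrm{PA}}(\FM_n)$ with values in piecewise semi-algebraic forms (for which fiber integration along the canonical projections $\FM_n(r+k)\to\FM_n(r)$ is available). Concretely, for an admissible graph $\Gamma$ with $r$ external vertices, $k$ internal vertices, and edge set $E$, I would set
\[
I(\Gamma) \;=\; \pi_* \Bigl(\, \bigwedge_{e\in E} p_e^*\,\omega \,\Bigr),
\]
where $\pi:\FM_n(r+k)\to\FM_n(r)$ forgets the internal points, $\omega\in\Omega^{n-1}_{\mathrm{PA}}(\FM_n(2))$ is the $SO(n)$-invariant normalized volume form pulled back from $\FM_n(2)\simeq S^{n-1}$, and $p_e:\FM_n(r+k)\to\FM_n(2)$ projects onto the two endpoints of $e$.

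The second step is to verify that $I$ is a morphism of dg Hopf cooperads. Compatibility with the cup product is immediate since gluing graphs along external vertices corresponds to wedging the associated forms. Compatibility with the cooperadic cocompositions follows from the natural identification of principal boundary strata of $\FM_n(r)$ with iterated operadic compositions. The crucial computation is compatibility with the differentials: applying Stokes' theorem along the fibers of $\pi$ expresses $d\,I(\Gamma)$ as a sum of integrals over the codimension-one boundary strata of the fiber, indexed by subsets $S$ of vertices that collapse. These strata split into \emph{principal faces} (two adjacent vertices collide, giving back exactly the edge-contraction differential $\delta\Gamma$) and \emph{hidden faces} (three or more vertices collide, or a subset of vertices escapes to infinity). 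Kontsevich's vanishing lemmas show that all hidden contributions vanish: for collisions of $|S|\geq 3$ internal vertices one uses $SO(n)$-equivariance combined with a dimension count, while faces involving external points vanish by admissibility and the trivalence hypothesis on internal vertices.

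Composing $I$ with the projection $\Omega^*_{\mathrm{PA}}(\FM_n)\to H^*(\FM_n)$ yields a morphism of dg Hopf cooperads that on generators $\ICG_n\subset\Graphs_n$ kills all graphs carrying an internal vertex (for degree reasons after integration) and sends the two-vertex edge graph to $\omega_{ij}$, hence coincides with the map of the statement. To prove this composite is a quasi-isomorphism arity by arity, I would filter $\Graphs_n(r)$ by the number of internal vertices and show that the associated graded subcomplex of graphs with at least one internal vertex is acyclic via a Koszul-type contracting homotopy (essentially contracting an edge to an internal vertex of minimal valence). The surviving quotient is the free graded commutative algebra on the edge classes $e_{ij}$ modulo the relations produced by the boundary of the unique trivalent tree with one internal vertex connecting three external vertices $i,j,k$; those relations are precisely the Arnold relations $\omega_{ij}\omega_{jk}+\omega_{jk}\omega_{ki}+\omega_{ki}\omega_{ij}=0$. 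This is the standard Arnold presentation of $\Poiss_n^c(r)=H^*(\FM_n(r))$, so the induced map is an isomorphism in every arity.

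The main obstacle is the vanishing of hidden-face contributions in the Stokes computation of the second step. The collapse of $\geq 3$ internal points requires Kontsevich's subtle $SO(n)$-equivariance-plus-degree-parity argument, and faces involving the ``point at infinity'' of the local compactification $\FM_n(r)$ need an analogous treatment, while special care is needed for tadpoles and multiple edges (which is why the admissibility condition forces internal vertices to have valence $\geq 3$). A secondary technical point is ensuring that the construction is well-defined over $\Q$ rather than only over $\R$: this uses that the integrals of ``basic'' graphs $\omega_{ij}$ give rational periods, while the remaining ``exotic'' integrals either cancel in pairs or are absorbed into the image of coboundaries under the quasi-isomorphism to $H^*(\FM_n;\Q)$.
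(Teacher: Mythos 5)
The paper does not prove this theorem; it simply cites Kontsevich \cite{K2} and Lambrechts--Voli\'c \cite{LVformal}, so there is no in-text argument to compare against. Your sketch, however, conflates two different statements, and several of its steps fail as written.

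The result at hand is purely combinatorial: the explicit morphism of dg Hopf cooperads $\Graphs_n \to H^*(\FM_n) = \Poiss_n^c$ (with trivial differential on the target) is a quasi-isomorphism. No integration over configuration spaces is needed for this. The integral map $I\colon \Graphs_n \to \Omega^*_{\mathrm{PA}}(\FM_n)$ built in your first two paragraphs is a separate ingredient, required for the \emph{formality} of $\FM_n$; it cannot be used in the way you propose here. Three concrete problems: (i) There is no morphism of cooperads, nor even of complexes, $\Omega^*_{\mathrm{PA}}(\FM_n)\to H^*(\FM_n)$ with which to compose $I$ — exhibiting such a map is essentially equivalent to the formality statement you would be trying to deduce, so the construction is circular. (ii) The claim that $I$ kills every graph carrying an internal vertex ``for degree reasons'' is false: the tripod $T$ with one internal vertex and three external legs is sent to a nonzero form of degree $3(n-1)-n=2n-3$ on $\FM_n(3)$; it is precisely $d\,I(T)$ that produces a representative of the Arnold relation. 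What is true is that the \emph{cohomological} map sends such graphs to zero for degree reasons ($H^*(\FM_n(r))$ is concentrated in degrees divisible by $n-1$), but that is the map you are trying to understand, not a consequence of $I$. (iii) Filtering $\Graphs_n(r)$ by the number of internal vertices cannot give the acyclicity you want: the differential (edge contraction) strictly decreases that number, so the associated graded differential vanishes identically, and the associated graded of the piece with internal vertices has maximal, not trivial, cohomology.

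The correct route is to attack $H^*(\Graphs_n(r))$ combinatorially. Roughly, one has to show that the internally connected generators with internal vertices contribute exactly the Arnold relations, and no further cohomology, to the free cdga $\Graphs_n(r) = S(\ICG_n(r))$. This requires a finer invariant than internal-vertex count (for instance the partition of external vertices induced by which are joined through internal ones), together with explicit contracting homotopies on the relevant summands. The mechanics are carried out in \cite{LVformal}; you should consult that argument rather than attempt to derive the statement as a corollary of the integral construction.
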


\subsection{The left $\Gamma$-module structure of the graph cooperad}\label{sec:graph-Gamma-module}
We recalled in Section~\ref{ss:relative} that the graded Hopf cooperad $H^*(\FM_n)$ is equipped with a morphism $H^*(\FM_n)\rightarrow\Com^c$,
where $\Com^c$ is the dual cooperad of the operad that governs the category of commutative algebras in vector spaces.
We explicitly have $\Com^c(r) = \Q$.
We have an immediate description of the restriction of this morphism to the dg Hopf cooperad of graphs
as the morphism $\Graphs_n\rightarrow\Com^c$
such that $\gamma\mapsto 0$ if the graph $\gamma$ has at least one edge.

We mention in Section~\ref{ss:relative} that the operad $H_n = LG(H^*(\FM_n))$, which we define by taking the image of the cohomology cooperad $H^*(\FM_n)$
under the left derived functor of the Sullivan realization functor $LG$,
inherits the structure of an infinitesimal bimodule over the commutative operad,
and as a consequence, forms a right $\Gamma$-module
in the category of topological spaces.
We can dually see that the cohomology cooperad $H^*(\FM_n)$ inherits the structure of an infinitesimal bicomodule
over the commutative cooperad. We also have an isomorphism between the category of infinitesimal $\Com^c$-bicomodules
in a category and the category of left $\Gamma$-modules (just as we observed
that the category of infinitesimal $\Com$-bimodules in topological spaces
is isomorphic to the category of right $\Gamma$-modules). Hence, providing $H^*(\FM_n)$ with such an infinitesimal bicomodule structure
amounts to providing this object with a left Hopf $\Gamma$-module structure.
The right $\Gamma$-module structure of the object $H_n = LG(H^*(\FM_n))$
actually corresponds to this left Hopf $\Gamma$-module structure
of the cohomology $H^*(\FM_n)$
when we take the objectwise realization functor of~\eqref{equ:GOmega2}
for $\Gamma$-modules.

By restriction, we get that the cooperad of graphs $\Graphs_n$ inherits an infinitesimal $\Com^c$-bicomodule structure
and hence, a left Hopf $\Gamma$-module structure like the cohomology cooperad $H^*(\FM_n)$.
We then consider the collection $\Graphs_n(S)$ associated to arbitrary finite sets $S$,
which we define by an obvious generalization of the definitions
of Section~\ref{sec:graphs}.
We take this object $\Graphs_n(S)$ to define the image of a pointed set $S_* = \{*\}\sqcup S$ under our functor on the category $\Gamma$,
while we determine the morphism $f_*: \Graphs_n(S)\rightarrow\Graphs_n(T)$ associated to a map $f: S_*\rightarrow T_*$
of pointed sets $S_* = \{*\}\sqcup S$ and $T_* = \{*\}\sqcup T$
as follows.
Let $\gamma\in\Graphs(S)$.
We consider the external vertices $v_s$ indexed by the elements such that $s\in f^{-1}(*)$.
We take $f_*\gamma = 0$ if some of these external vertices $v_s$
have an incident edge in $\gamma$.
We discard these external vertices otherwise and we merge the external vertices $v_s$ associated to the elements $s\in f^{-1}(t)$
in the fiber of an index $t\in T$ to obtain the graph $f_*\gamma\in\Graphs_n(T)$
(with the merged external vertex indexed by the corresponding element $t\in T$).

The collection $\ICG_n$ is preserved by the left $\Gamma$-module structure of the cooperad $\Graphs_n$
and hence, inherits the structure of a left $\Gamma$-module
in the category of graded vector spaces.
The cross effect of this collections $\cro\ICG_n$
is identified with the collection of internally connected graphs
with no isolated external vertices.
We easily see that this left $\Omega$-module $\cro\ICG_n$ is free in the sense that every morphism of left $\Omega$-modules on $\cro\ICG_n$
is uniquely determined by a morphism of left $\Sigma$-modules on a left $\Sigma$-module of generators $\ICGp_n\subset\cro\ICG_n$,
which consists of graphs all of whose external vertices have valence exactly one.
We will call such internally connected graphs primitive. For example:
\[
\begin{tikzpicture}[scale=.7,baseline=-.65ex]
\node[int] (v1) at (-1,0){};
\node[int] (v2) at (0,1){};
\node[int] (v3) at (1,0){};
\node[int] (v4) at (0,-1){};
\node[ext] (w1) at (-2,0) {$1$};
\node[ext] (w2) at (2,0) {$2$};
\node[ext] (w3) at (0,-2) {$3$};
\draw (v1)  edge (v2) edge (v4) edge (w1) (v2) edge (v4) (v3) edge (v2) edge (v4) (v4) edge (w3) (v3) edge (w2);
\end{tikzpicture}
\in
\ICGp_n
\]
is a primitive graph.

\subsection{Induction and restriction of operadic bimodules}\label{sec:indres}

\begin{prop}\label{prop:resind}
Let $\op P\to \op Q$ be a morphism of well-pointed reduced operads (see Section \ref{sec:htpy theory ibimod}). Then the induction and restriction functors
\[
\Ind_{\op P}^{\op Q}: \IBimod_{\op P} \rightleftarrows \IBimod_{\op Q} :\Res^{\op Q}_{\op P}
\]
form a Quillen adjunction, where we equip the categories of infinitesimal bimodules with the Reedy model structure.
\end{prop}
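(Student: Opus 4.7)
My plan is to reduce the Quillen property to a trivial verification at the level of the underlying right $\La$-modules, by exploiting that restriction along an operad morphism preserves the underlying $\La$-structure of an infinitesimal bimodule.

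First, I would check that the adjunction is well-defined at the categorical level: given an infinitesimal $\op Q$-bimodule $M$, the restriction $\Res^{\op Q}_{\op P} M$ has the same underlying right $\Sigma$-module as $M$, equipped with the infinitesimal $\op P$-bimodule structure obtained by pulling both the right and the infinitesimal left action back along $\op P\to\op Q$. The left adjoint $\Ind_{\op P}^{\op Q}$ then exists by the adjoint functor theorem applied to the complete and cocomplete category $\IBimod_{\op Q}$, or, alternatively, by an explicit construction as a coequalizer of relative composition products.

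Next, I would use that, as recalled in Section~\ref{sec:htpy theory ibimod}, the Reedy model structure on $\IBimod_{\op P}$ is transferred, via the free--forgetful adjunction
\[
F_{\op P}\colon\TopCat^{\La^{op}}\rightleftarrows\IBimod_{\op P}\colon U_{\op P},
\]
from the Reedy model structure on $\TopCat^{\La^{op}}$, so that Reedy fibrations and weak equivalences in $\IBimod_{\op P}$ are precisely those maps $f$ for which $U_{\op P}(f)$ is such in $\TopCat^{\La^{op}}$, and similarly for $\op Q$. The crux is then the identity of functors
\[
U_{\op P}\circ\Res^{\op Q}_{\op P}=U_{\op Q}\colon\IBimod_{\op Q}\to\TopCat^{\La^{op}}.
\]
The underlying right $\Sigma$-module is visibly unchanged by restriction, and the right $\La$-action on an infinitesimal bimodule $M$ over a reduced operad $\op R$ is given by the partial compositions $\circ_i\colon M(k)\to M(k-1)$ obtained by plugging the unique point of $\op R(0)=*$ in slot $i$. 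Since any morphism of reduced operads sends the point of $\op P(0)$ to the point of $\op Q(0)$, this $\La$-structure is unaffected by the restriction.

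From this identity it follows immediately that $\Res^{\op Q}_{\op P}$ sends Reedy fibrations (resp.\ Reedy trivial fibrations) in $\IBimod_{\op Q}$ to Reedy fibrations (resp.\ Reedy trivial fibrations) in $\IBimod_{\op P}$, so it is a right Quillen functor and we obtain the desired Quillen adjunction. The only potentially delicate step is the bookkeeping compatibility of the $\La$-action with restriction, but once that identification is observed the remainder of the argument is automatic from the transfer principle; I do not anticipate any substantial obstacle.
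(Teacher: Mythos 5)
Your proposal is correct and follows essentially the same route as the paper: observe that Reedy fibrations and weak equivalences of infinitesimal bimodules are created in $\TopCat^{\La^{op}}$, that $\Res^{\op Q}_{\op P}$ is the identity on underlying right $\La$-modules (a point the paper leaves implicit and you justify via reducedness), and conclude that restriction preserves (trivial) fibrations and is therefore right Quillen. The paper additionally cites the analogous result for right modules in the projective setting (\cite[Theorem 15.B]{Frbook2}) as a precedent, but the argument is the one you give.
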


\begin{proof}
This statement is the counterpart for infinitesimal bimodules and the Reedy model structure of the claim of \cite[Theorem 15.B]{Frbook2} for right modules
and the projective model structure.
The argument of this reference applies without change in our setting.
Namely, we just use that fibrations and weak equivalences of infinitesimal bimodules over operads are created in the category of right $\La$-modules
and that $\Res^{\op Q}_{\op P}$ is the identity at the $\La$-module level
to conclude that the restriction functor $\Res^{\op Q}_{\op P}$ preserves fibrations
and weak equivalences. The result follows.
\end{proof}

We denote the derived functor associated to $\Ind_{\op P}^{\op Q}$ by $L\Ind_{\op P}^{\op Q}$.

Recall that $M_* = M\cup\{\infty\}$ denotes the pointed $m$-manifold defined by adding a point at infinity to our open subset $M\subset\R^m\subset \R^m\cup\{\infty\}=S^m$.
Below, by $M_*^{\times\bullet}$, we denote the right $\Gamma$-module (equivalently the infinitesimal $\Com$-bimodule) that assigns the space of pointed maps $S_*\to M_*$
to any finite pointed set $S_*$. 

\begin{prop}\label{prop:indEM}
We have $L\Ind_{\FM_m}^{\Com}\IF_M\simeq M_*^{\times\bullet}$.
\end{prop}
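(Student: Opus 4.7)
The plan is to reduce the derived induction to a strict one using Reedy cofibrancy, to construct a natural evaluation morphism $\IF_M \to M_*^{\times\bullet}$ of infinitesimal $\FM_m$-bimodules, and then to verify that its adjoint realizes the claimed equivalence.

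First, by Proposition \ref{prop:EMReedyCof} the infinitesimal bimodule $\IF_M$ is Reedy cofibrant in $\IBimod_{\FM_m}$, and by Proposition \ref{prop:resind} the induction functor $\Ind_{\FM_m}^{\Com}$ is left Quillen for the Reedy model structures. Consequently $L\Ind_{\FM_m}^{\Com}\IF_M \simeq \Ind_{\FM_m}^{\Com}\IF_M$, and the problem reduces to computing the strict induction up to weak equivalence.

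Next, I would construct a natural morphism $\phi\colon \IF_M \to \Res^{\Com}_{\FM_m} M_*^{\times\bullet}$ in $\IBimod_{\FM_m}$, where $M_*^{\times\bullet}$ is viewed as an $\FM_m$-bimodule via pullback along the augmentation $\FM_m \to \Com = *$. For a pearled configuration $T \in \IF_M(S)$, each leaf $s \in S$ descends from a unique child of the pearl in the underlying rooted tree, and that pearl-child carries a position in $M_*$ inherited from the pearl's configuration in $M_*^{\times k}$. Setting $\phi(T)(s)$ equal to this position defines a pointed map $S_* \to M_*$, hence an element of $M_*^{\times\bullet}(S_*)$. Compatibility with the operadic structures is routine: the right $\FM_m$-action grafts a subtree at a leaf, distributing its position to all the new leaves, which matches the trivial right $\Com$-action on $M_*^{\times\bullet}$; the infinitesimal left $\FM_m$-action inserts a new internal vertex above the pearl and leaves leaf positions unchanged.

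By adjunction, $\phi$ determines a morphism $\bar\phi\colon \Ind_{\FM_m}^{\Com}\IF_M \to M_*^{\times\bullet}$ in $\IBimod_{\Com}$, and the remaining task is to show $\bar\phi$ is a weak equivalence. The main expected obstacle is making the strict induction sufficiently explicit. Working arity-wise, I would present $\Ind_{\FM_m}^{\Com}\IF_M(S)$ via the standard coend formula for induction of infinitesimal bimodules along an operad map: since $\FM_m$ acts through $\Com = *$ on the target, any two pearled trees in $\IF_M(S)$ that differ only by $\FM_m$-operations at their non-pearl internal vertices become identified. These identifications collapse the entire non-pearl tree substructure, so that the only invariant data is the function $S \to M_*$ recording leaf positions. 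The dense pearl-only stratum of $\IF_M(S)$ surjects, after these identifications, onto the open locus of distinct configurations in $M_*^{|S|}$, while the Fulton--MacPherson boundary strata (encoding collisions and points escaping to infinity in $S^m$) are identified, after induction, with the corresponding collision strata of $M_*^{|S|}$. The core technical step is to verify that this identification is compatible with the quotient topology and yields the claimed weak equivalence onto $M_*^{\times\bullet}(S_*) = M_*^{|S|}$.
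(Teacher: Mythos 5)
Your proof takes the same approach as the paper's, which is itself very terse: Reedy cofibrancy reduces the derived induction to the strict one, and the strict induction is then recognized as $M_*^{\times\bullet}$ by ``contracting all boundary components to points''---precisely your coend analysis of how the $\FM_m$-decorations at non-pearl vertices become identified. One inaccuracy to fix in the description of $\phi$: it is not true that every leaf $s\in S$ descends from a child of the pearl. Leaves lying in the trunk of the pearled tree, between the root and the pearl (the paper's example picture has two such leaves, labelled $1$ and $2$), record points that have escaped to $\infty\in M_*$, and $\phi$ must send those leaves to the basepoint $*$; with that correction $\phi$ is the macroscopic-positions map. As for the ``core technical step'' of checking that the quotient topology on the coend agrees with that of $M_*^{|S|}$, you correctly flag it but do not carry it out; the paper does not carry it out either, so your proposal is on par with the published proof there.
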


\begin{proof}
The object $\IF_M$ is already Reedy cofibrant by Proposition \ref{prop:EMReedyCof}.
Hence $L\Ind_{\FM_m}^{\Com} \IF_M \simeq \Ind_{\FM_m}^{\Com} \IF_M$.
The latter object is obtained by contracting all boundary components to points, and hence we obtain just $M_*^{\times\bullet}$.
\end{proof}

\section{Hopf $\Gamma$-modules and hairy graph complexes}\label{s:hopf}

\subsection{Model structure}\label{ss:hopf_model_str}
Recall that we use the expression `left Hopf $\Gamma$-module' for the category of left $\Gamma$-modules in dg commutative algebras.
In what follows, we similarly call `left Hopf $\Sigma$-modules' the objects of the category of left $\Sigma$-modules
in dg commutative algebras.
To avoid clumsy notation, we will abbreviate the notation of the morphism sets $\Mor_{\DGCA^{\Sigma}}(\dots)$
of the category of left Hopf $\Sigma$-modules $\DGCA^{\Sigma}$
by $\Mor_{H\Sigma}(\dots)$
and we will similarly abbreviate the notation of the morphism sets $\Mor_{\DGCA^{\Gamma}}(\dots)$ of the category of left Hopf $\Gamma$-modules $\DGCA^{\Gamma}$
by $\Mor_{H\Gamma}(\dots)$. We will also use the short notation $\Mor_{\Sigma}$, $\Mor_{\Gamma}$, $\Mor_{\Omega}$ for the morphisms sets of the categories
of left $\Sigma$-modules, of left $\Gamma$-modules and of left $\Omega$-modules
in dg vector spaces.

We endow the category $\HopfColl$ with a model structure by declaring the fibrations and weak equivalences to be objectwise fibrations and weak equivalences, respectively.
We endow the category $\HopfGM$ with the projective model structure.
Note that, in particular, every object in $\HopfGM$ is fibrant since every object in $\HopfColl$ is.


We will need the following statement.

\begin{lemma}\label{lem:simplframe}
For any object $\op M\in\HopfGM$ the simplicial left Hopf $\Gamma$-module $\op M^{\Delta^\bullet} := \op M \otimes \Omega^*(\Delta^\bullet)$ is a simplicial frame for $\op M$.
\end{lemma}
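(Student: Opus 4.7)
The plan is to verify the two standard conditions defining a simplicial frame: first, that the canonical simplicial map $c\op M \to \op M^{\Delta^\bullet}$ from the constant simplicial object on $\op M$ (noting that $\op M^{\Delta^0} = \op M \otimes \Q = \op M$) is a levelwise weak equivalence in $\HopfGM$; and second, that $\op M^{\Delta^\bullet}$ is Reedy fibrant, in the sense that each matching map $\op M^{\Delta^n} \to M_n \op M^{\Delta^\bullet}$ is a fibration. Since weak equivalences and fibrations in the projective model structure on $\HopfGM$ are detected arity-wise in $\DGCA$, both conditions will reduce to pointed-set-by-pointed-set verifications inside the model category of dg commutative algebras.

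For the first condition, at level $n$ and pointed finite set $S_*$, the map in question is $\op M(S_*) \to \op M(S_*) \otimes \Omega^*(\Delta^n)$, induced by the unit $\Q \to \Omega^*(\Delta^n)$ of the PL-de Rham algebra. Since $\Delta^n$ is contractible, this unit is a quasi-isomorphism, and because we work over the field $\Q$, the functor $\op M(S_*) \otimes (-)$ preserves quasi-isomorphisms. Hence the map is a weak equivalence in $\DGCA$, as required.

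For the second condition, the key input will be that the contravariant functor $\Omega^*$ sends colimits of simplicial sets to limits of dg commutative algebras. Applied to the colimit expression of $\partial \Delta^n$ as the union of its codimension-one faces, this yields $\Omega^*(\partial \Delta^n) \cong \lim \Omega^*(\Delta^k)$ over the corresponding finite diagram. Limits in $\HopfGM$ are computed arity-wise, and over the field $\Q$ the functor $\op M(S_*) \otimes (-)$ preserves finite limits; this gives the identification $(M_n \op M^{\Delta^\bullet})(S_*) = \op M(S_*) \otimes \Omega^*(\partial \Delta^n)$. The matching map then becomes $\op M(S_*) \otimes \Omega^*(\Delta^n) \to \op M(S_*) \otimes \Omega^*(\partial \Delta^n)$, induced by restriction of PL forms, which is surjective in every degree; tensoring with $\op M(S_*)$ over $\Q$ preserves this surjectivity, so the matching map is a fibration in $\DGCA$ and hence in $\HopfGM$.

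The only delicate step is the identification of the matching object $(M_n \op M^{\Delta^\bullet})(S_*)$ with $\op M(S_*) \otimes \Omega^*(\partial \Delta^n)$, which amounts to commuting a finite limit past the functor $\op M(S_*) \otimes (-)$; this is automatic over the field $\Q$. The remaining facts, that the PL-de Rham unit $\Q \to \Omega^*(\Delta^n)$ is a quasi-isomorphism and that the restriction $\Omega^*(\Delta^n) \twoheadrightarrow \Omega^*(\partial \Delta^n)$ is surjective in every degree, are classical properties of the Sullivan functor, and no further feature of the $\Gamma$-module structure enters beyond the arity-wise definition of weak equivalences, fibrations, and limits in $\HopfGM$.
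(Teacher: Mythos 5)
Your proposal is correct and follows essentially the same route as the paper's proof: both arguments reduce to the two classical facts that $\Q\to\Omega^*(\Delta^n)$ is a quasi-isomorphism (equivalently $H^*(\Omega^*(\Delta^n))=\Q$) and that $\Omega^*(\Delta^n)\to\Omega^*(\partial\Delta^n)$ is a degree-wise surjection, after identifying the $n$th matching object with $\op M\otimes\Omega^*(\partial\Delta^n)$ and using that weak equivalences and fibrations in $\HopfGM$ are detected arity-wise in $\DGCA$. The only difference is cosmetic: you spell out the matching-object identification (via $\Omega^*$ sending the colimit $\partial\Delta^n$ to a limit, and flatness of $\op M(S_*)$ over $\Q$ to commute the tensor past the finite limit), whereas the paper delegates this point to the reference \cite[Theorem II.7.1.5]{FrII}.
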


\begin{proof}
This result follows from the classical properties of the Sullivan dg algebra $\Omega^*(\Delta^\bullet)$,
namely that we have $H^*(\Omega^*(\Delta^k)) = \Q$, for any simplicial dimension $k\geq 0$,
and that the morphism $\Omega^*(\Delta^\bullet)\rightarrow\Omega^*(\partial\Delta^\bullet)$
is surjective degree-wise (see for instance~\cite[Theorem II.7.1.5]{FrII} for an account of the applications of the Sullivan dg algebra
for the definition of simplicial frames in the category of dg commutative algebras).
The first acyclicity claim implies that the morphism $\op M = \op M^{\Delta^0}\to\op M^{\Delta^k}$
is a quasi-isomorphism,
while the latter claim implies that the matching morphism $\op M^{\Delta^k}\to\op M^{\p\Delta^k}$
is a fibration,
because the $k$th matching object $M_k(\op M^{\Delta^\bullet})$ of the simplicial left Hopf $\Gamma$-module $\op M^{\Delta^\bullet}$
is identified with the tensor product $M^{\p\Delta^k} = \op M \otimes\Omega^*(\p\Delta^\bullet)$ (see \emph{loc. cit.}
for the proof of an analogous identity in the category of dg commutative algebras).
\end{proof}

\subsection{Hairy graph complex }\label{sec:HGC}

\begin{lemma}\label{lem:preHGC}
Suppose that $\op M$ is a left Hopf $\Gamma$-module.
Then there is a bijection
\[
\Mor_{H\Gamma}(\Graphs_n^o, \op M^o)\cong\Mor_{\Sigma}(\ICGp_n^o,\cro\op M^o),
\]
where the superscript $(-)^o$ means that we consider objects equipped with a zero differential, $\cro$ is the cross-effect functor (see Section \ref{sec:gammaomega})
and $\ICGp_n$ is the $\Sigma$-module of the primitive internally connected graphs as in Section \ref{sec:graphs}.
\end{lemma}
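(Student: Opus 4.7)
The plan is to chain together three categorical identifications, corresponding to peeling off the algebra structure, passing to the cross-effect, and finally restricting to primitive generators.

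First I would verify that, as a left Hopf $\Gamma$-module (with zero differential), $\Graphs_n^o$ is the free symmetric algebra on the left $\Gamma$-module $\ICG_n^o$, i.e.\ $\Graphs_n^o \cong S(\ICG_n^o)$, where $S(-)$ denotes the symmetric algebra functor applied arity-wise. In each arity $r$ we already have $\Graphs_n(r) = S(\ICG_n(r))$ as a graded commutative algebra by Section~\ref{sec:graphs}, so the only thing to check is that the left $\Gamma$-action described in Section~\ref{sec:graph-Gamma-module} is multiplicative. This is immediate from its description: for a pointed map $f\colon S_*\to T_*$ and a product $\gamma_1\cdot\gamma_2$ (glued along external vertices), an edge at some $v_s$ with $s\in f^{-1}(*)$ in the product is an edge at $v_s$ in $\gamma_1$ or in $\gamma_2$, so both $f_*(\gamma_1\cdot\gamma_2)$ and $f_*(\gamma_1)\cdot f_*(\gamma_2)$ vanish; when no such edge exists, the discarding and merging of external vertices in $\gamma_1\cdot\gamma_2$ factors term-wise, giving $f_*(\gamma_1\cdot\gamma_2)=f_*(\gamma_1)\cdot f_*(\gamma_2)$. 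Hence the identification $\Graphs_n^o\cong S(\ICG_n^o)$ respects all structures.

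By the free-forgetful adjunction between left Hopf $\Gamma$-modules and left $\Gamma$-modules in graded vector spaces I then obtain
\[
\Mor_{H\Gamma}(\Graphs_n^o, \op M^o) \cong \Mor_{\Gamma}(\ICG_n^o, \op M^o),
\]
where on the right side $\op M^o$ is viewed just as a left $\Gamma$-module in graded vector spaces.

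Next I would apply Pirashvili's theorem (Proposition~\ref{prop:crosseffect}), which identifies left $\Gamma$-modules in (graded) vector spaces with left $\Omega$-modules via the cross-effect functor $\cro$, yielding
\[
\Mor_{\Gamma}(\ICG_n^o, \op M^o) \cong \Mor_{\Omega}(\cro\ICG_n^o, \cro\op M^o).
\]
Finally I would invoke the freeness statement recorded at the end of Section~\ref{sec:graph-Gamma-module}: the left $\Omega$-module $\cro\ICG_n$ is free on the left $\Sigma$-module $\ICGp_n\subset\cro\ICG_n$ of primitive internally connected graphs (those in which every external vertex has valence exactly one). Concretely, any internally connected graph $\gamma$ with no isolated external vertex is obtained from a unique primitive graph by applying an $\Omega$-morphism that duplicates external vertices according to their valence, so a morphism of $\Omega$-modules out of $\cro\ICG_n$ is determined by its restriction to $\ICGp_n$. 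This gives
\[
\Mor_{\Omega}(\cro\ICG_n^o, \cro\op M^o) \cong \Mor_{\Sigma}(\ICGp_n^o, \cro\op M^o),
\]
and composing the three bijections gives the claim.

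The only genuinely non-formal step is the first one: establishing that the externally defined $\Gamma$-action on $\Graphs_n$ agrees with the one induced by freely extending the $\Gamma$-action on $\ICG_n$ through the symmetric algebra functor. Once this compatibility is in hand, the rest is a straightforward concatenation of categorical facts already stated in the paper (Pirashvili's equivalence and the freeness of $\cro\ICG_n$ as an $\Omega$-module).
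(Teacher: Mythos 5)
Your proposal is correct and follows exactly the same three-step factorization as the paper's proof: freeness of $\Graphs_n$ as a Hopf $\Gamma$-module on $\ICG_n$, Pirashvili's cross-effect equivalence, and freeness of $\cro\ICG_n$ as a left $\Omega$-module on $\ICGp_n$. The only difference is that you spell out in more detail why the $\Gamma$-action on $\Graphs_n$ is multiplicative (the paper simply asserts freeness), which is a useful sanity check but not a different argument.
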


\begin{proof}
Since $\Graphs_n^o$ is free as a left Hopf $\Sigma$-module, generated by the internally connected graphs $\ICG_n$, we have that
\beq{equ:li1}
\Mor_{H\Gamma}(\Graphs_n^o, \op M^o)
=
\Mor_{\Gamma}(\ICG_n^o, \op M^o).
\eeq
By Proposition \ref{prop:crosseffect} the cross effect functor induces an equivalence of categories between left $\Gamma$-modules and left $\Omega$-modules in any abelian category.
We hence find
\beq{equ:li2}
\Mor_{\Gamma}(\ICG_n^o, \op M^o)= \Mor_{\Omega}(\cro\ICG_n^o, \cro\op M^o).
\eeq
Now $\cro\ICG_n^o$ is a free left $\Omega$-module, generated by the $\Sigma$-module $\ICGp_n$, so that
\beq{equ:li3}
\Mor_{\Omega}(\cro\ICG_n^o, \cro\op M^o) = \Mor_{\Sigma}(\ICGp_n^o, \cro\op M^o).
\eeq
\end{proof}

We introduce the notation
\[
\HGC_{\op M,n} := \iHom_{\Sigma}(\ICGp_n^o, \cro\op M^o)
\]
and call $\HGC_{\op M,n}$ the space of $\op M$-decorated hairy graphs. Here $\iHom_{\Sigma}(-,-)$
denotes the graded vector space of homogeneous homomorphisms
of left $\Sigma$-modules.

\begin{prop}\label{prop:HGCLinfty}
The space $\HGC_{\op M,n}$ is equipped with an $L_\infty$-structure $\{\ell_k\}_{k\geq 1}$ (functorially in $\op M$),
with differential $\delta = \ell_1$ and higher $L_\infty$-operations $\ell_k$, $k\geq 2$,
together with a descending complete filtration $\HGC_{\op M,n}= \mF^1\HGC_{\op M,n}\supset \mF^2\HGC_{\op M,n}\supset \cdots$
that is compatible with the $L_\infty$-structure in the sense that
\begin{equation}\label{equ:Linfty filtration compat}
    \ell_k(\mF^{p_1}\HGC_{\op M,n},\dots, \mF^{p_k}\HGC_{\op M,n})\subset \mF^{p_1+\dots+p_k}\HGC_{\op M,n}.
\end{equation}
The Maurer--Cartan elements of $\HGC_{\op M,n}$ are in one-to-one correspondence with the morphisms of left Hopf $\Gamma$-modules $\Graphs_n\to\op M$.
\end{prop}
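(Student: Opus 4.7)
The plan is to build the $L_\infty$-structure on $\HGC_{\op M,n}$ as a convolution-type structure that records precisely when a graded Hopf $\Gamma$-module morphism $\tilde\alpha : \Graphs_n^o \to \op M^o$, parametrized by a degree-zero element $\alpha\in\HGC_{\op M,n}^0$ via Lemma~\ref{lem:preHGC}, intertwines the two differentials. Since $\Graphs_n^o$ is free as a graded commutative algebra on $\ICG_n$, and the cross-effect $\cro\ICG_n$ is in turn free as a left $\Omega$-module on $\ICGp_n$, this compatibility need only be checked on the primitive generators $\ICGp_n$, and the resulting identity will be, by design, the Maurer--Cartan equation.

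To define the operations, I decompose the edge-contraction differential of $\Graphs_n$ acting on a primitive graph $\Gamma$ into two kinds of contributions. Contractions of internal-internal edges produce new primitive graphs, which contribute the operation $\delta_{split}$ of \eqref{equ:deltasplit}. Contractions of an edge joining an external vertex to an internal vertex produce a graph with one external vertex of valence $\geq 2$; evaluating $\tilde\alpha$ on such a graph, via the Hopf $\Gamma$-module structure of $\op M$ and Pirashvili's equivalence (Proposition~\ref{prop:crosseffect}), forces a multiplication of decorations carried by primitive-graph inputs. When these decorations come from a single primitive input one obtains $\delta_{join}$ of \eqref{equ:deltajoin}; when $k$ primitive inputs simultaneously contribute hairs to a common new internal vertex one obtains the higher bracket $\ell_k$ pictured in \eqref{equ:bracketpic}. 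Adding the piece $d_A$ induced by the internal differential of $\op M$ gives $\ell_1 = \delta$ of \eqref{equ:delta}, and the higher $\ell_k$ are defined by the same recipe.

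For the filtration, I take $\mF^p\HGC_{\op M,n}$ to be the subspace of $\Sigma$-module maps vanishing on primitive graphs with fewer than $p$ internal vertices. Every operation $\delta_{split}$, $\delta_{join}$ and $\ell_k$ for $k\geq 2$ introduces at least one new internal vertex, while $d_A$ leaves the count fixed; therefore $\ell_k(\mF^{p_1},\ldots,\mF^{p_k})\subset \mF^{p_1+\cdots+p_k+1}\subset \mF^{p_1+\cdots+p_k}$, which verifies \eqref{equ:Linfty filtration compat}. Since $\HGC_{\op M,n}$ is a product over internal-vertex counts and $\mF^p$ is the tail product, the filtration is exhaustive and complete. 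The $L_\infty$-identities $\sum\pm\ell_i\circ\ell_j = 0$ then follow from $d_{\Graphs_n}^2=0$ together with the associativity/commutativity of the product on $\op M$ and its compatibility with $d_{\op M}$, via the standard convolution principle. The Maurer--Cartan correspondence is finally tautological: the equation $\sum_k \tfrac{1}{k!}\ell_k(\alpha,\ldots,\alpha) = 0$ is, by construction, literally $d_{\op M}\tilde\alpha = \tilde\alpha\, d_{\Graphs_n}$ evaluated on $\ICGp_n$, hence equivalent to $\tilde\alpha$ being a morphism of dg Hopf $\Gamma$-modules.

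The main obstacle is the combinatorial bookkeeping of signs and of the higher brackets: for each $k$ one must match every non-primitive graph produced by an external-edge contraction of $d_{\Graphs_n}$ with all of its possible $k$-fold decompositions as products of primitive graphs with joined hairs, while keeping track of the $n$-orientations. A clean way to organize this is to set up a convolution formalism for the pair consisting of the dg Hopf cooperad-like object $\Graphs_n$ on the source and the dg Hopf $\Gamma$-module $\op M$ on the target, and to invoke the general theory for the $L_\infty$-axioms; the remaining verifications then reduce to a direct translation of the convolution formulas into the language of hairy graphs.
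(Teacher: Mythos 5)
The overall strategy — reading the MC equation as the condition that a graded Hopf $\Gamma$-module morphism $\Graphs_n^o\to\op M^o$ intertwine the differentials, and defining $\ell_k$ as the components of the commutator $[d,-]$ restricted to generators and projected to cogenerators — is exactly the right one and matches the paper. However, your proposed filtration has a real flaw, and this is the one structural ingredient the proposition explicitly asserts, so the gap is not cosmetic.

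You filter by the number of \emph{internal vertices}, declaring $\mF^p$ to be the maps vanishing on primitive graphs with fewer than $p$ internal vertices. But $\ICGp_n$ contains the single-edge graph joining two external vertices of valence one, which has \emph{zero} internal vertices; hence with your indexing $\mF^1$ is a proper subspace of $\HGC_{\op M,n}$, contradicting the required equality $\HGC_{\op M,n}=\mF^1\HGC_{\op M,n}$. If you shift the index so that $\mF^1$ is everything (say $\mF^p$ is maps vanishing on graphs with $<p-1$ internal vertices), the compatibility fails: each $\ell_k$ adds exactly one internal vertex, so $\ell_k$ applied to $k$ elements of $\mF^1$ produces something lying only in $\mF^{3}$ (one new vertex), not in $\mF^{k}$, and \eqref{equ:Linfty filtration compat} breaks already for $k=3$. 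Relatedly, your claim that $\ell_k(\mF^{p_1},\ldots,\mF^{p_k})\subset\mF^{p_1+\cdots+p_k+1}$ is false for $k=1$, since the $d_A$ piece of $\ell_1$ preserves the vertex count. The paper instead filters by the number of \emph{edges}: every primitive internally connected graph has at least one edge, so $\mF^1=\HGC_{\op M,n}$, and joining $k$ inputs with $\geq p_i$ edges each adds one new edge, landing in $\mF^{p_1+\cdots+p_k+1}$. The finiteness of the set of graphs with a given number of edges is also used in the paper to identify $\ggR\hotimes\HGC_{\op M,n}$ with the $\ggR$-linear $\iHom$, a step your filtration by internal vertices would not support (arbitrarily many multi-edges keep the vertex count fixed).

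On the verification of the $L_\infty$-identities, your appeal to a ``standard convolution principle'' is in spirit what the paper does, but the paper makes it precise by packaging the operations as a function $\mU^{\ggR}(x)=\pi\circ(d_{\op M}\circ\Phi(x)-\Phi(x)\circ d_{\Graphs_n})\circ\iota$ for a varying graded ring $\ggR$, and checking the single identity $\mU^{\ggR[\epsilon]}(x+\epsilon\,\mU^{\ggR}(x))=\mU^{\ggR[\epsilon]}(x)$; this sidesteps all sign bookkeeping and immediately gives the MC correspondence. You would need to supply something of comparable precision; your recognition that the signs are the hard part is correct, but the proposal does not actually close that gap.
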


We recall the explicit definition of a Maurer--Cartan element in a complete $L_\infty$-algebra later on (in Section~\ref{ss:MC}),
when we tackle examples of applications of our result.
For the moment, we only need a conceptual definition of $L_\infty$-structures and of Maurer--Cartan elements
in terms of zeros of vector fields on graded affine schemes that we shall quickly recall, see for instance \cite[Section 4]{FW2}.
Consider an $L_\infty$-algebra $\fg$ with differential $\delta = \ell_1$ and higher $L_\infty$-operations $\ell_k$, $k\geq 2$,
that is equipped with a complete descending filtration
\[
\fg = \mF^1\fg \supset \mF^2\fg \supset \cdots
\]
compatible with the $L_\infty$-operations as in \eqref{equ:Linfty filtration compat}.
Then for any graded commutative algebra $\ggR$ we may consider the completed tensor product $\ggR\hotimes \fg$,
which is again a filtered complete $L_\infty$-algebra by $\ggR$-linear extension of the operations.
We may then consider the functions
\begin{equation}\label{equ:UR from Linfty}
    \begin{aligned}
        \mU^{\ggR} : (\ggR\hotimes \fg)_0 \to (\ggR\hotimes \fg)_{-1} \\
        x\mapsto \sum_{k\geq 1} \frac 1 {k!} \ell_k(x,\dots,x).
        \end{aligned}
\end{equation}
We can recover the operations $\ell_k$, $k\geq 1$, by graded polarization from these functions, for a varying $\ggR$ (see \cite[Section 4]{FW2}).
We may conversely define a sequence of operations $\ell_k$, $k\geq 1$, by providing the function $\mU^{\ggR}$,
for each graded commutative algebra $\ggR$,
as long as we can verify that this function has a power series expansion as above,
with terms $\ell_k$ being $\ggR$-linear extensions of multilinear functions defined over $\Q$.
We moreover get that the structure relations of $L_\infty$-algebras for the operations $\ell_k$
are then equivalent to the
relation
\begin{equation}\label{equ:UMC}
    \mU^{\ggR[\epsilon]}(x+\epsilon\mU^{\ggR}(x)) = \mU^{\ggR[\epsilon]}(x),
\end{equation}
for the power series $\mU^{\ggR}$, for any graded commutative algebra $\ggR$, any element $x\in(\ggR\hotimes\fg)_0$,
and where $\epsilon$ is a formal variable of degree $+1$.
Let us note that with, this approach, we define the differential of our $L_{\infty}$-algebra as the linear term $\delta = \ell_1$
of our power series $\mU^{\ggR}$, and the above relation \eqref{equ:UMC} integrates the relation of differential $\delta^2 = 0$
when we focus on the linear term.
This representation of the structure of $L_\infty$-algebras has the advantage of avoiding signs.
Geometrically, and as it is used in previous literature, one interprets $\fg$ as a pro-algebraic graded variety with functor of points $\ggR \to (\ggR\hotimes \fg)_0$.
Then the power series $\mU^{\ggR}$ encodes a vector field $Q$ on this graded variety of degree -1, and the $L_\infty$-relations \eqref{equ:UMC} state that $Q^2=0$, i.e., that $Q$ is a homological vector field. That said, we will never use this latter algebro-geometric interpretation, and the reader may safely ignore it.

\begin{proof}[Proof of Proposition \ref{prop:HGCLinfty}]
We equip $\HGC_{\op M,n} = \iHom_{\Sigma}(\ICGp_n^o, \cro\op M^o)$ with the descending complete filtration inherited from the filtration on $\ICGp_n$ by the number of edges in graphs.
That is, $\mF^p\HGC_{\op M,n}$ consists of those homomorphisms that vanish on all graphs that have fewer than $p$ edges.
Now consider a graded algebra $\ggR$ and note that
$$
\ggR\hotimes \HGC_{\op M,n} \cong
\iHom_{\Sigma/\ggR}(\ggR\otimes \ICGp_n^o,\ggR\otimes \cro\op M^o),
$$
where we take $\ggR$-linear homomorphisms, i.e., we extend our ground ring to $\ggR$. For this isomorphism we also use that there are only finitely many graphs with a given number of edges.

Lemma \ref{lem:preHGC} naturally extends to $\ggR$-coefficients to give us an isomorphism
\[
    (\ggR\hotimes \HGC_{\op M,n})_0 \cong \Mor_{\Sigma/\ggR}(\ggR\otimes \ICGp_n^o,\ggR\otimes \cro\op M^o) \xrightarrow{\Phi}
    \Mor_{H\Gamma/\ggR}(\ggR\otimes\Graphs_n^o, \ggR\otimes\op M^o).
\]
Recall also that the inverse map is given by restriction to generators and projection to cogenerators. Explicitly, the inverse sends a morphism $F$ on the right-hand side to $\pi\circ F\circ \iota$, where $\iota:\ICGp_n\to \Graphs_n$ is the natural inclusion and $\pi:\op M\to \cro \op M$ is the projection, hiding $\ggR$-linear extension from the notation.

We then define the function $\mU^{\ggR}: (\ggR\hotimes \HGC_{\op M,n})_0 \to (\ggR\hotimes \HGC_{\op M,n})_{-1}$ to be
\begin{equation}\label{equ:URdef}
    \mU^{\ggR}(x) := \pi \circ (d_{\op M} \circ \Phi(x)-\Phi(x)\circ d_{\Graphs_n}) \circ \iota.
\end{equation}
For this proof we shall also abbreviate the commutator with the differentials appearing in this formula to
\[
[d,(-)] :=   d_{\op M} \circ (-) - (-) \circ d_{\Graphs_n}.
\]
Let us first verify the $L_\infty$-relations in the form \eqref{equ:UMC}.
To this end we first note that for any morphism $F\in \Mor_{H\Gamma/\ggR}(\ggR\otimes\Graphs_n^o, \ggR\otimes\op M^o)$ the combination
\[
F + \epsilon [d,F]
:
\ggR[\epsilon]\otimes\Graphs_n^o \to \ggR[\epsilon]\otimes\op M^o
\]
is a morphism of Hopf $\Gamma$-modules over $\ggR[\epsilon]$, where $\epsilon$ is again a formal variable of degree $+1$.
Note also that for any morphism $\tilde F:\ggR[\epsilon]\otimes\Graphs_n^o \to \ggR[\epsilon]\otimes\op M^o$ of Hopf $\Gamma$-modules we have
\[
\Phi(\pi \circ \tilde F\circ \iota)
=
\tilde F
\]
and hence
\[
\mU^{\ggR[\epsilon]}(\pi \circ \tilde F\circ \iota)
=
\pi \circ [d,\tilde F] \circ \iota.
\]
We use this to verify \eqref{equ:UMC} as follows:
\begin{align*}
    \mU^{\ggR[\epsilon]}(x+\epsilon\mU^{\ggR}(x))
    &=
    \mU^{\ggR[\epsilon]}(\pi \circ \Phi(x) \circ \iota +\epsilon \pi \circ [d,\Phi(x)]\circ \iota)
    \\
    &=
    \mU^{\ggR[\epsilon]}\left(\pi \circ \left(
         \Phi(x)+ \epsilon [d,\Phi(x)]
         \right) \circ \iota \right)
    \\
    &=
    \pi \circ \left(
        [d,\Phi(x)]  + \underbrace{[d,\epsilon [d, \Phi(x)]]}_{=0}
    \right) \circ \iota
    = \mU^{\ggR[\epsilon]}(x).
\end{align*}
For the final simplification we used that the differentials square to zero.

Furthermore, note that $\mU^{\ggR}(x)$ is obtained by composing copies of $x$ with $\ggR$-linear extensions of the structure maps (product, $\Gamma$ structure and differential) of $\op M$ and $\Graphs_n$. Hence it is clear that $\mU^{\ggR}(x)$ is indeed of the form \eqref{equ:UR from Linfty}, obtained using the $\ggR$-linear extensions of $k$-linear maps $\ell_k$ on $\HGC_{\op M,n}$, and hence we obtain indeed an $L_\infty$-structure on $\HGC_{\op M,n}$.
Furthermore, all structure maps used in the definition of $\ell_k$ either leave the number of edges of graphs in $\Graphs_n$ constant, or decrease the number of edges. Hence the compatibility \eqref{equ:Linfty filtration compat} follows.

Finally, by very definition the Maurer-Cartan elements in $\ggR\hotimes \HGC_{\op M,n}$ are those satisfying $\mU^{\ggR}(x)=0$, and hence precisely those elements that correspond under the isomorphism $\Phi$ to left Hopf $\Gamma$-module maps $\ggR\otimes \Graphs_n \to \ggR\otimes \op M$.


\end{proof}

As a corollary we find the following result.

\begin{prop}\label{prop:graphscofibrant}
The left Hopf $\Gamma$-module $\Graphs_n$ (see Section \ref{sec:graphs}) is cofibrant in the projective model structure.
\end{prop}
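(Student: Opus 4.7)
The strategy is to use Proposition \ref{prop:HGCLinfty} to translate the lifting problem defining cofibrancy into a Maurer--Cartan lifting problem. Recall that an acyclic fibration in $\HopfGM$ is an objectwise surjective quasi-isomorphism $f\colon \op M \twoheadrightarrow \op N$. By the natural bijection
\[
\Mor_{H\Gamma}(\Graphs_n, \op M) \;\cong\; \MC(\HGC_{\op M, n})
\]
from Proposition \ref{prop:HGCLinfty}, cofibrancy of $\Graphs_n$ is equivalent to the assertion that $f_*\colon \MC(\HGC_{\op M,n}) \to \MC(\HGC_{\op N,n})$ is surjective for every such $f$.

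To prove that this holds, the plan is first to show that the induced morphism of complete filtered $L_\infty$-algebras
\[
f_*\colon \HGC_{\op M,n} \lo \HGC_{\op N,n}
\]
is surjective and a quasi-isomorphism. Surjectivity follows from the definition $\HGC_{-,n} = \iHom_{\Sigma}(\ICGp_n^o,\cro(-)^o)$: the cross-effect functor is exact and $\ICGp_n^{(p)}$, the $\Sigma$-module of primitive internally connected graphs with exactly $p$ edges, is levelwise finite-dimensional. For the quasi-isomorphism claim, I would pass to the associated graded of the edge filtration. Since the combinatorial piece $\delta_{\Graphs_n}$ of the differential strictly decreases the number of edges and only the internal differential of $\op M$ preserves it, the induced differential on $\gr^p\HGC_{\op M,n}$ is the one coming from $d_{\op M}$. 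The quasi-isomorphism $\cro f\colon \cro\op M \to \cro\op N$ (cross-effect is exact and preserves quasi-isomorphisms) together with finite-dimensionality of $\ICGp_n^{(p)}$ then yields that each $\gr^p f_*$ is a quasi-isomorphism. Since the filtration is complete and the transition maps of the tower $\{\ker f_*/\mF^{p+1}\}$ are surjective (as $f_*$ itself is), a standard Mittag--Leffler/tower argument upgrades this to acyclicity of $\ker f_*$, and hence to $f_*$ being a quasi-isomorphism.

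With this in hand, the surjectivity of $f_*\colon \MC(\HGC_{\op M,n}) \to \MC(\HGC_{\op N,n})$ follows by the classical inductive lifting of Maurer--Cartan elements along a surjective quasi-isomorphism of complete filtered $L_\infty$-algebras. Given $y \in \MC(\HGC_{\op N,n})$ I would construct approximations $x^{(p)} \in \HGC_{\op M,n}$, with $f_*(x^{(p)}) \equiv y \pmod{\mF^{p+1}}$ and $\mU(x^{(p)}) \in \mF^{p+1}$, proceeding inductively. Surjectivity of $f_*$ produces an arbitrary lift modulo $\mF^{p+2}$; its Maurer--Cartan defect lies in $\mF^{p+1}\cap \ker f_*$ and is automatically a cocycle modulo $\mF^{p+2}$ by the compatibility \eqref{equ:Linfty filtration compat} of the $L_\infty$-operations with the filtration together with the MC relation at the previous stage. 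Acyclicity of $\ker f_*$ (on the associated graded, and hence on the quotients $\ker f_*/\mF^{p+2}$) lets one absorb this defect by modifying the lift within the kernel. Passing to the limit $x := \lim_p x^{(p)}$, which exists by completeness of the filtration on $\HGC_{\op M,n}$, produces the required MC element with $f_*(x) = y$.

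The main obstacle is the last step: one must verify carefully that the Maurer--Cartan obstruction at stage $p$ is indeed a cocycle in $\gr^{p+1}\ker f_*$ for the linear differential $\ell_1$, so that the acyclicity of $\ker f_*$ can be used to eliminate it. This is where the filtration compatibility \eqref{equ:Linfty filtration compat} and the generalized Bianchi identity $\mU(x+\epsilon\mU(x)) = \mU(x)$ (formula \eqref{equ:UMC}) of Proposition \ref{prop:HGCLinfty} come in, controlling the contribution of the higher $L_\infty$-operations $\ell_k$ for $k\geq 2$ to the obstruction.
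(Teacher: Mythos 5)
Your proposal is correct and follows essentially the same route as the paper: invoke Proposition \ref{prop:HGCLinfty} to translate the lifting problem against an acyclic fibration into a Maurer--Cartan lifting problem along $\HGC_{\mA,n}\to\HGC_{\mB,n}$, observe that this map respects the complete edge filtration and becomes a surjective quasi-isomorphism on associated gradeds, and conclude that MC lifting is unobstructed. The only real difference is that the paper records the obstruction-theory step in one sentence (the obstructions live in the homology of the kernel of the surjective quasi-isomorphism $\gr^p\HGC_{\mA,n}\to\gr^p\HGC_{\mB,n}$, which vanishes), whereas you spell out the inductive approximation $x^{(p)}$ argument in full.
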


\begin{proof}
Recall that $\Com^c$ denotes the dual cooperad of the operad of commutative algebras in vector spaces (see Section~\ref{sec:graph-Gamma-module}).
This cooperad $\Com^c$ is identified with the initial object of the category of left Hopf $\Gamma$-modules.
Let the following lifting problem be given.
\[
\begin{tikzcd}
\Com^c \ar{d} \ar{r} & \mA \ar[twoheadrightarrow]{d}{\sim} \\
\Graphs_n \ar{r} & \mB
\end{tikzcd}\,.
\]
By the preceding proposition this lifting problem is equivalent to the problem of lifting some Maurer--Cartan element $m\in \HGC_{\mB,n}$ to a Maurer--Cartan element $m'\in \HGC_{\mA,n}$ along the morphism of $L_\infty$-algebras
\[
\HGC_{\mA,n}\to \HGC_{\mB,n}.
\]
However, this morphism is compatible with the descending complete filtrations on both sides, and the associated graded morphism is a surjection and a quasi-isomorphism. Hence the lifting is unobstructed.
(The obstructions lie in the homology of the kernels of the surjective quasi-isomorphisms $\gr^p\HGC_{\mA,n}
\xrightarrow{\sim} \gr^p\HGC_{\mB,n}$.)
\end{proof}

\subsection{Explicit combinatorial formulas for the $L_\infty$-structure on the hairy graph complex}\label{sec:HGCLie}
The definition of the $L_\infty$-structure on $\HGC_{\op M, n}$ in the proof of Proposition \ref{prop:HGCLinfty} is concise and in particular takes care of all prefactors and signs.
However, it is not very explicit.
In this section we work out the explicit combinatorial form of the operations $\ell_k$, modulo signs and prefactors,
in the special case $\op M=R^{\otimes \bullet}$, where $R$ is an augmented dg commutative algebra.
The left Hopf $\Gamma$-module structure of this object $\op M=R^{\otimes \bullet}$
is determined by the commutative algebra structure of $R$
and we have $\cro\op M=\bar R^{\otimes\bullet}$.
For this $\op M$ we denote the graph complex alternatively by
\[
\HGC_{\bar R,n} := \HGC_{\op M,n}.
\]
It will be advantageous to identify
\[
\HGC_{\bar R,n}
=
\iHom_{\Sigma}(\ICGp_n, \bar R^{\bullet})
\cong
\prod_{r\geq 1}
\ICGp_n(r)^* \hotimes_{\Sigma_r} \bar R^{\otimes r}
=: \ICGp_n^* \hotimes_{\Sigma} \bar R^{\bullet},
\]
where we complete the tensor product with respect to the filtration by number of edges. We shall think of elements of the dual space $\ICGp_n^*$ of $\ICGp_n$ also as graphs. We can hence depict elements of $\HGC_{\bar R,n}$ as series of graphs with external legs which are decorated by elements of $\bar R$.
\[
\begin{tikzpicture}[scale=.8,baseline=-.65ex]
\node[int] (v1) at (0,0){};
\node[int] (v2) at (0,1){};
\node[int] (v3) at (1,0){};
\node[int] (v4) at (1,1){};
\node (w1) at (-1,-1) {$a_1$};
\node (w2) at (2,-1) {$a_2$};
\node (w3) at (2,2) {$a_3$};
\draw (v1)  edge (v2) edge (v3) edge (v4) edge (w1) (v2) edge (v4) (v3) edge (v2) edge (v4) (v4) edge (w3) (v3) edge (w2);
\end{tikzpicture}
\, , \quad\quad
a_1,a_2,a_3\in \bar R.
\]

Using this picture, let us describe combinatorially the $L_\infty$-structure on the graph complex $\HGC_{\bar R,n}$, by tracing the construction of the proof of Proposition~\ref{prop:HGCLinfty}.
Concretely, let us compute the $L_\infty$-operation
\[
\ell_k(\Gamma_1,\dots,\Gamma_k),
\]
for $\Gamma_1,\dots,\Gamma_k\in \HGC_{\bar R,n}$ graphs with $\bar R$-decorated legs.
To this end we have to compute the $\epsilon_1\cdots \epsilon_k$ coefficient in the series (see \eqref{equ:URdef})
\[
\mU^{\Q[\epsilon_1,\dots, \epsilon_k]}(\underbrace{\epsilon_1\Gamma_1 + \cdots + \epsilon_k \Gamma_k}_{=: \Gamma}),
\]
where $\epsilon_j$ is a variable of degree negative to the degree of $\Gamma_j$. Let us also write $\ggR:=\Q[\epsilon_1,\dots,\epsilon_n]$.
Recall also that the linear term of our series $\mU^{\ggR}$, which corresponds to the case $k=1$ of this computation,
collects the terms of the differential of our $L_{\infty}$-algebra.

We first have to extend our $\Gamma$ to a map
\begin{multline*}
\Gamma' \in \Mor_{\Omega/\ggR}(\ggR\otimes \cro\ICG_n^o, \ggR\otimes\cro\op M^o)\subset \\
 \Mor_{\Sigma/\ggR}(\ggR\otimes\cro\ICG_n^o, \ggR\otimes\cro\op M^o)\cong \ggR\hotimes \cro \ICG_n^* \hat \otimes_{\Sigma} \bar R^\bullet
\end{multline*}
according to \eqref{equ:li3}.
Note that elements of the right-hand side can again be considered as graphs, but now with multiple external vertices that are $\bar R$-decorated, and that can have valency $\geq 1$.
Tracing the construction one can then see that the desired element $\Gamma'$ is combinatorially obtained by summing over all ways of fusing arbitrary subsets of hairs to external vertices, multiplying the decorations, as indicated in the following picture.
\[
\begin{tikzpicture}[baseline=-.8ex]
\node[draw,circle] (v) at (0,.3) {$\Gamma_j$};
\node[ext,label=-90:{$a_1$}] (w1) at (-.7,-.5) {};
\node[ext,label=-90:{$a_2$}] (w2) at (-.25,-.5) {};
\node[ext,label=-90:{$\dots$}] (w3) at (.25,-.5) {};
\node[ext,label=-90:{$a_k$}] (w4) at (.7,-.5) {};
\draw (v) edge (w1) edge (w2) edge (w3) edge (w4);
\end{tikzpicture}
\mapsto
\sum \pm
\begin{tikzpicture}[baseline=-.8ex]
\node[draw,circle] (v) at (0,.3) {$\Gamma_j$};
\node[ext,label=-90:{$a_1a_2$}] (w1) at (-.7,-.5) {};
\node[ext,label=-90:{$\dots$}] (w2) at (-.25,-.5) {};
\node[ext,label=-90:{$\dots$}] (w4) at (.7,-.5) {};
\draw (v) edge (w1) edge[bend right] (w1) edge (w2) edge[bend left] (w4) edge[bend right] (w4) edge (w4);
\end{tikzpicture}
\]
Next we have to take $\Gamma'$ and extend it further to a map
\[
\Gamma'' \in \Mor_{\Gamma/\ggR}(\ggR\otimes \ICG_n^o, \ggR\otimes \op M^o) \cong \ggR\hotimes \ICG_n^* \hat \otimes_{\Sigma} R^{\bullet}
\]
according to \eqref{equ:li2}.
Graphically, this just amounts to summing over all ways of inserting zero-valent external vertices in our graphs, decorated by the unit element $1\in R$.
\[
    \begin{tikzpicture}[baseline=-.8ex]
        \node[draw,circle] (v) at (0,.3) {$\Gamma_j$};
        \node[ext,label=-90:{$a_1a_2$}] (w1) at (-.7,-.5) {};
        \node[ext,label=-90:{$\dots$}] (w2) at (-.25,-.5) {};
        \node[ext,label=-90:{$\dots$}] (w4) at (.7,-.5) {};
        \draw (v) edge (w1) edge[bend right] (w1) edge (w2) edge[bend left] (w4) edge[bend right] (w4) edge (w4);
        \end{tikzpicture}
        \mapsto
\sum
\begin{tikzpicture}[baseline=-.8ex]
\node[draw,circle] (v) at (0,.3) {$\Gamma_j$};
\node[ext,label=-90:{$\scriptstyle 1$}] at (-1.05,-.5) {};
\node[ext,label=-90:{$\scriptstyle  a_1a_2$}] (w1) at (-.7,-.5) {};
\node[ext,label=-90:{$\dots$}] (w2) at (-.25,-.5) {};
\node[ext,label=-90:{$\scriptstyle 1$}] at (.25,-.5) {};
\node[ext,label=-90:{$\dots$}] (w4) at (.7,-.5) {};
\node[ext,label=-90:{$\scriptstyle 1$}] at (1.2,-.5) {};
\node[ext,label=-90:{$\scriptstyle 1$}] at (1.45,-.5) {};
\draw (v) edge (w1) edge[bend right] (w1) edge (w2) edge[bend left] (w4) edge[bend right] (w4) edge (w4);
\end{tikzpicture}
\]
Finally we have to work out the identification \eqref{equ:li1} to obtain an element
\begin{multline}\label{equ:FinMor}
    F \in \Mor_{H\Gamma/\ggR}(\ggR\otimes\Graphs_n^o,\ggR\otimes \op M^o) \subset \\
    \Mor_{\Sigma}(\ggR\otimes\Graphs_n^o, \ggR\otimes\op M^o) \subset \ggR\hotimes\Graphs_n^*\hat \otimes R^{\otimes\bullet}.
\end{multline}
Given an element $\gamma=\gamma_1\cdots \gamma_r\in \Graphs_n$ that decomposes into $r$ internally connected components, our desired map $F$ acts as $F(\gamma)=\Gamma''(\gamma_1)\cdots \Gamma''(\gamma_r)$.
Graphically this means that $F$, as an element of the very right-hand side of \eqref{equ:FinMor}, is obtained by fusing
several copies of $\Gamma''$ at the external vertices, thus taking formally an "exponential" of $\Gamma''$. Here is a schematic picture of the graphs produced.
\[
\sum
\begin{tikzpicture}[baseline=-.8ex]
\node[draw,circle] (v) at (0,.3) {$\Gamma_1$};
\node[draw,circle] (v1) at (1.25,.3) {$\Gamma_2$};
\node[draw,circle] (v2) at (2.5,.3) {$\Gamma_3$};
\node[ext,label=-90:{$\scriptstyle 1$}] at (-1.05,-.5) {};
\node[ext,label=-90:{$\scriptstyle  a_1a_2$}] (w1) at (-.7,-.5) {};
\node[ext,label=-90:{$\dots$}] (w2) at (-.25,-.5) {};
\node[ext,label=-90:{$\scriptstyle 1$}] (w5) at (.25,-.5) {};
\node[ext,label=-90:{$\dots$}] (w4) at (.7,-.5) {};
\node[ext,label=-90:{$\scriptstyle a$}] (w6) at (1.2,-.5) {};
\node[ext,label=-90:{$\scriptstyle 1$}] (w7) at (1.45,-.5) {};
\node[ext,label=-90:{$\scriptstyle a$}] (w8) at (1.75,-.5) {};
\node[ext,label=-90:{$\scriptstyle a$}] (w9) at (2.05,-.5) {};
\draw (v) edge (w1) edge[bend right] (w1) edge (w2) edge[bend left] (w4) edge[bend right] (w4) edge (w4)
(v2) edge (w9) edge[bend left] (w9) (v1) edge (w4) edge (w6) edge (w9) edge (w8);
\end{tikzpicture}
\]
To the element $F$ we then need to apply the differential, given by the differentials on source and target. Afterwards we restrict the map again to the generators $\ICGp$ and project to $\bar R^{\otimes\bullet}$.
Note in particular that the restriction to $\ICGp$ is the same as the projection $\Graphs_n^*\to \ICGp^*$ that discards all graphs that are not internally connected or have external vertices of valency $\neq 1$.
Furthermore, recall that we only need to keep the coefficient of $\epsilon_1\cdots \epsilon_k$, or equivalently only those terms in which each $\Gamma_j$ appears exactly once.
Terms in $F$ that contribute non-trivially to the end result $\ell_k(\Gamma_1,\dots,\Gamma_k)$ thus can easily be seen to have either of two forms.
\begin{enumerate}
\item\label{description_L_infinity_operations_HGC}
All external vertices have valence exactly one, except for one, which has higher valence.
Then the differential on $\Graphs_n^*$ splits off all edges on the unique vertex to make it into a valence one vertex as well.
\[
\begin{tikzpicture}[baseline=-.8ex]
\node[draw,circle] (v) at (0,.3) {$\Gamma_1$};
\node[draw,circle] (v1) at (1.25,.3) {$\Gamma_2$};
\node[ext,label=-90:{$\scriptstyle  a$}] (w1) at (-.7,-.5) {};
\node[ext,label=-90:{$\scriptstyle a$}] (w2) at (-.25,-.5) {};
\node[ext,label=-90:{$\scriptstyle aaa$}] (w4) at (.7,-.5) {};
\node[ext,label=-90:{$\scriptstyle a$}] (w6) at (1.2,-.5) {};
\node[ext,label=-90:{$\scriptstyle a$}] (w7) at (1.5,-.5) {};
\draw (v) edge (w1)  edge (w2) edge[bend left] (w4) edge[bend right] (w4) edge (w4)
(v1) edge (w4) edge (w6) edge (w7);
\end{tikzpicture}
\mapsto
\begin{tikzpicture}[baseline=-.8ex]
\node[draw,circle] (v) at (0,.3) {$\Gamma_1$};
\node[draw,circle] (v1) at (1.25,.3) {$\Gamma_2$};
\node[ext,label=-90:{$\scriptstyle  a$}] (w1) at (-.7,-.5) {};
\node[ext,label=-90:{$\scriptstyle a$}] (w2) at (-.25,-.5) {};
\node[ext,label=-90:{$\scriptstyle aaa$}] (w4b) at (.7,-.5) {};
\node[int] (w4) at (.7,-.2) {};
\node[ext,label=-90:{$\scriptstyle a$}] (w6) at (1.2,-.5) {};
\node[ext,label=-90:{$\scriptstyle a$}] (w7) at (1.5,-.5) {};
\draw (v) edge (w1)  edge (w2) edge[bend left] (w4) edge[bend right] (w4) edge (w4)
(v1) edge (w4) edge (w6) edge (w7) (w4b) edge (w4);
\end{tikzpicture}
\]
The pieces thus produced contribute to both the differential and higher $L_\infty$-operations as depicted in \eqref{equ:deltajoin}, \eqref{equ:bracketpic}.
\item\label{description_differential_HGC}
The graph is internally connected (in particular $k=1$) and all vertices are already of valence one.
Then there are contributing pieces of the differential on $\Graphs_n^*$ splitting internal vertices, producing \eqref{equ:deltasplit}, and also another piece by the differential on $\bar R$.
\end{enumerate}
In the case of the computation of $\ell_1$, we retrieve the terms of the differential of $\HGC_{\bar R,n}$ depicted in the introduction
of the paper. Namely, in~(\ref{description_L_infinity_operations_HGC}), we retrieve the term $\delta_{join}$
of this differential $\delta  = d_R + \delta_{split} + \delta_{join}$,
whereas~(\ref{description_differential_HGC}) gives the term $\delta_{split}$
and the term $d_R$.
%
For the higher operations $\ell_k$, for $k\geq 2$, we retrieve the picture
given in the introduction \eqref{equ:bracketpic}
in the particular case $k=2$.
Indeed, from~(\ref{description_L_infinity_operations_HGC}), we obtain that the operations $\ell_k$, $k\geq 2$,
take $k$ graphs and glue a non-empty subset of hairs of each graph to a newly created vertex with a hair,
which we decorate by the product of the decorations of the fused hairs.


In the sequel we use the following notation.
\begin{notation}\label{not:HGC}
For a non-unital dg commutative algebra $A$, we define
\[
\HGC_{A,n} := \HGC_{\bar A_*,n},
\]
where $A_* = \Q1\oplus A$ is obtained by adjoining a unit to $A$, so that $A = \bar A_*$.
\end{notation}

\subsection{Hairy graph complex for homotopy commutative algebras}\label{sec:homotopy_commutative_extension}\label{ss:HGC_homot}
The $L_\infty$-algebra $\HGC_{\bar R,n}$ of the previous subsection depends on the choice of a model $R$ for $M_*$, which can be relatively big in general.
In this section, we improve the situation by extending the construction $\HGC_{\bar R,n}$ to homotopy commutative algebras.
This allows us to take for $R$ the cohomology $H^*(M_*)$ of the space $M_*$,
which we equip with a homotopy commutative algebra structure that we can transfer from the Sullivan algebra $\Omega^*(M_*)$
using perturbation techniques. We refer to Kadeishvili's memoir~\cite{KadeishviliMemoir} for the application of this idea
in rational homotopy theory (see also~\cite{KadeishviliSurvey} for a survey).

We actually modify Kadeishvili's definition of this model of the rational homotopy of a space on the cohomology.
Namely, Kadeishvili uses $C_{\infty}$-algebras (called commutative $A_{\infty}$-algebras in the first cited reference),
which are identified with algebras over the operadic cobar construction of the (operadic suspension
of the) Lie cooperad $\Omega\Lie^c$ (see~\cite{GetzJones}, see also~\cite[Section 13.1]{LV} for an account of this correspondence).
But the transfer argument used by Kadeishvili to provide the cohomology $\bar H^*(M_*)$ with a homotopy commutative algebra structure
can also be applied to the cobar construction $\Omega L_{\infty}^c$,
where, instead of the Lie cooperad $\Lie^c$,
we consider the cooperad of $L_{\infty}$-coalgebras $L_{\infty}^c$.

In what follows, we use the notation  $\Lie$ for the operadic desuspension of the usual Lie operad
in the category of graded vector spaces,
so that the structure of an algebra over this operad $\Lie$
is governed by a Lie bracket of degree $1$,
as we assume in our grading conventions (see Section~\ref{ss:gen}).
Recall also that the operad that governs the category of $L_{\infty}$-algebras
is identified with the operadic cobar construction $L_{\infty} = \Omega\Com_+^c$
of the cooperad of cocommutative non-counital coalgebras $\Com_+^c$
(when we adopt our grading conventions for $L_{\infty}$-algebra structures again).

This operad $L_{\infty} = \Omega\Com_+^c$ defines a resolution of the Lie operad $\Lie$,
with a quasi-isomorphism $L_{\infty} = \Omega\Com_+^c\xrightarrow{\sim}\Lie$
deduced from the Koszul duality of operads (see again~\cite{GetzJones} or~\cite[Section 13.2]{LV}).
The cooperads $\Lie^c$ and $L_{\infty}^c$, which we considered in the previous paragraph,
are dual to the operads $\Lie$ and $L_{\infty} = \Omega\Com_+^c$
in the category of (differential) graded vector spaces.
We also have $L_{\infty}^c = B\Com_+$, where we take the operadic bar construction $B$ of the operad of non-unital commutative algebras $\Com_+$.
We can dualize the quasi-isomorphism $L_{\infty}\xrightarrow{\sim}\Lie$ to get a quasi-isomorphism
of dg cooperads $\Lie^c\xrightarrow{\sim}L_{\infty}^c$.
This quasi-isomorphism induces a quasi-isomorphism of dg operads when we pass to the cobar construction $\Omega\Lie^c\xrightarrow{\sim}\Omega L_{\infty}^c$
and we also have a quasi-isomorphism of dg operads $\Omega L_{\infty}^c\xrightarrow{\sim}\nuCom$
by the operadic bar duality (see loc. cit.).
Hence, both dg operads $\Omega\Lie^c$ and $\Omega L_{\infty}^c$
are identified with resolutions
of the operad of non-unital commutative algebras $\nuCom$.

We consider the category of algebras over the cobar construction of the cooperad of $L_{\infty}$-coalgebras $\Omega L_{\infty}^c$
as a model for the category of homotopy commutative algebras
rather than the category of algebras over the dg operad $C_{\infty} = \Omega\Lie^c$
because we have an explicit formula for the $L_{\infty}$-structure of the hairy graph complex in this context,
whereas we only have a theoretical result asserting the existence of such a structure in the case of algebras
over the dg operad $C_{\infty} = \Omega\Lie^c$.
We just give brief explanations on this theoretical result at the end of the section.

For the moment, we can assume that $\COp$ is any dg cooperad among $\Lie^c$ and $L_{\infty}^c$.
Briefly recall that the structure of an algebra $A$ over the cobar construction of a dg cooperad $\Omega\COp$
is equivalent to the structure defined by a twisting coderivation $D_A$
on the cofree $\COp$-coalgebra $\COp(A)$,
where we call twisting coderivation a map $D_A: \COp(A)\rightarrow\COp(A)$,
of (cohomological) degree $1$, which is a coderivation with respect to the operations of the $\COp$-coalgebra structure on $\COp(A)$,
which is trivial on the dg vector space $A\subset\COp(A)$,
and which satisfies the relation $\delta D_A + D_A\delta + D_A^2 = 0$
with respect to the natural differential $\delta$ induced by the internal differential of the cooperad $\COp$ and of the dg vector space $A$
on the cofree $\COp$-coalgebra $\COp(A)$.
This twisting coderivation can also be determined by a map $\rho_A: \COp(A)\rightarrow A$, of degree $1$,
and which is also trivial on the dg vector space $A\subset\COp(A)$.
Furthermore, we can express the equation of twisting coderivations $\delta D_A + D_A\delta + D_A^2 = 0$
in terms of this map (see~\cite{FrCobar} for a detailed account of this correspondence).
(The assumption that $\rho_A$ vanishes on the dg vector space $A\subset\COp(A)$
reflects our convention that we integrate the unary operation
of a homotopy commutative algebra structure
in the differential $d_A$
of the dg vector space that underlies our object $A$.)

The vector spaces $L_{\infty}^c(r) = B\nuCom(r)$ are spanned by rooted trees $T$ with $r$ leaves indexed by $1,\dots,r$.
The degree of a tree $T$ with $p$ internal vertices is given by $-p$ (in cohomological conventions).
The map $\rho_A: \COp(A)\rightarrow A$, which determines the structure of an $\Omega L_{\infty}^c$-algebra on $A$,
can therefore be defined by giving a collection of maps
\[
\rho_A^T : A^{\otimes r}\rightarrow A,
\]
associated to such trees $T$, and such that $\rho_A^T(a_{\sigma(1)},\dots,a_{\sigma(r)}) = \rho_A^{\sigma T}(a_1,\dots,a_r)$ for every permutation $\sigma$,
where $\sigma T$ is defined by applying the permutation $\sigma$
to the indices of the leaves
of our tree.
We just take $\rho_A^{|} = 0$ for a trivial tree without vertices $T = |$, because this operation $\rho_A^{|}$
represents the value of the map $\rho_A: \COp(A)\rightarrow A$
on the summand $A\subset\COp(A)$ (which is trivial under our conventions).
We assume that $\rho_A^T$ has degree $1-p$ (for a tree with $p$ vertices) and the relations of an $\Omega L_{\infty}^c$-algebra structure,
equivalent to the equation of twisting coderivations for the coderivation $D_A$ corresponding to $\rho_A$,
are equivalent to the relations
\[
d_A\rho_A^T + \pm\rho_A^T d_A + \sum_{e}\rho_A^{T/e} + \sum_{T = S'\circ_{i_e} S''}\rho_A^{S'}\circ_{i_e}\rho_A^{S''} = 0,
\]
for the operations associated to the trees $T$,
where $d_A$ denotes the internal differential of the dg vector space $A$ (which acts by derivation in the case of the tensor product $A^{\otimes r}$),
the first sum runs over edge contraction operations $T\mapsto T/e$,
and the second sum runs over the operadic decompositions $T = S'\circ_{i_e} S''$
in the category of trees.
The sign $\pm$ corresponds to the permutation of the map $d_A$ of degree $1$ and of the operation $\rho_A^T$ of degree $1-p$,
and is determined by the general conventions of differential graded algebra.
Recall that the operadic composite $S'\circ_{i_e} S''$ of trees $S'$ and $S''$ is obtained by plugging the tree $S''$
into the leave of the tree $S'$ indexed by $i_e$.
In our sum, we actually consider decompositions $T = S'\circ_{i_e} S''$
where we can perform a shuffle of the indices of the ``free'' leaves of $S'$ and $S''$
inside the tree $T$.
The index $i_e$ is a dummy variable, because we actually assume that our sum runs over isomorphism classes
of such decompositions $T = S'\circ_{i_e} S''$.
The operation $\rho_A^{S'}\circ_{i_e}\rho_A^{S''}$ is given by $\rho_A^{S'}\circ_{i_e}\rho_A^{S''}(a_1,\dots,a_r) = \pm\rho_A^{S'}(a_{i_1},\dots,\rho_A^{S''}(a_{j_1},\dots,a_{j_{\ell}}),\dots,a_{i_k})$,
for all $a_1,\dots,a_r\in A$, where $(i_1,\dots,\widehat{i_e},\dots,i_k)$ and $(j_1,\dots,j_{\ell})$
reflects the shuffle of the indices of the ``free'' leaves of the trees $S'$ and $S''$
inside $T$. (The sign $\pm$ corresponds to the permutation of variables involved in this formula
and is again determined by the general conventions of differential graded algebra.)

Every (non-unital) dg commutative algebra is an $\Omega L_{\infty}^c$-algebra by restriction of structure
through the operad morphism $\Omega L_{\infty}^c\rightarrow\nuCom$.
The $\Omega L_{\infty}^c$-algebra structure of a dg commutative algebra
can also be determined by the following formula:
\[
\rho_A^T(a_1,\dots,a_r) = \begin{cases}
0, & \text{if $T$ has more than 1 vertex}, \\
a_1\cdots a_r, & \text{otherwise}.
\end{cases}
\]
By general results of the homotopy theory of algebras over operads (for which we refer to~\cite{Hinich}),
the category of $\Omega L_{\infty}^c$-algebras
is endowed with a natural model structure (like the category of non-unital dg commutative algebras),
which is transferred from the category of dg vector spaces.
Besides, we obtain that the functors of extension of structure and of restriction of structure
associated to the quasi-isomorphism $\Omega L_{\infty}^c\rightarrow\nuCom$
define a Quillen equivalence.
In particular, every $\Omega L_{\infty}^c$-algebra is quasi-isomorphic to non-unital dg commutative algebra.


We now describe our definition of an $L_\infty$-algebra structure on $\HGC_{A,n}$ when $A$ is an $\Omega L_{\infty}^c$-algebra.
We modify the definition of the component $\delta_{join}$ of the differential $\delta  = d_A + \delta_{split} + \delta_{join}$
by taking:
\[
\delta_{join}
\begin{tikzpicture}[baseline=-.8ex]
\node[draw,circle] (v) at (0,.3) {$\Gamma$};
\node (w1) at (-.7,-.5) {$a_1$};
\node (w2) at (-.25,-.5) {$a_2$};
\node (w3) at (.25,-.5) {$\dots$};
\node (w4) at (.7,-.5) {$a_k$};
\draw (v) edge (w1) edge (w2) edge (w3) edge (w4);
\end{tikzpicture}
= \sum_T\sum_{\substack{S\subset {\rm hairs} \\ |S|\geq 2 }} \pm
\begin{tikzpicture}[baseline=-.8ex]
\node[draw,circle] (v) at (0,.3) {$\Gamma$};
\node (w1) at (-.7,-.5) {$a_1$};
\node (w2) at (-.25,-.5) {$\dots$};
\node[draw, circle, dotted] (T) at (.5,-.66) {$T$};
\node (w4) at (.5,-1.66) {$\rho_A^T(\{a_i\}_{i\in S})$};
\draw (v) edge (w1) edge (w2) edge[bend left] (T) edge (T) edge[bend right] (T) (w4) edge (T);
\end{tikzpicture}\,.
\]
The variable $T$ runs over the elements of a tree basis of $L_{\infty}^c = B\nuCom$.
The variable $S$ runs over the subsets of hairs of the graph $\Gamma$.
The hairs of $S$ are plugged in the leaves of the tree $T$ and the root of that tree becomes a new hair decorated by $\rho_A^T$
applied to the respective decorations of the hairs in $S$.
In this process, we just discard the tree $T = |$ (and we therefore assume that our hair set $S$ has at least two elements)
since we have $\rho_A^{|} = 0$.

For graphs $\Gamma_1,\dots,\Gamma_r\in \HGC_{A,n}$ with $k_1,\dots,k_r$ hairs,
decorated by elements $a_1^i,\dots,a_{k_1}^i\in A$, $i = 1,\dots,r$,
we define the higher $r$-ary $L_\infty$-operations $\ell_r$, $r\geq 2$,
by the following similar formulas:
\[
\ell_r(\Gamma_1,\dots,\Gamma_r)
=
\sum_T
\sum_S
\pm
\begin{tikzpicture}[baseline=-.8ex]
  \node[draw,circle] (v1) at (0,.3) {$\Gamma_1$};
  \node[draw,circle] (v2) at (1.25,.3) {$\Gamma_2$};
  \node (vd) at (2.5,.3) {$\cdots$};
  \node[draw,circle] (vr) at (3.75,.3) {$\Gamma_r$};
  \node[draw, circle, dotted] (T) at (2.5,-1) {$T$};
  \node (fT) at (2.5,-2) {$\rho_A^T(\{a_i\}_{i\in S})$};
  \draw (T) edge (fT) (v1) edge (T) edge[bend right] (T)
  (v2) edge (T) (vr) edge (T) (v1) edge +(-1,-1) (vr) edge +(0,-1) edge +(1,-1);
  \end{tikzpicture}\, .
\]
The variable $T$ runs again over the elements of a tree basis of $L_{\infty}^c = B\nuCom$.
The variable $S$ runs over the subsets of hairs of the graphs $\Gamma_1,\dots,\Gamma_r$
such that each graph contributes by at least one hair.

The validity of this construction is easy to check from our explicit description of the structure of an $\Omega L_{\infty}^c$-algebra.
We also immediately see that this construction is functorial with respect to morphisms of $\Omega L_{\infty}^c$-algebras
and that a quasi-isomorphism of $\Omega L_{\infty}^c$-algebras induces a quasi-isomorphism
on the hairy graph complex by a standard spectral sequence argument.
Hence our construction provides a coherent generalization of the decorated graph complex of commutative algebras.
Note simply that we deal with non-unital algebras when we consider the category of $\Omega L_{\infty}^c$-algebras,
which are therefore equivalent to the augmentation ideal $\bar R$ of augmented unital dg commutative algebras $R$
rather than to the (augmented) unital dg commutative algebras
that we considered so far.

In the context of homotopy commutative algebras, we also have a class of homotopy morphisms (also called $\infty$-morphisms in the literature),
which are equivalent to morphisms of the homotopy category.
In general, a homotopy morphism of algebras over the cobar construction $\Omega\COp$ of a dg cooperad $\COp$,
can be defined by giving a morphism of $\COp$-coalgebras $F: (\COp(A),D_A)\rightarrow(\COp(B),D_B)$,
where we consider the twisted cofree $\COp$-coalgebras $(\COp(A),D_A)$ and $(\COp(B),D_B)$
associated to the $\Omega\COp$-algebra structures on $A$ and $B$ (see \cite{FrCobar} for an account of the correspondence
between this class of homotopy morphisms
and the morphisms of the homotopy category of $\Omega\COp$-algebras).
Such a morphism can also be determined by a map $\psi: \COp(A)\rightarrow B$, of degree $0$, and which satisfies coherence constraints
which are equivalent to the preservation of differentials
when we pass to the morphism of $\COp$-coalgebras $F: (\COp(A),D_A)\rightarrow(\COp(B),D_B)$ (see again \cite{FrCobar} for an account of this correspondence).
Note that we do not take the convention that the map $\psi: \COp(A)\rightarrow B$
vanishes on the summand $A\subset\COp(A)$
in our definition of homotopy morphisms.
This linear component of our map can actually be identified with a morphism of dg vector spaces $f: A\rightarrow B$
which underlies our homotopy morphism
of $\Omega\COp$-algebras.

In the case $\COp = L_{\infty}^c$, the map $\psi: \COp(A)\rightarrow B$,
which determines a homotopy morphism of $\Omega L_{\infty}^c$-algebras,
can be defined by giving a collection of maps
\[
\psi_T : A^{\otimes r}\rightarrow B,
\]
associated to the trees $T$,
and such that we again have the equivariance relation $\psi_T(a_{\sigma(1)},\dots,a_{\sigma(r)}) = \psi_{\sigma T}(a_1,\dots,a_r)$
for every permutation $\sigma$.
Note that the linear part of $\psi$, thus the morphism of dg vector spaces $f: A\rightarrow B$
underlying $\psi$,
is encoded by the map $\psi_{|}: A\to B$ associated to the trivial tree $T = |$.
We assume that $\psi_T$ has degree $-p$ (for a tree with $p$ internal vertices) and the coherence constraints of homotopy morphisms
is equivalent to the relations
\[
d_B\psi_T - \pm\psi_T d_A + \sum_{T = S'(S''_1,\dots,S''_m)}\rho_B^{S'}(\psi_{S''_1},\dots,\psi_{S''_m})
= \sum_{e}\psi_{T/e} + \sum_{T = S'\circ_{i_e} S''} \psi_{S'}\circ_{i_e}\rho_A^{S''}
\]
where $d_A$ (respectively, $d_B$) denotes the internal differential of the dg vector space $A$ (respectively, $B$),
and $\rho_A^S$ (respectively, $\rho_B^S$) denotes the operations that determine the $\Omega L_{\infty}^c$-algebra structure
of $A$ (respectively, $B$).
The first sum runs over the operadic tree decompositions $T = S'(S''_1,\dots,S''_m)$, where $S'$ is a tree with $m$ leaves in which we plug the trees $S''_1,\dots,S''_m$.
We also assume that we can perform a shuffle of the indices of the leaves of the trees $S''_1,\dots,S''_m$
inside $T$.
The map $\rho_B^{S'}(\psi_{S''_1},\dots,\psi_{S''_m})$
is given by $\rho_B^{S'}(\psi_{S''_1},\dots,\psi_{S''_m})(a_1,\dots,a_r) = \pm\rho_B^{S'}(\psi_{S''_1}(a_{i_1^1},\dots,a_{i_{k_1}^1}),\dots,\psi_{S''_1}(a_{i_1^m},\dots,a_{i_{k_m}^m}))$,
for all $a_1,\dots,a_r\in A$, where $(i_1^1,\dots,i_{k_1}^1,\dots,i_1^m,\dots,i_{k_m}^m)$ reflects
again the shuffle of the indices of the leaves of the trees $S''_1,\dots,S''_m$
inside $T$.

To any such homotopy morphism, we associate the $L_\infty$-morphism
\[
\Psi \colon \HGC_{A,n}\to \HGC_{B,n}
\]
whose $r$-ary component acts on graphs schematically as follows
\[
\Psi_r(\Gamma_1,\dots,\Gamma_r)
=
\sum
\pm
\begin{tikzpicture}[baseline=-.8ex]
  \node[draw,circle] (v1) at (0,.3) {$\Gamma_1$};
  \node[draw,circle] (v2) at (1.25,.3) {$\Gamma_2$};
  \node (vd) at (2.5,.3) {$\cdots$};
  \node[draw,circle] (vr) at (3.75,.3) {$\Gamma_r$};
  \node[draw, circle, dotted] (Tk) at (3,-1) {$T_k$};
  \node (vdv) at (1.75,-1) {$\cdots$};
  \node[draw, circle, dotted] (T1) at (.5,-1) {$T_1$};
  \node (fT1) at (.5,-2) {$\psi_{T_1}(\dots)$};
  \node (fTk) at (3,-2) {$\psi_{T_k}(\dots)$};
  \draw (T1) edge (fT1) (Tk) edge (fTk)
  (v1) edge (T1) edge[bend right] (T1)
  (v2) edge (T1) (v2) edge (Tk)
  (vr) edge (Tk) edge[bend left] (Tk)   ;
  \end{tikzpicture}
\]
Here we sum over all ways of connecting the hairs to a forest. Each hair of every graph $\Gamma_i$ must go into some tree $T_j$
(which can be  the trivial tree~$|$).
The new hair at the root of the tree $T_j$ in the forest is decorated by $\psi_{T_j}$ applied to the decorations of the hairs the tree connects to. Note that
each map $\Psi_r$ has degree zero.

In the case of the cohomology $H^*(X)$ of a space $X$, the $\Omega L_{\infty}^c$-algebra that makes $H^*(X)$ a model
of the rational homotopy of the space $X$
is obtained by transfer, after picking a quasi-isomorphism of dg vector spaces $H^*(X)\xrightarrow{\sim}\Omega^*(X)$
where we regard $H^*(X)$ as a dg vector space equipped with a trivial differential.
The transferred structure can be defined by applying perturbation methods to the coalgebras (as in~\cite{KadeishviliMemoir}
and in~\cite{CL})
or by model category arguments (see~\cite{BM}
and \cite{FrTransfer}). In all cases, the obtained object $H^*(X)$ is connected to $\Omega^*(X)$
by a zigzag of quasi-isomorphisms of $\Omega L_{\infty}^c$-algebras,
and therefore, we can take this model in our applications to graph complexes.
Note simply that we take the augmentation ideal of the cohomology algebra for the pointed space $X = M_*$
rather than the cohomology algebra itself in these applications.

We mentioned at the beginning of this subsection that we can also lift the definition of the decorated hairy graph complex
to the classical category of $C_{\infty}$-algebras (at least theoretically).
We can obtain such a result by observing that we can define a morphism $\Omega L_{\infty}^c\rightarrow\Omega\Lie^c$
by lifting the quasi-isomorphism $\Omega\Lie^c\xrightarrow{\sim}\nuCom$,
using that $\Omega L_{\infty}^c$ forms a cofibrant object in the category of dg operads.
Indeed, this observation implies that every $C_{\infty}$-algebra $A$ inherits an $\Omega L_{\infty}^c$-algebra structure by restriction of structure,
and as a result, we can use the construction of this paragraph to associate a decorated hairy graph complex $\HGC_{A,n}$
to $A$.

We finally note that to compute the set of equivalence classes of MC elements,
it is enough to know a $C_\infty$ structure of~$A$.
In other words, one does not need to produce its lift to an $\Omega L_{\infty}^c$ structure, see Remark~\ref{r:UTT}.

\section{Proof of the main Theorems}\label{sec:the proofs}

\subsection{Proof of Theorem \ref{thm:main1}}\label{ss:proof_thm_main1}
Our goal is to compute the homotopy type of the mapping space
\[
\IBimod^h_{\FM_m}(\IF_M, \FM_n^\Q)=\IBimod^h_{\FM_m}(\IF_M,(\Res^{\FM_n^\Q}_{\FM_m}\FM_n^\Q))
\]
Here we consider the category of infinitesimal bimodules as equipped with the Reedy model structure. (Recall
that the homotopy type of the derived mapping space between two objects is the same for the projective and Reedy model structures.)
Let us transform the mapping space as follows, using diagram \eqref{equ:formality diag}
\begin{multline*}
\IBimod^h_{\FM_m}(\IF_M, \Res^{\FM_n^\Q}_{\FM_m}\FM_n^\Q)
\simeq\IBimod^h_{\FM_m}(\IF_M, \Res^{R_n^\Q}_{\FM_m}R_n^\Q)
\simeq \\
\IBimod^h_{\FM_m}(\IF_M, \Res^{H_n}_{\FM_m}H_n)
\simeq\IBimod^h_{\FM_m}(\IF_M, \Res^{\Com}_{\FM_m}\Res^{H_n}_{\Com}H_n).
\end{multline*}
Here we used that weak equivalences of infinitesimal bimodules induce weak equivalences on the derived mapping space.
For the final equality we used that the lower composition in \eqref{equ:formality diag}
factorizes through the commutative operad $\Com = *$.

We now use the adjunction between induction and restriction (Proposition \ref{prop:resind}) to obtain the following extra simplification:
\[
\IBimod^h_{\FM_m}(\IF_M, \Res^{\Com}_{\FM_m}\Res^{H_n}_{\Com}H_n)
\cong
\IBimod^h_{\Com}(L\Ind_{\FM_m}^{\Com} \IF_M, \Res^{H_n}_{\Com}H_n).
\]
We finally use Proposition \ref{prop:indEM} to find that
\[
\IBimod^h_{\FM_m}(\IF_M, (\Res^{\FM_n}_{\FM_m}\FM_n)^\Q)
\simeq
\IBimod^h_{\Com}(M_*^{\times\bullet}, \Res^{H_n}_{\Com}H_n).
\]
Now, infinitesimal $\Com$-bimodules are the same as topological right $\Gamma$-modules. We can hence write the result of this relation as
\[
\IBimod^h_{\Com}(M_*^{\times\bullet},\Res^{H_n}_{\Com}H_n) = \TopCat^{\Gamma^{op},h}(M_*^{\times\bullet},H_n).
\]

Recall that the operad $H_n$ is given by $H_n = LG(H^*(\FM_n))$, where we consider the derived functor of the Sullivan realization functor on dg commutative algebras $G$,
and that this identity $H_n = LG(H^*(\FM_n))$ also holds in the category of right $\Gamma$-modules.
From the observations of Section~\ref{sec:gamma_rational_homotopy},
we can simplify our result further to
\begin{multline*}
\TopCat^{\Gamma^{op},h}(M_*^{\times\bullet},LG(H^*(\FM_n)))
\simeq
\DGCA^{\Gamma,h}(H^*(\FM_n),\Omega^*(M_*^{\times\bullet}))
\simeq \\
\DGCA^{\Gamma,h}(H^*(\FM_n),R^{\otimes\bullet}),
\end{multline*}
where in the last relation we pick an (augmented) Sullivan model $R$ of the space~$M_*$.

To compute the derived mapping space we use the graph complex model
\[
\Graphs_n\simeq H^*(\FM_n)
\]
from Section \ref{sec:graphs}.
By Proposition \ref{prop:graphscofibrant} we have that $\Graphs_n$ is cofibrant as a left Hopf $\Gamma$-module (with respect to the projective model structure).
Furthermore $R^{\otimes\bullet}$ is fibrant since so is any object in the projective model structure.
We can hence write
\[
\DGCA^{\Gamma,h}(H^*(\FM_n),R^{\otimes\bullet})\simeq\DGCA^{\Gamma}(\Graphs_n,R^{\otimes\bullet})
\]
to compute the mapping space we use the simplicial frame from Lemma \ref{lem:simplframe}.
By combining this expression with the result of Proposition~\ref{prop:HGCLinfty}, we obtain:
\begin{align*}
\DGCA^{\Gamma}(\Graphs_n, R^{\otimes\bullet}) & = \Mor_{H\Gamma}(\Graphs_n,  R^{\otimes\bullet}\otimes \Omega^*(\Delta^\bullet))\\
& = \MC(\Def(\Graphs_n,R^{\otimes\bullet}\otimes\Omega^*(\Delta^\bullet)))\\
& = \MC(\Def(\Graphs_n,R^{\otimes\bullet})\hat\otimes\Omega^*(\Delta^\bullet))
= \MC_\bullet(\HGC_{\bar R,n}).	
\end{align*}

%
%

\subsection{Proof of Theorem \ref{thm:FM}}\label{ss:proof_thm_FM}
Our next goal is to prove that the mapping spaces $\IBimod^h_{\FM_m}(\IF_M,\FM_n)$ are nilpotent under the assumptions of Theorem \ref{thm:FM},
that the rationalization of these mapping spaces are equivalent to the mapping spaces $\IBimod^h_{\FM_m}(\IF_M,\FM_n^{\Q})$
with values in the rationalization of the infinitesimal bimodule $\FM_n$,
and that the rationalization map is finite-to-one
on homotopy classes.

We rely on Mienn\'e's theory of Postnikov decompositions of operads and of infinitesimal bimodules over operads,
as we briefly explained in the introduction
of the paper.
We refer to~\cite{MienneThesis} for a detailed account of this theory in the context of operads empty in arity zero
(the category of non-unitary operads in the terminology of loc. cit.).
The extension of the theory to reduced operads (called unitary operads in loc. cit.)
and to infinitesimal bimodules
is the subject of the memoir~\cite{MienneMemoir} in preparation.
The theory is very similar in this case, using concepts introduced in~\cite{FrII} for the study of the homotopy of reduced operads
and in~\cite{DFT} for the study of the homotopy of infinitesimal bimodules.
Notably, we use that every reduced operad $\op P$ has a decomposition ${\op P} = \lim_s\cosk^{\Lambda}_s{\op P}$
such that
\[
\Mor_{\Op}(-,\cosk_{\leq s}\op P) = \Mor_{\Op_{\leq s}}(-,\op P),
\]
where $\Op_{\leq s}$ denotes the category of $s$-truncated (reduced) operads,
the category of operads that are defined
up to arity $\leq s$,
and we consider the image of the object $\op P$ under the obvious forgetful functor $(-)_{\leq s}: \Op\rightarrow\Op_{\leq s}$.


The coskeletal decomposition  ${\op P} = \lim_s\cosk^{\Lambda}_s{\op P}$ was introduced in
 \cite[Section II.8.4]{FrII} in the context of reduced operads and was used in~\cite{DFT}  in the context of infinitesimal bimodules. In both settings $\cosk^{\Lambda}_s{\op P}$ is determined,  as a symmetric sequence, 
 by the $\Lambda$-structure of ${\op P}$:
 \[
\cosk^{\Lambda}_s{\op P}(n) = \lim_{{\substack u\in\Mor_{\Lambda}(\underline{r},\underline{n})}\atop {r\leq s}}{\op P}(r),
\]
In what follows, we mainly use that we have $\cosk^{\Lambda}_s{\op P}(s) = {\op P}(s)$
and that $\cosk^{\Lambda}_{s-1}{\op P}(s)$
is identified with $M({\op P})(s)$, the $s$th matching object of the reduced operad ${\op P}$.

If ${\op P}$ is Reedy fibrant, then the morphisms $\cosk^{\Lambda}_s{\op P}\rightarrow\cosk^{\Lambda}_{s-1}{\op P}$
that define this coskeletal decomposition of the operad are Reedy fibrations,
and in particular, form a fibration of simplicial sets $\cosk^{\Lambda}_s{\op P}(r)\rightarrow\cosk^{\Lambda}_{s-1}{\op P}(r)$
in each arity~$r$.
The Postnikov decompositions that we consider are decompositions of these morphisms
into towers of morphisms:
\begin{multline*}
\cosk^{\Lambda}_s{\op P} = \lim_t P_t(\cosk^{\Lambda}_s{\op P}/\cosk^{\Lambda}_{s-1}{\op P})
\rightarrow\cdots\\
\cdots\rightarrow P_t(\cosk^{\Lambda}_s{\op P}/\cosk^{\Lambda}_{s-1}{\op P})
\rightarrow P_{t-1}(\cosk^{\Lambda}_s{\op P}/\cosk^{\Lambda}_{s-1}{\op P})\rightarrow\cdots\\
\cdots\rightarrow P_0(\cosk^{\Lambda}_s{\op P}/\cosk^{\Lambda}_{s-1}{\op P}) = \cosk^{\Lambda}_{s-1}{\op P},
\end{multline*}
which we obtain by applying the classical construction
of Postnikov decompositions
to the morphisms $\cosk^{\Lambda}_s{\op P}(r)\rightarrow\cosk^{\Lambda}_{s-1}{\op P}(r)$
in the category of simplicial sets (we just check that the operad structure
passes to the Postnikov sections).
The morphisms $P_t(\cosk^{\Lambda}_s{\op P}/\cosk^{\Lambda}_{s-1}{\op P})\rightarrow P_{t-1}(\cosk^{\Lambda}_s{\op P}/\cosk^{\Lambda}_{s-1}{\op P})$
are still Reedy fibrations.

In the simplifying case where ${\op P}$ consists of simply connected spaces, the main result of Mienn\'e's theory asserts
that such morphisms fit in Postnikov extension diagrams of the following form
in the category of $s$-truncated operads:
\[
\begin{tikzcd}
P_t(\cosk^{\Lambda}_s{\op P}/\cosk^{\Lambda}_{s-1}{\op P})_{\leq s}\ar{r}\ar{d} & L(N\pi_t{\op P}(s),t+1)\ar{d} \\
P_{t-1}(\cosk^{\Lambda}_s{\op P}/\cosk^{\Lambda}_{s-1}{\op P})_{\leq s}\ar{r} & K(N\pi_t{\op P}(s),t+1)
\end{tikzcd}
\]
where $N\pi_t{\op P}(s)$ is the homotopy group $\pi_t$ of the fiber of the matching map ${\op P}(s)\rightarrow M{\op P}(s)$
regarded as an additive operad concentrated in arity $s$,
we consider the associated Eilenberg--MacLane space $K(N\pi_t{\op P}(s),t+1)$
and the corresponding path space fibration sequence $K(N\pi_t{\op P}(s),t)\rightarrow L(N\pi_t{\op P}(s),t+1)\rightarrow K(N\pi_t{\op P}(s),t+1)$
with $L(N\pi_t{\op P}(s),t+1) = P K(N\pi_t{\op P}(s),t+1)\sim *$.

We have an analogous statement in the context of infinitesimal bimodules.
In the case of an operad $\op P$ (which we regard as an infinitesimal bimodule over itself),
we can also use the Postnikov decompositions and the above Postnikov extension diagrams
in the category of operads to get the Postnikov decomposition of our object $\op P$
in the category of infinitesimal bimodules.

We use these decompositions to study mapping spaces of infinitesimal bimodules $\IBimod^h_{\op R}({\op M},{\op P})$,
where $\op R$ is any operad endowed with an operad map ${\op R}\to{\op P}$.
We get, at the mapping space level, the tower decomposition
\begin{multline*}
\IBimod^h_{\op R}({\op M},{\op P}) = \lim_s\IBimod^h_{\op R,\leq s}({\op M},{\op P})
\rightarrow\cdots\\
\cdots\rightarrow\IBimod^h_{\op R,\leq s}({\op M},{\op P})\xrightarrow{(*)}\IBimod^h_{\op R,\leq s-1}({\op M},{\op P})
\rightarrow\cdots\\
\cdots\rightarrow\IBimod^h_{\op R,\leq 0}({\op M},{\op P}) = *,
\end{multline*}
which we can associate to the coskeletal decomposition
of the target object
since $\IBimod^h_{\op R,\leq s}({\op M},{\op P}) = \IBimod^h_{\op R}({\op M},\cosk_s^{\Lambda}{\op P})$,
and we decompose the morphisms of this tower further,
as
\begin{multline*}
\IBimod^h_{\op R,\leq s}({\op M},{\op P}) = \lim_t\IBimod^h_{\op R,\leq s}({\op M},P_t(\cosk^{\Lambda}_s{\op P}/\cosk^{\Lambda}_{s-1}{\op P}))
\rightarrow\cdots\rightarrow\\
\IBimod^h_{\op R,\leq s}({\op M},P_t(\cosk^{\Lambda}_s{\op P}/\cosk^{\Lambda}_{s-1}{\op P}))
\xrightarrow{(**)}\IBimod^h_{\op R,\leq s}({\op M},P_{t-1}(\cosk^{\Lambda}_s{\op P}/\cosk^{\Lambda}_{s-1}{\op P}))
\\
\rightarrow\cdots\rightarrow\IBimod^h_{\op R,\leq s}({\op M},P_0(\cosk^{\Lambda}_s{\op P}/\cosk^{\Lambda}_{s-1}{\op P}))
= \IBimod^h_{\op R,\leq s-1}({\op M},{\op P}),
\end{multline*}
by using the Postnikov decomposition on the target.
We get that the morphisms (**),
in the latter tower of mapping spaces, are identified with the principal fibrations of simplicial sets
with the mapping spaces
\[
G\langle s,t\rangle = \IBimod^h_{\op R,\leq s}({\op M},K(N\pi_t{\op P}(s),t))
\]
as fibers. We have in general:
\[
\pi_i\IBimod^h_{\op R,\leq s}({\op M},K(\pi(s),t)) = \tilde{H}^{t-i}_{\Sigma_s}(L\Indec{\op M}(s),\pi(s)),
\]
for any $\Sigma_s$-module $\pi(s)$ (regarded as an infinitesimal bimodule concentrated in arity $s$),
where $L\Indec$ denotes the (derived) functor of indecomposables from the category of infinitesimal bimodules
to the category of right $\Lambda$-modules in pointed spaces,
and we consider the reduced $\Sigma_s$-equivariant cohomology of the based $\Sigma_s$-space $L\Indec{\op M}(s)$
with coefficients in $\pi(s)$.
The indecomposable quotient $\Indec{\op M}$ of an infinitesimal bimodule ${\op M}$ is defined by moding out the spaces ${\op M}(r)$
by the $\Sigma_r$-subspaces spanned by the image of the composition operations $\circ_i: {\op R}(k)\rightarrow{\op M}(r-k+1)\rightarrow{\op M}(r)$
and $\circ_i: {\op M}(r-k+1)\rightarrow{\op R}(k)\rightarrow{\op M}(r)$
such that $k\geq 2$ (when we assume that the operad $\op R$ is reduced).
This object $\Indec{\op M}$ naturally inherits the structure of a right $\Lambda$-module in pointed simplicial sets.
Note that, in the above formula, we consider the reduced homology relative to the natural base point
of the indecomposable quotient.

This refined tower decomposition of mapping spaces of infinitesimal bimodules is used to establish the claims of the following statement.
We proceed by induction, by using the expression of the fibers of our decomposition,
and we use an analysis of the connectivity of these fibers
to pass to the limit of the tower
whenever necessary.

\begin{thm}[{\cite{MienneMemoir}}]
Let $\op P$ be a Reedy fibrant object in the category of infinitesimal bimodules over an operad $\op R$.
For simplicity, we are still going to assume that $\op P$ consists of simply connected spaces.
Let $\op M$ be any infinitesimal $\op R$-bimodule.
\begin{enumerate}
\item
If we have a finite dimension bound $m(s)$, in each arity $s\geq 1$,
such that
\[
H^i_{\Sigma_s}(L\Indec{\op M}(s),\pi(s)) = 0\quad\text{for $i\leq m(s)$},
\]
for any choice of $\Sigma_s$-module of coefficients $\pi(s)$, then the mapping spaces $\IBimod^h_{\op R,\leq s}({\op M},{\op P})$
are nilpotent, for all $s\geq 1$,
and satisfy the relation
\[
\IBimod^h_{\op R,\leq s}({\op M},{\op P})_{\phi}^{\Q}\sim\IBimod^h_{\op R,\leq s}({\op M},{\op P}^{\Q})_{\hat{\phi}},
\]
for any choice of base point $\phi\in\IBimod^h_{\op R,\leq s}({\op M},{\op P})$,
where $\hat{\phi}$ denotes the composite of $\phi: {\op M}_{\leq s}\rightarrow{\op P}_{\leq s}$
with the rationalization map ${\op P}_{\leq s}\rightarrow{\op P}_{\leq s}^{\Q}$.
\item
If we assume further that the matching map ${\op P}(s)\rightarrow M{\op P}(s)$ is $n(s)$-connected
for a connectivity bound $n(s)$ such that $n(s)-m(s)\rightarrow\infty$,
then the connectivity of the maps $\IBimod^h_{\op R,\leq s}({\op M},{\op P})\rightarrow\IBimod^h_{\op R,\leq s-1}({\op M},{\op P})$,
which connect the truncated mapping spaces
of the previous assertion,
tends to infinity when $s\rightarrow\infty$, and we can also pass to the limit $s\rightarrow\infty$
in our statement.
Thus, we get that the ``total'' mapping space $\IBimod^h_{\op R}({\op M},{\op P})$ is also nilpotent in this case,
and we still have the relation
\[
\IBimod^h_{\op R}({\op M},{\op P})_{\phi}^{\Q}\sim\IBimod^h_{\op R}({\op M},{\op P}^{\Q})_{\hat{\phi}}
\]
at this level, for any choice of base point $\phi\in\IBimod^h_{\op R}({\op M},{\op P})$,
where we again denote by $\hat{\phi}$ the composite of this morphism $\phi: {\op M}\rightarrow{\op P}$
with the rationalization map ${\op P}\rightarrow{\op P}^{\Q}$.
\item
In the previous assertions, if we assume in addition that the homotopy groups $N\pi_t{\op P}(s)$ associated to the infinitesimal bimodule ${\op P}$
are finitely generated in each degree $t$ and in each arity $s$,
and that the cohomology groups $H^i_{\Sigma_s}(L\Indec{\op M}(s),\pi(s))$ are finitely generated abelian groups in each degree~$i$,
for any choice of finitely generated $\Sigma_s$-module of coefficients $\pi(s)$,
then the rationalization map ${\op P}\rightarrow{\op P}^{\Q}$
also induces a finite-to-one map on the connected components
of our mapping spaces.
\end{enumerate}
\end{thm}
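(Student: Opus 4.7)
The plan is to proceed by a double induction based on the two towers introduced just above the theorem statement: the outer coskeletal tower $\op P = \lim_s\cosk^{\Lambda}_s\op P$ by arity filtration, and the inner Postnikov refinement of each coskeletal step. The base case is the trivial mapping space $\IBimod^h_{\op R,\leq 0}({\op M},{\op P}) = *$, arising from the fact that both infinitesimal bimodules are forced to coincide on the arity-zero point of the reduced operad structure. I would first handle each coskeletal stage $s$ by inducting on the Postnikov level $t$, and only then pass to the limit $s\to\infty$.

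At each inner step we have a principal fibration whose fiber in the mapping space is the abelian H-space $G\langle s,t\rangle = \IBimod^h_{\op R,\leq s}({\op M},K(N\pi_t{\op P}(s),t))$. The plan for Part (1) is to use that principal fibrations with abelian H-space fibers preserve nilpotence, and that Bousfield--Kan $\Q$-localization commutes with such fibrations of nilpotent spaces. The key computation at each layer is the identification
\[
\pi_i G\langle s,t\rangle(\op P^{\Q}) = \tilde H^{t-i}_{\Sigma_s}(L\Indec{\op M}(s),N\pi_t{\op P}(s)\otimes\Q) \cong \pi_i G\langle s,t\rangle(\op P)\otimes\Q,
\]
which follows from the universal coefficient theorem together with the identity $N\pi_t{\op P}^{\Q}(s) = N\pi_t{\op P}(s)\otimes\Q$ for the rationalization of nilpotent spaces. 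Both nilpotence and rationalization compatibility then propagate through the inner tower.

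For Part (2) the plan is to pass to the limit in the outer coskeletal tower using the connectivity hypothesis $n(s)-m(s)\to\infty$. The condition $N\pi_t{\op P}(s) = 0$ for $t\leq n(s)$ forces only Postnikov layers with $t > n(s)$ to contribute, and combined with the cohomological vanishing for $i\leq m(s)$ this yields uniform connectivity bounds on the homotopy fibers of the coskeletal maps $\IBimod^h_{\op R,\leq s}({\op M},{\op P}) \to \IBimod^h_{\op R,\leq s-1}({\op M},{\op P})$ that grow to infinity with $s$. This growth ensures that derived inverse limit terms vanish in each fixed degree, so nilpotence and rationalization compatibility survive in the limit. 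For Part (3) I would carry through the finite generation hypotheses: each $\pi_0 G\langle s,t\rangle = \tilde H^t_{\Sigma_s}(L\Indec{\op M}(s),N\pi_t{\op P}(s))$ is then a finitely generated abelian group whose rationalization has finite kernel, and the uniform connectivity from Part (2) guarantees that only finitely many layers contribute nontrivially to $\pi_0$ at infinity.

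The main obstacle will be the convergence argument in Part (2). One must verify that the interplay between the Postnikov filtration and the arity filtration produces connectivity bounds that grow uniformly in every degree with vanishing $\lim^1$-terms, and that nilpotence is actually preserved under the inverse limit (not merely at each finite stage). This is where the detailed hypothesis $n(s)-m(s)\to\infty$ is essential, and where Mienn\'e's adaptation of Haefliger's theory to operadic infinitesimal bimodules requires the most delicate bookkeeping.
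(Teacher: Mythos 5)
Your proposal follows essentially the same strategy that the paper sketches just before the theorem statement: a double induction on the coskeletal tower $\op P = \lim_s\cosk^\Lambda_s\op P$ and its Postnikov refinement, identifying the layers with principal fibrations of mapping spaces with fibers $G\langle s,t\rangle = \IBimod^h_{\op R,\leq s}({\op M},K(N\pi_t{\op P}(s),t))$ whose homotopy is computed by the $\Sigma_s$-equivariant cohomology formula, and then a connectivity analysis using $n(s)-m(s)\to\infty$ to pass to the limit and to obtain the finite-to-one statement. The paper itself defers the full proof to Mienn\'e's memoir, but the route you outline (including the appeal to compatibility of rationalization with principal fibrations of nilpotent spaces and the $\lim^1$ vanishing in the limit step) matches the intended argument.
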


We checked in \cite[Lemma~10.7]{FTW} that the matching map ${\op P}(s)\rightarrow M{\op P}(s)$ is $(n-2)(s-1)$-connected in the case of the operad ${\op P} = \FM_n$. On the other hand, one has that $\IF_M$ is
cofibrant and therefore $L\Indec(\IF_M)=\Indec(\IF_M)$. In each arity, $\IF_M(s)$ is a manifold with corners and
\[
\Indec(\IF_M)(s)=\IF_M(s)/\partial\IF_M(s)=M_*^{\wedge s}/\Delta^sM_*.
\]
The right-most space is the quotient of the $s$-th smash power of $M_*$ by the fat diagonal $\Delta^sM_*$.
The space $M_*^{\wedge s}/\Delta^sM_*$ is a $\Sigma_s$-cofibrant pointed space of  dimension~$ms$. 
In case the codimension $n-m\geq 3$, we can take $m(s)=ms$ and one has that 
\[
n(s)-m(s)=(n-2)(s-1)-ms\to \infty.
\]
In case the codimension $n-m=2$, one has that $M\neq \R^m$ and therefore the handle dimension of $M_*$
is $\leq m-1$. The assignment $M_*\mapsto M_*^{\wedge s}/\Delta^sM_*$ is homotopy invariant and would 
give an equivalent pointed $\Sigma_s$-space if $M_*$ is replaced by a homotopy equivalent 
 pointed CW-complex $X_*$ of dimension $\leq m-1$. The dimension of 
$X_*^{\wedge s}/\Delta^sX_*$ is $m(s)=(m-1)s$ and therefore the convergency requirement
$n(s)-m(s)\to\infty$ is again satisfied. Finiteness property follows from the fact that any homotopy
group of the configuration space $C(s,\R^n)\simeq \FM_n(s)$, $n\geq 3$, is finitely generated (see again~\cite[Lemma~10.7]{FTW}\footnote{An
 expression of $\pi_*\FM_n(s)$ in terms of the homotopy groups of spheres is given in the proof of this reference.}) and that $M$  (and therefore $M_*^{\wedge s}/\Delta^sM_*$ as well)
is homotopy equivalent  to a finite CW complex.

\subsection{Range improvement}\label{ss:range2}
In this subsection we explain why Theorem~\ref{thm:main1} and Corollary~\ref{cor_main}.a-c hold for more general manifolds $M^m$ as described in Section~\ref{ss:range1}. One has $M\subset M_*=M\cup\{*\}$ and the immersion $i\colon M_*\looparrowright S^m=\R^m\cup\{\infty\}$
satisfies $i^{-1}(\infty)=*$. The corresponding infinitesimal $\FM_m$-bimodule $\IF_M$ is defined as follows. Let $i\mathcal{O}(M)$ denote the category
of open subsets $U\subset M$, such that $U\cup\{*\}$ is open in $M_*$ and $i|_U$ is injective.  Define
\[
\IF_M:=\colim_{U\in i\mathcal{O}(M)}\IF_{i(U)}
\]
as a colimit in the category of infinitesimal $\FM_m$-bimodules.
 Note that as a space $\IF_M(k)$ is the Fulton--MacPherson--Axelrod--Singer
local compactification~\cite{Sinha} of the configuration space  of $k+1$ distinct labeled points in $M_*$, with the first being fixed to be~$*$.

The equivalence~\eqref{equ:AT} is proved in exactly the same way as \cite[Theorem~2.1 or Theorem~6.3]{Turchin4}. Propositions~\ref{prop:EMReedyCof},
\ref{prop:indEM} and Theorems~\ref{thm:main1}, \ref{thm:FM} are also proved by exactly the same arguments.

\section{Examples and applications}\label{s:examples}
In this section we provide some applications.
In general the cohomology of the graph complex $\HGC_{A,n}$ is not known.
However, one can perform low degree computations and make certain qualitative statements.

\subsection{Maurer--Cartan elements}\label{ss:MC}
The goal of this section is to understand the type of hairy  diagrams that appear in degree 0 and that can contribute to Maurer--Cartan (MC) elements.
Recall that a MC element of a complete $L_\infty$-algebra $\mathfrak{g}$ is, under our grading convention, a degree zero element $m\in\mathfrak{g}$
that satisfies the MC equation:
\[
dm+\frac 12[m,m]+\frac 1{3!}[m,m,m]+\ldots = \sum_{i=1}^\infty \frac 1{i!}\ell_i(\underbrace{m,\ldots,m}_i) =0.
\]
The $m$-twisted $L_\infty$ algebra $\mathfrak{g}^m$ has the differential
\[
d^m = d+[m,-]+\frac 12 [m,m,-]+\ldots =\sum_{i=0}^\infty\frac 1{i!} \ell_{i+1}(\underbrace{m,\ldots,m}_i,-)
\]
and (higher) brackets
\[
\ell_k^m(\underbrace{-,\ldots,-}_k)=\sum_{i=0}^\infty\frac 1{i!}\ell_{i+k}(\underbrace{m,\ldots,m}_i,\underbrace{-,\ldots,-}_k).
\]

Also recall that  $\MC_\bullet(\mathfrak{g})$ is the simplicial set $\MC(\mathfrak{g}\hat\otimes\Omega^*(\Delta^\bullet))$ of MC elements. In particular the gauge
relations are  1-simplices in this  set, or, in other words, MC elements of $\mathfrak{g}\hat\otimes\Omega^*(\Delta^1)$.

For a dg commutative algebra or homotopy commutative algebra $A$ we denote by $\dim(A)$, the maximal degree where $A$ is non-zero. 
 However, we do not know in general whether a space $M_*$ of the form considered in our main theorem admits a Sullivan model of finite dimension.
On the other hand,  under the assumptions of Theorem~\ref{thm:FM}, one can always choose a homotopy commutative algebra model $A$ of $M_*$
of \emph{algebraic codimension} $n-\dim(A)\geq 3$.
For example, we can take the augmentation ideal of the cohomology algebra $H^*(M_*)$
equipped with the transferred homotopy commutative algebra structure
that makes this object a model of the pointed space $M_*$
in the category of homotopy commutative algebras (see the account of Section~\ref{sec:homotopy_commutative_extension}).
In many relevant applications natural dg commutative algebra models of finite type and of that codimension exist also, and hence, we can take them instead, avoiding the use of homotopy commutative algebras.

\begin{prop}\label{prop:MC}
Let $A$ be a dg commutative algebra or homotopy commutative algebra of finite type with $\Delta:=n-\dim(A)-3\geq 0$, then the complex $\HGC_{A,n}$ is bounded below and is finite-dimensional
in every degree. Moreover, its part of non-positive degree $\left(\HGC_{A,n}\right)_{\leq 0}$ is spanned by trees with $\leq\frac{n-3}{1+\Delta}$ leaves.
Any such tree with $N$ leaves has all its leaves labeled by elements of $A$ of degree $\geq (N-1)(1+\Delta)+1$.
\end{prop}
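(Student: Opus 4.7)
The plan is to reduce everything to one key inequality relating the number of hairs $N$, the first Betti number $b_1$, and the total degree of a graph $\Gamma\in\HGC_{A,n}$. For such a $\Gamma$ with $\#V$ internal vertices and $N$ hairs, connectedness gives $\#E = \#V + N - 1 + b_1$, and the trivalence condition $3\#V\leq 2\#E_{\text{int}} + N$ combined with $\#E_{\text{int}} = \#V - 1 + b_1$ yields $\#V \leq N + 2b_1 - 2$. Substituting into the degree formula $\deg(\Gamma) = (n-1)\#E - n\#V - \sum_i|a_i|$ and using the uniform bound $|a_i|\leq \dim(A) = n - 3 - \Delta$, a short calculation produces
\[
(N-1)(1+\Delta) + (n-3)\,b_1 \;\leq\; \deg(\Gamma) + (n - 4 - \Delta).
\]

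From this single estimate the three statements follow by elementary manipulations. First, setting $N\geq 1$ and $b_1\geq 0$ yields $\deg(\Gamma)\geq \Delta + 4 - n$, which gives boundedness below. Second, for a fixed degree $d$, both $N$ and $b_1$ are bounded by the inequality; the trivalence bound then controls $\#V$ and $\#E$; the degree identity forces the value of $\sum_i|a_i|$; and since $A$ is of finite type, only finitely many labellings of that total weight exist, giving finite-dimensionality in each degree.

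For the statements in non-positive degree I would take $d\leq 0$, so that the right-hand side becomes $\leq n - 4 - \Delta$. Because $n-3 > n - 4 - \Delta$ whenever $\Delta\geq 0$, any nonzero loop would already violate the bound, forcing $b_1 = 0$; hence $\Gamma$ is a tree. Plugging $b_1 = 0$ back gives $N-1 \leq (n-4-\Delta)/(1+\Delta)$, equivalently $N\leq (n-3)/(1+\Delta)$. For the lower bound on leaf label degrees, I would use the sharper tree-valence estimate $\#V\leq N-2$ to obtain
\[
\sum_{i=1}^{N}|a_i| \;\geq\; (n-1)(N-1)-\#V \;\geq\; (n-2)(N-1)+1,
\]
then single out one label $a_i$ and bound the remaining $N-1$ labels by $\dim(A) = n-3-\Delta$, concluding $|a_i|\geq (N-1)(1+\Delta)+1$.

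The analysis is insensitive to whether $A$ is a strict dg commutative algebra or only an $\Omega L_\infty^c$-algebra, since the underlying graph space and its grading depend only on the chain complex structure of $A$; the extra $L_\infty$-structure on $\HGC_{A,n}$ merely adds operations, not new generators. The only mildly subtle bookkeeping concerns tadpoles and multiple edges, but both are absorbed automatically into the identities $\#E = \#V + N - 1 + b_1$ and $\#V \leq N + 2b_1 - 2$, so I do not anticipate any real obstacle.
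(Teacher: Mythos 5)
Your proof is correct and follows essentially the same approach as the paper: both derive the bounds from the degree formula combined with the Euler-characteristic identity and the trivalence constraint $\#V\leq N+2b_1-2$, evaluating in the extremal case of unitrivalent graphs with maximal label degrees. The difference is purely expository: you package all the cases into the single master inequality $(N-1)(1+\Delta)+(n-3)b_1\leq\deg(\Gamma)+(n-4-\Delta)$ and read each conclusion off from it, whereas the paper cites \cite{STT} for the boundedness and finite-dimensionality claims (and for the observation that positive genus forces positive degree) and then treats the unitrivalent tree computations directly.
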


\begin{proof}
The first statement and the fact that all hairy graphs of genus $\geq 1$ appear in strictly positive degrees are easy to check,
see \cite[Lemma~4.3]{STT} for the proof of a similar result.
For the second statement we note that the trees with $N$ leaves have the smallest degree when they are unitrivalent. Such trees
have $2N-3$ edges and $N-2$ internal vertices. The smallest possible degree of such a tree is
$(2N-3)(n-1)-(N-2)n-N(n-3-\Delta)$. Assuming that this degree is $\leq 0$ implies $N\leq \frac{n-3}{1+\Delta}$.

If a unitrivalent tree of degree $\leq 0$ has one leaf labeled by an element of degree $I$ and all the other $N-1$  leaves  labeled
by elements of the maximal possible degree $n-3-\Delta$, one has $(2N-3)(n-1)-(N-2)n-(N-1)(n-3-\Delta)-I\leq 0$. The
latter inequality implies $I\geq(N-1)(1+\Delta)+1$.
\end{proof}

\begin{cor}\label{cor:MC}
Under the assumptions of Proposition~\ref{prop:MC}, we have the relations
\begin{align*}
\left.\MC\left(\HGC_{A,n}\right)\right/{\sim} & = \left.\MC\left(\HGC_{A_{>0},n}\right)\right/{\sim}\\
\text{and}\quad H_0\left(\HGC_{A,n}\right) & = H_0\left(\HGC_{A_{>0},n}\right).
\end{align*}
\end{cor}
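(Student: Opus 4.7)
The plan is to reduce both parts to the structural fact that the natural inclusion $\HGC_{A_{>0},n}\hookrightarrow \HGC_{A,n}$ is an equality of $L_\infty$-algebras in homological degrees $-1$, $0$, and $+1$. The subalgebra property is immediate from the formulas in Section~\ref{sec:HGC} (and Section~\ref{sec:homotopy_commutative_extension} in the homotopy commutative case): the summand $d_A$ preserves positive $A$-degrees, $\delta_{split}$ leaves hair decorations unchanged, and $\delta_{join}$ together with the higher operations $\ell_k$ act by multiplying hair decorations or applying operations $\rho_A^T$, all of which preserve $A_{>0}$ since a product of positive-degree elements has strictly positive degree (and $\rho_A^T$ with $p$ internal vertices has degree $1-p$ with $r\geq p+1$ inputs, so on $A_{>0}^{\otimes r}$ the output degree is $\geq r+1-p\geq 2$).

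For the equality in low degrees, I extend the degree count used in the proof of Proposition~\ref{prop:MC} to include positive genus and homological degree $+1$. A unitrivalent graph of genus $g$ with $N$ hairs has $\#E=2N-3+3g$ and $\#V=N-2+2g$, so its homological degree equals
\[
Nn - 2N - n + 3 + g(n-3) - \sum_{j=1}^{N} I_j,
\]
with non-unitrivalent graphs only raising the degree. Imposing this equal to $d$ and using $I_j\leq \dim(A)=n-3-\Delta$ yields the single-hair lower bound
\[
I_j \;\geq\; (N-1)(1+\Delta)+1-d+g(n-3).
\]
For $d\leq 1$ and $N\geq 2$ (which covers all trees, since a hairy tree necessarily has at least two univalent leaves), this bound is $\geq 1+\Delta\geq 1$; for $N=1$, $g\geq 1$ and $n\geq 4$, it is $\geq n-3\geq 1$. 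The only remaining case is $n=3$, where $\Delta\geq 0$ forces $\dim(A)=0$ and hence $A_{>0}=0$; the same formula then gives $\HGC_{A,3}^{0}=0$, so both sides of the claimed identities are trivially zero.

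With the equality $\HGC_{A,n}^d = \HGC_{A_{>0},n}^d$ for $d\in\{-1,0,1\}$ in hand (outside the trivial case), both claims follow. Part~(b) is immediate because $H_0$ is determined by the three-term subcomplex in these degrees. For part~(a), Maurer--Cartan elements live in $\HGC^0$ with the MC equation valued in $\HGC^{-1}$, so $\MC(\HGC_{A,n}) = \MC(\HGC_{A_{>0},n})$; a gauge equivalence is a Maurer--Cartan element of total degree $0$ in $\HGC_{A,n}\hotimes\Omega^*(\Delta^1)$, decomposing as $x_0(t)+x_1(t)\,dt$ with $x_0(t)\in\HGC_{A,n}^{0}[t]$ and $x_1(t)\in\HGC_{A,n}^{1}[t]$, and whose MC equation in total degree $-1$ involves only $\HGC_{A,n}^{-1}[t]$ and $\HGC_{A,n}^{0}[t]$. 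All of these pieces lie in the $A_{>0}$-subalgebra, so the quotients by gauge coincide as well. The only delicate point is the Euler-characteristic bookkeeping in the extended range, together with isolating the degenerate case $n=3$, $\dim(A)=0$; the latter is essentially vacuous since the relevant complexes vanish.
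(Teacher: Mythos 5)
Your proposal is correct and follows essentially the same route as the paper: the paper's proof is a terse application of the degree count from Proposition~\ref{prop:MC} (MC elements are trees with labels of degree $\geq 2$, gauge relations involve labels of degree $\geq 1$, and the same reasoning applies to $H_0$), which is exactly the degree estimate you carry out. You are somewhat more careful than the printed argument in that you spell out the $L_\infty$-subalgebra property of $\HGC_{A_{>0},n}$, extend the bound explicitly to homological degree $+1$ and positive genus (which the paper only implicitly covers via the ``genus $\geq 1$ graphs have strictly positive degree'' remark), and isolate the degenerate case $n=3$, but the substance is the same.
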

\noindent By $A_{>0}$ we understand the naive truncation $(A_{>0})_i=\begin{cases}A_i,&i>0,\\0,&i=0.\end{cases}$

\begin{proof}
By Proposition~\ref{prop:MC}, only linear combinations of trees can produce a MC element of $\HGC_{A,n}$. Moreover all its $A$ labels must be of degree
$\geq (N-1)(1+\Delta)+1\geq (2-1)(1+0)+1 = 2$. Similarly  only trees with $A$  labels of degree $\geq 1$ can contribute
to the gauge relations.
For $H_0$ the argument is the same.
\end{proof}

\subsection{MC elements in terms of unitrivalent trees}\label{ss:utt}
In this subsection we explain how MC elements can be encoded in terms of unitrivalent trees modulo $IHX$ relations.
We hope that this description  will be easier to relate  to the known higher dimensional knot invariants,
see sections~\ref{ss:strings}-\ref{ss:gauge}.

For any dg commutative algebra $A$ and an integer $n\geq 3$, consider the space $\UTT_{A,n}$ defined as follows. It is spanned by the \emph{unitrivalent trees} of
$\HGC_{A,n}$ (which means trees, whose all internal vertices are trivalent). In addition to the orientation relation as in $\HGC_{A,n}$, we quotient this space
by $IHX$ relations --- images of $\delta_{split}$ of trees whose all internal vertices are trivalent except one, which is four-valent. Note that $\UTT_{A,n}$ is
a quotient-complex of $\HGC_{A,n}$, moreover  $\UTT_{A,n}$ inherits from $\HGC_{A,n}$ a dg Lie algebra structure by means of this quotient map.
\begin{prop}\label{prop:utt}
Assuming $n-\dim(A)\geq 3$, the quotient map $\HGC_{A,n}\to\UTT_{A,n}$ induces a bijection on the sets of gauge equivalence classes of MC elements.
\end{prop}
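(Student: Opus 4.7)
The strategy is to factor the quotient map as a composite $\HGC_{A,n}\to\mathrm{TREE}_{A,n}\to\UTT_{A,n}$, where $\mathrm{TREE}_{A,n}$ denotes the quotient of $\HGC_{A,n}$ by the subspace of graphs of first Betti number $\geq 1$. That cyclic graphs form an $L_\infty$-ideal is immediate from the combinatorial description of the operations in Section~\ref{sec:HGCLie}: joining two or more hairs at a new internal vertex always creates a cycle (connecting distinct attachment points through the new common vertex), while joining one hair from each of $k\geq 2$ trees preserves tree-ness. A refinement of the degree analysis of Proposition~\ref{prop:MC} shows that under the hypothesis $n-\dim(A)\geq 3$ every graph $\Gamma$ with $b_1(\Gamma)\geq 1$ satisfies $\deg(\Gamma)\geq 1$: writing $\Delta=n-\dim(A)-3$, for a $1$-loop graph with $N$ hairs and $V_{\mathrm{int}}$ internal vertices one has $\deg\geq -V_{\mathrm{int}}+N(2+\Delta)\geq N(1+\Delta)\geq 1+\Delta\geq 1$, using $V_{\mathrm{int}}\leq N$ (from the valence constraint). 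Consequently cyclic graphs contribute neither to MC elements (degree $0$) nor to their obstructions and gauge equivalences (degree $-1$), and the first projection induces a bijection on gauge-equivalence classes of MC elements.

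For the second projection $\mathrm{TREE}_{A,n}\to\UTT_{A,n}$, observe that on $\mathrm{TREE}_{A,n}$ the surviving differential is $d_A+\delta_{split}$ while the operations $\ell_k$ for $k\geq 2$ reduce to grafting a single hair from each of the $k$ input trees at a new internal vertex of valence $k+1$; for $k\geq 3$ that vertex is at least $4$-valent and therefore projects to zero in $\UTT_{A,n}$. Hence $\UTT_{A,n}$ inherits a pure dg Lie structure, and $\ker$ of the second projection consists of the subspace $\mathrm{NonUni}_{A,n}$ of trees with at least one internal vertex of valence $\geq 4$ together with the image $\delta_{split}(\mathrm{NonUni}_{A,n})$ in the unitrivalent locus, which is exactly the span of the IHX relations. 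The plan is to introduce the \emph{excess-valence} filtration $\mathrm{exc}(\Gamma):=\sum_{v\text{ internal}}(\mathrm{val}(v)-3)$, which is preserved by $d_A$ and strictly decreased by $1$ under $\delta_{split}$, and to establish that the associated $\delta_{split}$-complex
\[
\cdots\to\mathrm{TREE}^{(2)}_{A,n}\xrightarrow{\delta_{split}}\mathrm{TREE}^{(1)}_{A,n}\xrightarrow{\delta_{split}}\mathrm{TREE}^{(0)}_{A,n}
\]
has cohomology concentrated in excess $0$ and equal to $\mathrm{Uni}_{A,n}/\mathrm{IHX}=\UTT_{A,n}$. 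This combinatorial acyclicity at positive excess is the technical heart of the proof; one expects an explicit contracting homotopy that systematically splits a designated ``highest-valence'' vertex, patterned on analogous resolutions in the theory of Vassiliev invariants and Jacobi diagrams.

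Granted the acyclicity, the bijection on MC gauge classes follows by standard obstruction theory with the complete filtration \eqref{equ:Linfty filtration compat} on $\HGC_{A,n}$ by number of edges. For surjectivity, given $\bar m\in\MC(\UTT_{A,n})$ choose a unitrivalent-tree lift $m_0\in(\HGC_{A,n})_0$; by hypothesis the obstruction $E_0:=\sum_{k\geq 1}\tfrac{1}{k!}\ell_k(m_0^{\otimes k})$ maps to zero in $\UTT_{A,n}$ and lies in degree $-1$, and by the analysis above its highest-excess part may be written as $\delta_{split}(X_0)$; replacing $m_0$ by $m_0-X_0$ and iterating, the successive corrections converge in the complete edge filtration to a true MC element $m\in\HGC_{A,n}$ lifting $\bar m$. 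Injectivity is obtained by running the same argument for $\HGC_{A,n}\hotimes\Omega^*(\Delta^1)$, lifting a gauge path in $\UTT_{A,n}$ to one in $\HGC_{A,n}$. The main obstacle remains the combinatorial acyclicity claim of the middle paragraph, whose careful verification with signs and multiplicities is where the genuine work lies.
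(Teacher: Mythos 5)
Your overall strategy is the same as the paper's: factor the quotient $\HGC_{A,n}\to\UTT_{A,n}$ through the tree-only quotient (the paper calls it $\TGC_{A,n}$), use the degree estimate to show the first map does not affect MC elements or gauge paths, and then compare $\TGC_{A,n}$ with $\UTT_{A,n}$ by a filtration argument. Your degree estimate for $1$-loop graphs is correct (and extends to arbitrary $b_1\geq 1$ without difficulty), though in fact the paper simply quotes Proposition~\ref{prop:MC} for this point. The place where you and the paper diverge is in how the ``technical heart'' is handled, and this is where your proposal has a genuine gap: you reduce everything to an acyclicity claim about the excess-valence complex of trees which you then leave unproven, saying ``one expects an explicit contracting homotopy.'' The paper does not reprove this fact; it filters $\TGC_{A,n}$ and $\UTT_{A,n}$ by the number of hairs minus one, observes that the associated graded quotient map is a surjective quasi-isomorphism \emph{because the cyclic $L_\infty$ operad is quasi-isomorphic to the cyclic Lie operad} (a standard operadic Koszul-duality fact, the same mathematical content as your acyclicity), and then invokes the obstruction-theory argument of Proposition~\ref{prop:graphscofibrant} verbatim. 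In other words, the hole in your argument is closable, but by citation, not by the sketch you give.

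A second, more technical issue is the filtration you invoke in the final lifting step. For the obstruction-theoretic lifting to run as in Proposition~\ref{prop:graphscofibrant}, one needs a complete descending filtration compatible with the $L_\infty$-operations on both $\TGC_{A,n}$ and $\UTT_{A,n}$ for which the quotient map induces a surjective quasi-isomorphism on the associated graded. The edge filtration you cite does \emph{not} have this property: since $\delta_{split}$ raises the number of edges, it vanishes on the associated graded of the edge filtration, so the associated graded quotient $\gr\TGC_{A,n}\to\gr\UTT_{A,n}$ is far from a quasi-isomorphism there. Your iteration mixes the excess grading (to identify a primitive $X_0$ with $\delta_{split}X_0$ equal to the top-excess part of the curvature) with edge-filtration convergence, but $X_0$ has one fewer edge than the curvature term it kills, so convergence in the edge filtration is not automatic and the iteration as stated does not obviously terminate. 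The filtration by the number of hairs minus one, which the paper uses, is the natural one here: it is preserved by $d_A$ and $\delta_{split}$, is strictly multiplicative under the tree-level $\ell_k$'s, is complete on the tree complexes (since the number of internal vertices of a tree is bounded by its number of hairs), and the associated graded comparison is exactly the $L_\infty$-versus-Lie quasi-isomorphism. Replacing your mixed-filtration iteration with this clean filtration, and replacing your unproven acyclicity claim with a reference to the cyclic Koszul duality, recovers the paper's proof.
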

\begin{proof}
Denote by $\TGC_{A,n}$ the  $L_\infty$-algebra  spanned by all trees of $\HGC_{A,n}$.  It is naturally a quotient $L_\infty$-algebra
of $\HGC_{A,n}$.
It follows from Proposition~\ref{prop:MC}, that $\TGC_{A,n}$  has the same set of Maurer--Cartan
elements and gauge relations as $\HGC_{A,n}$. On the other hand, both $\TGC_{A,n}$ and $\UTT_{A,n}=\TGC_{A,n}/IHX$ admit a complete filtration by the number of hairs
minus one. The quotient map $\TGC_{A,n}\to\HGC_{A,n}$ induces  surjective quasi-isomorphisms of associated graded quotients $\gr_i\TGC_{A,n}\to\gr_i\UTT_{A,n}$, $i\geq 1$. We use here the fact that the cyclic $L_\infty$ operad is quasi-isomorphic to the cyclic Lie operad.
By the same argument as in the proof of Proposition~\ref{prop:graphscofibrant}, we get an equivalence of simplicial sets
$\MC_\bullet(\TGC_{A,n})\simeq \MC_\bullet(\UTT_{A,n})$. In particular, we get a bijection on the sets of connected components.
\end{proof}

\begin{rem}\label{r:UTT}
The discussion and result apply mutatis mutandis to the case of a homotopy commutative algebra $A$, except that in this case $\UTT_{A,n}$ is generally not a dg Lie algebra, but inherits an $L_\infty$-structure from $\HGC_{A,n}$. It is easy to see that the induced $L_\infty$-algebra structure of $\UTT_{A,n}$ is determined by the
$C_\infty$-algebra structure of~$A$  restricted  along the operad inclusion $C_\infty \to\Omega L_\infty^c$.
\end{rem}

\subsection{Finiteness}\label{ss:finite}
One important question is when the set of isotopy classes of embeddings is finite. Thanks to Corollary~\ref{cor_main}.b we are able to conclude
that the set $\pi_0\Embbar_\partial(M,\R^n)$ is finite if the corresponding set $\MC/{\sim}$ of gauge equivalence classes of Maurer--Cartan elements is finite.

\begin{ex}\label{ex:finite}
$\pi_0\Embbar(S^3\times S^3,\R^{11})$ is finite. In particular there are finitely many isotopy classes of embeddings $S^3\times S^3\hookrightarrow
\R^{11}$ which are trivial as immersions.\footnote{In fact, by the Haefliger-Zeeman unknotting theorem (see~\cite{Haefliger0,Zeeman}, \cite[Theorem~2.6.b]{Skopenkov08}), the space $\Emb(S^3\times S^3,\R^{11})$ is connected.}
\end{ex}

Indeed, first we notice that $S^3\times S^3$ is formal. On the other hand, according to Proposition~\ref{prop:MC}, degree zero diagrams in
$\HGC_{H^*(S^3)\otimes H^*(S^3),11}$ are trees with $\leq \frac{11-3}{1+2} = 2+\frac{2}{3}$ leaves. But any such tree
$\begin{tikzpicture}[baseline=-.65ex]
\node (v) at (0,0) {$\alpha$};
\node (w) at (1,0) {$\beta$};
\draw (v) edge (w);
\end{tikzpicture}$
has never degree zero. Thus the set $\MC\left(\HGC_{H^*(S^3)\otimes H^*(S^3),11}\right)$ has only one element~0.

\subsection{$M$-unknots}\label{ss:unknot}
The hairy graph complex  $\HGC_{\bar R,n}$ depends only on the Sullivan model $R$ of $M_*$.
The following is an immediate consequence of Corollary~\ref{cor_main}.c-d.

\begin{cor}\label{cor:unknot}
Let $i\colon M\subset\R^m\subset\R^n$ and $i'\colon M'\subset\R^{m'}\subset\R^n$ satisfy the assumptions of Theorem~\ref{thm:FM}.
If the models $\Omega^*(M_*)\simeq\Omega^*(M'_*)$ are quasi-isomorphic as augmented dg commutative algebras, then the components of the trivial embedding are rationally equivalent:
$
\Embbar_\partial(M,\R^n)_i\simeq_\Q \Embbar_\partial(M',\R^n)_{i'}.
$
\end{cor}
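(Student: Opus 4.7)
The plan is to combine Corollary~\ref{cor_main}(c)--(d) with functoriality of the hairy graph complex construction in the algebra $R$. Since the trivial embedding corresponds to the zero Maurer--Cartan element, the rational homotopy type of its component is modeled by the positive degree truncation of the \emph{untwisted} complex $\HGC_{\bar R,n}$, and a quasi-isomorphism $R\simeq R'$ will produce a quasi-isomorphism of the associated $L_\infty$-algebras, which is all that is required.

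First I would apply Corollary~\ref{cor_main}(d) to both embeddings to obtain $m(i)=0\in\HGC_{\bar R,n}$ and $m(i')=0\in\HGC_{\bar R',n}$. Hence, by Corollary~\ref{cor_main}(c), the positive degree truncations $(\HGC_{\bar R,n})_{>0}$ and $(\HGC_{\bar R',n})_{>0}$ with their untwisted $L_\infty$-structures are $L_\infty$-algebra models of the rational homotopy types of $\Embbar_\partial(M,\R^n)_i$ and $\Embbar_\partial(M',\R^n)_{i'}$ respectively.

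Next I would invoke functoriality of the construction $R\mapsto \HGC_{\bar R,n}$ described in Section~\ref{sec:HGCLie}. A zigzag of quasi-isomorphisms of augmented dg commutative algebras connecting $R$ and $R'$ induces a zigzag of quasi-isomorphisms of Hopf $\Gamma$-module structures on $R^{\otimes\bullet}$ and $R'^{\otimes\bullet}$, hence on the cross-effects $\bar R^{\otimes\bullet}$ and $\bar R'^{\otimes\bullet}$ (using flatness over~$\Q$). Through the presentation
\[
\HGC_{\bar R,n} \;=\; \prod_{r\geq 1}\ICGp_n(r)^{*}\hotimes_{\Sigma_r}\bar R^{\otimes r}
\]
this gives a morphism of complete filtered $L_\infty$-algebras preserving the edge-number filtration whose associated graded pieces are quasi-isomorphisms (each being a finite direct sum of finite-dimensional graph contributions tensored over $\Q$ with the quasi-isomorphism $\bar R^{\otimes r}\simeq \bar R'^{\otimes r}$). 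A standard spectral sequence argument then shows that the total map is itself a quasi-isomorphism of complete filtered $L_\infty$-algebras.

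Finally, the positive degree truncation functor preserves quasi-isomorphisms of $L_\infty$-algebras (since the differential out of degree~$1$ lands in degree~$0$ which is discarded, the kernel-in-degree-one construction is exact), and the Sullivan-type correspondence between $L_\infty$-algebra models and rational homotopy types implicit in Corollary~\ref{cor_main}(c) translates the resulting zigzag into the claimed rational equivalence $\Embbar_\partial(M,\R^n)_i\simeq_\Q \Embbar_\partial(M',\R^n)_{i'}$. The main point to verify — essentially bookkeeping — is that the $L_\infty$-quasi-isomorphism survives both the positive-degree truncation and the passage from algebraic models to topological rationalizations, all of which is routine given the setup already in place in the paper.
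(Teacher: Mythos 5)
Your proposal is correct and follows the paper's own reasoning; the paper declares this an ``immediate consequence of Corollary~\ref{cor_main}.c-d,'' and you simply spell out the implicit steps (functoriality of $R\mapsto\HGC_{\bar R,n}$ over a zigzag of quasi-isomorphisms of augmented dgcas, convergence of the edge-number filtration spectral sequence, and preservation of quasi-isomorphisms under the positive-degree truncation). No genuinely different route is taken.

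One small remark on precision: the truncation $(-)_{>0}$ does not preserve arbitrary quasi-isomorphisms (it kills $H_0$), but it does take a quasi-isomorphism of $L_\infty$-algebras to a quasi-isomorphism, since $H_k(\mathfrak{g}_{>0})=H_k(\mathfrak{g})$ for $k\geq 1$ and $H_k(\mathfrak{g}_{>0})=0$ for $k\leq 0$. Your parenthetical justification (``the kernel-in-degree-one construction is exact'') is a little opaque, and it would be cleaner to state directly that $H_1$ of the truncation is $\ker(d\colon \mathfrak{g}_1\to\mathfrak{g}_0)/\mathrm{im}(d\colon \mathfrak{g}_2\to\mathfrak{g}_1)=H_1(\mathfrak{g})$ and that degrees $\geq 2$ are untouched. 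The spectral sequence step also uses that the filtration is complete and degreewise bounded (finitely many graphs with a given number of edges), which is established in Proposition~\ref{prop:HGCLinfty} and which you may want to cite explicitly.
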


\begin{ex}\label{ex:unknot}
It is easy to check that $\Embbar(\R P^2,\R^6)$ is connected.\footnote{This is because $\Emb(\R P^2,\R^6)$ is connected and $\Imm(\R P^2,\R^6)$
is simply-connected.} Since $\R P^2$ is embeddable in $\R^4$ and $\Omega^*(\R P^2)\simeq\Q$, one gets $\Embbar(\R P^2,\R^6)\simeq_\Q *$. More generally,
$\Embbar(\coprod_r\R P^2,\R^6)\simeq_\Q C(r,\R^6)\simeq\FM_6(r)$.
\end{ex}


\begin{prop}\label{prop:formal}
Under the assumptions of Theorem~\ref{thm:FM}, the rational homotopy groups of any component  $\Embbar_\partial(M,\R^n)_\psi$ are subquotients
of those of the component of the trivial embedding $\Embbar_\partial(M,\R^n)_i$.
\end{prop}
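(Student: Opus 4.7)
By Corollary~\ref{cor_main}(\ref{cor_main:Quillen_model})--(\ref{cor_main:unknot}), the rational homotopy groups of $\Embbar_\partial(M,\R^n)_\psi$ are given by $H_k(\HGC_{\bar R,n}^{m(\psi)})$ while those of the trivial-embedding component are given by $H_k(\HGC_{\bar R,n})$, because $m(i)=0$. Hence the problem reduces to the purely algebraic statement: for every Maurer--Cartan element $m\in\MC(\HGC_{\bar R,n})$, the homology $H_k(\HGC_{\bar R,n}^m)$ is a subquotient of $H_k(\HGC_{\bar R,n})$ in each degree $k\geq 1$.

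The strategy is a spectral sequence comparison. I would equip $\HGC_{\bar R,n}$ with a decreasing filtration $F^\bullet$ built from the first Betti number (loop order) of the underlying graph, refined if necessary by the number of internal vertices. A direct inspection of the formulas of Sections~\ref{sec:HGC}--\ref{sec:HGCLie} shows that $d_A$ preserves this filtration, while $\delta_{\mathit{split}}$, $\delta_{\mathit{join}}$ and every higher bracket $\ell_r(m^{\otimes(r-1)},-)$, $r\geq 2$, strictly raises it. This uses Proposition~\ref{prop:MC}, by which $m$ is supported on trees with at least three hairs (no lower-hair contributions are possible for degree reasons), so that any insertion of $m$ comes with a genuine increase in vertex count. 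In particular, both $d$ and $d^m$ preserve $F^\bullet$, and they induce the same differential on the associated graded $\gr^\bullet$. The filtration is complete, bounded below, and, by Proposition~\ref{prop:MC}, degree-wise finite dimensional, so the two spectral sequences converge and share a common $E_1$-page $E_1$.

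Next, applying homological perturbation theory over $\Q$, I would pick a filtration-preserving contraction of $(\HGC_{\bar R,n},d^{(0)})$ onto $E_1$, where $d^{(0)}$ denotes the part of the differential preserving the filtration. Since the remaining pieces of $d$ and $d^m$ strictly raise the filtration while the contracting homotopy preserves it, the HPT series converges and produces transferred differentials $\tilde d$ and $\tilde d_m=\tilde d+\tilde\delta_m$ on $E_1$ such that
\[
H_*(E_1,\tilde d)\cong H_*(\HGC_{\bar R,n}),\qquad H_*(E_1,\tilde d_m)\cong H_*(\HGC_{\bar R,n}^m).
\]
Thus both homologies are realized on the same graded $\Q$-vector space $E_1$, by two differentials that agree modulo a correction $\tilde\delta_m$ built out of iterated compositions involving at least one copy of $m$.

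The final step, and the main obstacle, is to turn this common presentation into an honest subquotient relation. I would introduce a secondary decreasing filtration on $(E_1,\tilde d_m)$ counting the number of copies of $m$ appearing in the transferred operators: $\tilde d$ carries weight $0$, while each component of $\tilde\delta_m$ carries strictly positive weight. Completeness of this secondary filtration follows from the fact that in each fixed homological degree only finitely many $m$-weights can contribute, again thanks to the degree-and-hair bounds of Proposition~\ref{prop:MC}. The associated spectral sequence then has $E_1$-page $H_*(E_1,\tilde d)\cong H_*(\HGC_{\bar R,n})$ and abuts to $H_*(E_1,\tilde d_m)\cong H_*(\HGC_{\bar R,n}^m)$. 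Since the $E_\infty$-page of any convergent spectral sequence is a subquotient of its $E_1$-page, the conclusion follows. The hard part lies in rigorously organizing this secondary filtration on the transferred complex so that each $m$-weighted stratum is well defined and the resulting spectral sequence genuinely converges in each homological degree; this bookkeeping is where the combinatorial specificity of the Maurer--Cartan element (Proposition~\ref{prop:MC} and the tree reduction of Proposition~\ref{prop:utt}) is essential.
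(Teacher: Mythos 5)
You correctly reduce the claim, via Corollary~\ref{cor_main}, to the algebraic assertion that $H_*(\HGC_{\bar R,n}^m)$ is a subquotient of $H_*(\HGC_{\bar R,n})$ for any Maurer--Cartan element $m$, and a spectral-sequence argument is indeed the right tool. However, your chosen filtration forces a detour (HPT plus a secondary filtration) that contains a genuine gap, and you miss the much simpler filtration that makes the argument a one-liner.

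The gap is in the secondary filtration: ``counting the number of copies of $m$ appearing in the transferred operators'' is a decomposition of the \emph{operator} $\tilde d_m$, not a filtration of the underlying vector space $E_1$. A spectral sequence requires a genuine decreasing, complete filtration on the chain complex; an element of $E_1$ carries no intrinsic ``$m$-weight.'' Without exhibiting a filtration on $E_1$ whose associated graded differential is $\tilde d$, the construction does not produce a spectral sequence and the argument does not conclude. You flag this as the ``hard part,'' and indeed it is exactly the missing step.

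The observation the paper exploits, and which you missed, is that genus plus number of hairs is preserved by the \emph{entire} non-deformed differential $\delta = d_A + \delta_{\mathit{split}} + \delta_{\mathit{join}}$, not merely by $d_A$: $d_A$ does not touch the graph; $\delta_{\mathit{split}}$ adds one internal vertex and one edge, leaving both genus and hair count fixed; and $\delta_{\mathit{join}}$, merging $s$ hairs, raises genus by exactly $s-1$ while lowering the hair count by exactly $s-1$, so their sum is unchanged. On the other hand, every bracket insertion of $m$ (and every higher $\ell_r$-term in the twist) strictly raises genus plus hairs, since each summand of $m$ has genus plus hairs $\geq 2$. Hence filtering $\HGC_{\bar R,n}^m$ directly by genus plus hairs gives a spectral sequence with $d_0 = \delta$, so $E_1 \cong H_*(\HGC_{\bar R,n})$, abutting to $H_*(\HGC_{\bar R,n}^m)$; convergence follows from the degreewise finiteness supplied by Proposition~\ref{prop:MC}. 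That is the entire proof, without transfer, without a secondary filtration, and without Proposition~\ref{prop:utt}. Your filtration by genus (refined by vertex count) collapses $\gr(\delta)$ to $d_A$ alone, which is what necessitates the extra machinery. As a small further point, you assert that $m$ is supported on trees with at least three hairs; this is not true in general (the Hopf-link class $\omega_1\! -\! \omega_2$ of Section~\ref{sss:spherical_hopf} is a two-hair tree of degree zero). What one actually needs, and what holds, is that each summand of $m$ has genus plus hairs at least two.
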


\begin{proof}
Let $m$ be a Maurer--Cartan element of $\HGC_{\bar R,n}$ corresponding to $\psi$.
In the complex $\HGC_{\bar R,n}^m$ consider the filtration by the genus of the graphs plus the number of hairs.
The differential $d_0$ in the associated spectral sequence is the non-deformed differential of $\HGC_{\bar R,n}$.
  Thus the $E^1$ term in positive degrees is isomorphic to the positive degree
homology of $\HGC_{\bar R,n}$ which  exactly consists of the rational homotopy groups of $\Embbar_\partial(M,\R^n)_i$.
\end{proof}

In Section~\ref{sss:spherical_hopf} we show that the component of the trivial embedding and that of the Hopf link of
$\Embbar(S^{m_1}\coprod S^{m_2},\R^{m_1+m_2+1})$ have different rational homotopy groups.

\subsection{String links}\label{ss:strings}
In this subsection we apply our results to the spaces  $\Embbar_\p\left(\coprod_{i=1}^r\R^{m_i},
\R^n\right)$, $n-\max(m_i)\geq 3$, of string links modulo immersions which were defined in the introduction.
%
In this case the corresponding manifold $M_*=M\cup\{ \infty\}$ deformation retracts to a wedge of spheres $\vee_{i=1}^rS^{m_i}$.
We can hence take as a Sullivan model
\[
R=\Q[\omega_1,\dots,\omega_r]/(\omega_i\omega_j)_{i,j=1,\dots,r},
\]
where $\omega_i$ is a generator of degree $m_i$, and all products of generators vanish.
In particular $\bar R$ is an $r$-dimensional graded vector space spanned by $\omega_1,\dots,\omega_r$.
It follows that $\HGC_{\bar R,n}$ is a complex of hairy graphs with hairs coming in $r$ colors, corresonding to the possible decorations of the hairs by the $\omega_i$. This graph complex was denoted by $\HGC_{m_1\ldots m_r;n}$ in~\cite{STT}.

In this case the $L_\infty$-structure is fairly simple: The differential is just $\delta=\delta_{split}$ (cf. \eqref{equ:delta}), since $d_R=0$ and $\delta_{join}=0$ by the vanishing of all products in $\bar R$.
By the same reason the $L_\infty$-structure is trivial.
This reflects the fact that the space of string links is in fact a loop space~\cite{Duc}.

By our computations we can then conclude that for $n-\max(m_i)\geq 3$  and $\psi\in  \Embbar_\p\left(\coprod_{i=1}^r\R^{m_i},
\R^n\right)$ we have that
\[
 \Embbar_\p\left(\sqcup_{i=1}^r\R^{m_i},
\R^n\right)_\psi
\simeq_\Q
\MC_{\bullet}(\HGC_{\bar R,n})_{>0}^\alpha,
\]
where $\alpha$ is a Maurer--Cartan element corresponding to (the connected component of) $\psi$.
In particular this means that the rational homotopy groups may be computed as
\[
\pi_k(\Embbar_\p(M,\R^n),\psi)\otimes \Q
\cong
H_k(\HGC_{\bar R,n}^\alpha)
=
H_k(\HGC_{\bar R,n})
\]
for $k\geq 1$. For the last identity we used that the $L_\infty$-structure is trivial and hence the twist does not alter our complex.
(Note also that all components of the space of string links have the same homotopy type as the space in question is a loop space.)
The statement is  true for $k=1$ since by the loop space structure, $\pi_1$ is abelian. This result has been proved for
$n\geq 2\max(m_i)+2$ in~\cite{STT} and was conjectured to hold in codimension greater than or equal to three \cite[Conjecture~3.1]{STT}.
Similarly, we obtain that
\[
H_*(\Embbar_\p(M,\R^n)_\psi,\Q)
\cong
S\bigl(H_{>0}(\HGC_{\bar R,n})\bigr).
\]
In other words, the homology is just the symmetric product of the homotopy, which again could be deduced by the fact the space of string links is a loop space.

Since the $L_\infty$ structure is abelian, the Maurer--Cartan equation and gauge action are abelian as well and we get
\beq{equ:MC-H0}
\MC(\HGC_{\bar R,n})/{\sim} = H_0( \HGC_{\bar R,n}).
\eeq
On the other hand,  using Haefliger's work on the isotopy classes of spherical links~\cite{Haefliger1,Haefliger2.5} together with  \cite[Corollary~20]{FTW} of
the authors' work on the rational homotopy type of the delooping of $\Embbar_\p(\R^m,\R^n)$, $n-m\geq 3$, Songhafouo-Tsopm\'en\'e and the second author proved
\cite[Theorem~4.2]{STT} that $\pi_0 \Embbar_\p\left(\coprod_{i=1}^r\R^{m_i},\R^n\right)$ with respect to the concatenation of links is a finitely generated abelian group isomorphic to $H_0(\HGC_{\bar R,n})$ when tensored with $\Q$:
\beq{equ:pi0}
\Q\otimes\pi_0 \Embbar_\p\left(\sqcup_{i=1}^r\R^{m_i},\R^n\right)\simeq
H_0(\HGC_{\bar R,n}).
\eeq
 Note also that by Proposition~\ref{prop:MC} only trees can contribute to the non-positive degrees. Using this and the fact
that the cyclic $L_\infty$ operad is quasi-isomorphic to the cyclic $Lie$ operad, the space $H_0(\HGC_{\bar R,n})$ can be described as the space
spanned by unitrivalent trees of degree zero with leaves labeled by $1,\ldots,r$, and quotiented out by $IHX$ relations. This fact is quite remarkable as
it relates the study of  spaces of higher dimensional links to the theory of Vassiliev invariants of classical knots and links in~$\R^3$, where the spaces of unitrivalent diagrams modulo $IHX$
relations naturally appear~\cite{BarNatan1,BarNatan2}. For more general embedding spaces, the MC element as a knot invariant can also be formulated purely in terms of unitrivalent trees modulo $IHX$, see Proposition~\ref{prop:utt}.

\subsection{Spherical links}\label{ss:spherical}
A.~Haefliger proved that $\pi_0\Emb\left(\coprod_{i=1}^r S^{m_i},\R^n\right)$, $n>\max(m_i)+2$, is always a finitely generated abelian group~\cite{Haefliger1,Haefliger2,Haefliger2.5}. The product on this set is defined as follows. Given two links $f$ and $g$, we place them in two disjoint balls. We then  connect each $f(S^{m_i})$ with $g(S^{m_i})$ by a thin tube along any path. Note
that because of the codimension condition the complement is simply connected and the choice of the paths does not matter for the resulting isotopy class of the link. The product
is obviously commutative and associative. There are inverses because in codimension $> 2$ the isotopy classes of links coincide with concordance classes.

This product can be easily extended to $\pi_0\Embbar\left(\coprod_{i=1}^r S^{m_i},\R^n\right)$, $n>\max(m_i)+2$.

\begin{prop}\label{pro:pi0}
For $n>\max(m_i)+2$, $$\pi_0\Embbar\left(\sqcup_{i=1}^r S^{m_i},\R^n\right) = \pi_0 \Embbar_\p\left(\sqcup_{i=1}^r\R^{m_i},\R^n\right).$$
\end{prop}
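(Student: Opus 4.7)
The plan is to construct mutually inverse bijections between the two $\pi_0$-sets by a high-codimension close-up and open-up construction, and alternatively to cross-check the answer via the long exact sequence of the homotopy fibration $\Embbar\to\Emb\to\Imm$.

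First I would define a close-up map $\Phi\colon\pi_0\Embbar_\p(\coprod_i\R^{m_i},\R^n)\to\pi_0\Embbar(\coprod_i S^{m_i},\R^n)$. Given a representative $(f,h)$ of a class in the source, where $f$ is a string link and $h$ is a regular homotopy to the standard affine inclusion, both agreeing with the trivial inclusion outside a compact set $K$, pick a large ball $B\supset K$ and work inside $S^n=\R^n\cup\{\infty\}$. In the complementary ball $D=\overline{S^n\setminus B}$ the link and every stage of $h$ consist of standard half-planes meeting at $\infty$, whose boundaries on $\partial B\cong S^{n-1}$ are pairwise disjoint unknotted $(m_i-1)$-spheres. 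Replace each half-plane by a disk $D^{m_i}\subset D$ with matching boundary, chosen mutually disjoint and avoiding $\infty$. Because $n-m_i\geq 3$, general position produces such disks, and a relative version of Haefliger's unknotting theorem \cite{Haefliger1,Haefliger2.5} ensures that any two such choices, and any two choices varying continuously along $h$, are ambient-isotopic rel boundary. This defines $\Phi([f,h])=[\tilde f,\tilde h]$.

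Next I would construct the inverse open-up map $\Psi$. Given $[g,k]\in\pi_0\Embbar(\coprod_i S^{m_i},\R^n)$, pick basepoints $p_i\in S^{m_i}$ and paths from the $g(p_i)$ to a chosen point near infinity in $\R^n$, all disjoint from the link. Since $n-\max m_i\geq 3$, such paths exist and are unique up to homotopy in the complement of $g(\coprod_i S^{m_i})$, so an ambient isotopy moves $g$ into a standard configuration near infinity; analogously for $k$. Excising small open neighborhoods of the $p_i$ and identifying $S^{m_i}\setminus\{p_i\}\cong\R^{m_i}$ compatibly with the standard affine inclusion at infinity returns a representative $(\tilde g,\tilde k)$ of $\Psi([g,k])$.

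That $\Phi$ and $\Psi$ pass to $\pi_0$ and are mutually inverse follows from the same high-codimension unknotting arguments: the close-up and open-up operations undo each other up to ambient isotopy preserving the regular homotopy data. A structurally cleaner alternative is to apply the five lemma to the diagram of long exact sequences associated to the fibrations $\Embbar\to\Emb\to\Imm$ and $\Embbar_\p\to\Emb_\p\to\Imm_\p$, using Haefliger's bijection $\pi_0\Emb(\coprod_i S^{m_i},\R^n)\cong\pi_0\Emb_\p(\coprod_i\R^{m_i},\R^n)$ in codimension $\geq 3$ from \cite{Haefliger1,Haefliger2,Haefliger2.5}, together with the Smale--Hirsch identification of immersion spaces, which yields the matching bijections on $\pi_0\Imm$ and on the image of $\pi_1\Imm$ needed to conclude. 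The main obstacle will be ensuring that the close-up and open-up constructions extend over one-parameter families, so that they truly define maps of homotopy fibers rather than merely of embedding and immersion spaces separately; this requires the parametrized form of the codimension-$\geq 3$ unknotting results, applied uniformly to the regular homotopies $h$ and $k$ alongside the embeddings.
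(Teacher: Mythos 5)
Your proposal takes a genuinely different route from the paper. The paper's own proof decomposes both sides via the split surjection to $\bigoplus_i\pi_0\Embbar(S^{m_i},\R^n)$ (resp.\ $\bigoplus_i\pi_0\Embbar_\p(\R^{m_i},\R^n)$): the kernel $\pi_0^U$ of ``unknots'' is shown to agree across all four variants $\Emb$, $\Embbar$, $\Emb_\p$, $\Embbar_\p$ by citing \cite[Lemma~4.7]{STT} and \cite[Section~2.6]{Haefliger2.5}, while the single-component identification $\pi_0\Embbar(S^{m_i},\R^n)=\pi_0\Embbar_\p(\R^{m_i},\R^n)$ is outsourced to \cite[Theorem~1.1]{FTW2}. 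Your direct close-up/open-up argument would subsume both, but it has genuine gaps. You correctly flag that the main obstacle is extending the close-up and open-up over the regular homotopy coordinate $h$ (resp.\ $k$), yet you do not resolve it, and it is not a technicality: the spaces $\Embbar(S^m,\R^n)$ and $\Embbar_\p(\R^m,\R^n)$ are \emph{not} homotopy equivalent, as the fibration sequence~\eqref{eq:fiber_seq} of Section~\ref{ss:emb_long} shows, so your open-up $\Psi$ cannot exist as a canonical map of spaces; it depends on the choice of pole $p_i$ and normal framing data there (the source of the $S^{n-m-1}$ factor), and checking that these choices wash out on $\pi_0$ \emph{together with} the regular homotopy data is exactly the content of \cite{FTW2} in the one-component case, which the paper does not reprove. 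A concrete point your construction does not address: the opened-up $\tilde k$ must coincide with the standard affine inclusion near infinity for all times $t$, while your ambient isotopy standardizes $g$ near $p_i$ only at time $1$; the intermediate stages $k_t$ are immersions with self-intersections and need not be controlled near $p_i$.

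The five-lemma alternative has the same underlying gap: to compare the two long exact sequences you need an actual map between the two homotopy fibration sequences (a zigzag of maps of spaces commuting up to homotopy), not merely abstract bijections in each column. The only candidate for such a map is again close-up or open-up, which brings you back to the unresolved issue above. Moreover, since $\pi_0$ of these spaces are sets/groups acted on by $\pi_1\Imm$, one needs the version of the five lemma for exact sequences of pointed sets and group actions, which requires precisely the compatibility that the missing map of fibrations would provide.
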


\begin{proof}
The surjective projection
\[
\pi_0\Embbar\left(\sqcup_{i=1}^r S^{m_i},\R^n\right) \to \bigoplus_{i=1}^r \pi_0
\Embbar\left( S^{m_i},\R^n\right)
\]
splits. Denote its kernel by $\pi_0^U\Embbar\left(\coprod_{i=1}^r S^{m_i},\R^n\right)$. The
upper script $U$ stands for \lq\lq{}unknots\rq\rq{}  --- every component of such links is unknotted.
Similarly define the groups $\pi_0^U\Emb\left(\coprod_{i=1}^r S^{m_i},\R^n\right)$,
$\pi_0^U\Embbar_\p\left(\coprod_{i=1}^r \R^{m_i},\R^n\right)$, $\pi_0^U\Emb_\p\left(\coprod_{i=1}^r \R^{m_i},\R^n\right)$. One can easily see that all these four groups are isomorphic:
\begin{multline*}
\pi_0^U\Embbar\left(\sqcup_{i=1}^r S^{m_i},\R^n\right)=
\pi_0^U\Emb\left(\sqcup_{i=1}^r S^{m_i},\R^n\right)=\\
\pi_0^U\Emb_\p\left(\sqcup_{i=1}^r \R^{m_i},\R^n\right)=
\pi_0^U\Embbar_\p\left(\sqcup_{i=1}^r \R^{m_i},\R^n\right),
\end{multline*}
see~\cite[Lemma~4.7]{STT} or~\cite[Section~2.6]{Haefliger2.5} for a similar statement. On the other hand, for one component embedding spaces, the isomorphism
\[
\pi_0\Embbar\left( S^{m_i},\R^n\right) = \pi_0 \Embbar_\p\left(\R^{m_i},\R^n\right)
\]
is proved in~\cite[Theorem~1.1]{FTW2} (see also Section~\ref{ss:emb_long}).
\end{proof}

In fact one also has a bijection on the corresponding sets $\MC/{\sim}$ of Maurer--Cartan elements. In this case $M_*$ is homotopy equivalent to $\left(\coprod_{i=1}^r
S^{m_i}\right)\coprod\{*\}$. Hence,
\beq{equ:cohom}
\bar R =H^*\left(\sqcup_{i=1}^r S^{m_i}\right)=\oplus_{i=1}^r H^*(S^{m_i}).
\eeq
Let $1_i\in H^0(S^{m_i})$ and $\omega_i\in H^{m_i}(S^{m_i})$ denote the generating cohomology classes of the summand $S^{m_i}$.

The corresponding hairy graph complex is spanned by graphs with hairs of $2r$ possible labels: $1_1$, $\omega_1$, $\ldots$, $1_r$, $\omega_r$.
The relations are $\omega_i\omega_j=0$, $1_i1_j=\delta_{ij}1_j$, $1_i\omega_j=\delta_{ij}\omega_j$, $1\leq i,j\leq r$.

It follows from Corollary~\ref{cor:MC}, Proposition~\ref{pro:pi0} and equations~\eqref{equ:MC-H0} and~\eqref{equ:pi0}, that in the range $n>\max(m_i)+2$,
\begin{multline*}
\Q\otimes\pi_0\Embbar\left(\sqcup_{i=1}^r S^{m_i},\R^n\right)=
\Q\otimes\pi_0\Embbar_\p\left(\sqcup_{i=1}^r \R^{m_i},\R^n\right)=\\
H_0\left(\HGC_{H^*(\sqcup_i S^{m_i}),n}\right) = H_0\left(\HGC_{\tilde H^*(\vee_i S^{m_i}),n}\right)=\\
\left.\MC\left(\HGC_{H^*(\sqcup_i S^{m_i}),n}\right)\right/{\sim} = \left.\MC\left(\HGC_{\tilde H^*(\vee_i S^{m_i}),n}\right)\right/
{\sim}.
\end{multline*}

\subsubsection{Example of components of different homotopy type}\label{sss:spherical_hopf}
Unlike in the case of string links, the rational homotopy type of different components of $\Embbar\left(\sqcup_{i=1}^r S^{m_i},\R^n\right)$ can be different.
For example, we consider the space $\Embbar\left( S^{m_1}\sqcup S^{m_2},\R^{m_1+m_2+1}\right)$.
In the corresponding non-deformed hairy graph complex, the graph
\[
m=
\begin{tikzpicture}[baseline=-.65ex]
\node (v) at (0,0) {$\omega_1$};
\node (w) at (1.5,0) {$\omega_2$};
\draw (v) edge (w);
\end{tikzpicture}
\]
is a cycle of degree zero. We expect that, in the Maurer--Cartan element assigned to a link $i: S^{m_1}\sqcup S^{m_2}\hookrightarrow\R^{m_1+m_2+1}$,
the coefficient of this graph $m$ is given by the linking number of the components of the link.
In particular, the Maurer--Cartan element corresponding to the Hopf link is $m$.
Now if we look at the graph
\[
S=
\begin{tikzpicture}[baseline=-.65ex]
\node (v) at (0,0) {$1_1$};
\node (w) at (1.5,0) {$1_2$};
\draw (v) edge (w);
\end{tikzpicture},
\]
it is a non-trivial cycle (of degree $m_1+m_2$) in the non-deformed complex. But it is no more a cycle for the $m$-twisted graph complex:
\[
d^m(S)=[m,S]=
\begin{tikzpicture}[baseline=0]
\node (v1) at (-.6,0) {$\omega_1$};
\node (v2) at (0,0) {$\omega_2$};
\node (v3) at (.6,0) {$1_1$};
\node [int] (w) at (0,.8) {};
\draw (w) edge (v1) edge (v2) edge (v3);
\end{tikzpicture}
+
\begin{tikzpicture}[baseline=0]
\node (v1) at (-.6,0) {$\omega_1$};
\node (v2) at (0,0) {$\omega_2$};
\node (v3) at (.6,0) {$1_2$};
\node [int] (w) at (0,.8) {};
\draw (w) edge (v1) edge (v2) edge (v3);
\end{tikzpicture}.
\]

Topologically this corresponds to the fact that the fibration
\beq{equ:hopf}
\Embbar\left( S^{m_1}\sqcup S^{m_2},\R^{m_1+m_2+1}\right)\to S^{m_1+m_2}
\eeq
that assigns to a link the direction of the vector between the images of the basepoints of the spheres, has a section for the component of the trivial link, and does not
admit a section for the Hopf link. In the former case the homotopy groups of $S^{m_1+m_2}$ appear as direct summands
of those of  the link space, while for the latter case this is not true even rationally.

\subsection{Spherical and long embeddings}\label{ss:emb_long}
The approach with graph complexes that we develop in this paper can also be used to compare the homotopy type of different embedding spaces.
The first non-trivial  question is how the homotopy type of $\Embbar(S^m,\R^n)$ is related to  that of $\Embbar_\p(\R^m,\R^n)$.
In~\cite{FTW2} we study this problem. Note that  \cite[Section 4]{BudneyCohen} gives a connection between
 the embedding spaces $\Emb(S^m,\R^n)$ and the long embedding spaces $\Emb_\p(\R^m,\R^n)$. It turns out though that working modulo immersions
and assuming $n-m>2$ makes this connection more straightforward. By the authors' \cite[Theorem~1.1]{FTW2}, one has a fiber sequence
\beq{eq:fiber_seq}
\Embbar(S^m,\R^n)\to S^{n-m-1}\to B\Embbar_\p(\R^m,\R^n),
\eeq
$n-m>2$, where $B$ denotes the classifying space functor.

On the level of graph-complexes this fiber sequence can be seen as follows.
The graph complexes corresponding to $\Embbar_\p(\R^m,\R^n)$ and $\Embbar(S^m,\R^n)$ are $\HGC_{\bar R,n}$ and $\HGC_{R,n}$,
respectively, where
\[
R = H^*(S^m)=\Q[\omega]/\omega^2=\Q1\oplus\Q\omega.
\]
Explicitly, elements of $\HGC_{R,n}$ are hairy graphs with hairs being either decorated by $1$ or $\omega$,
while elements of $\HGC_{\bar R,n}$ are hairy graphs all of whose hairs are decorated by $\omega$.
The graphs
\begin{align*}
L&=
\begin{tikzpicture}[baseline=-.65ex]
\node (v) at (0,0) {$1$};
\node (w) at (1,0) {$\omega$};
\draw (v) edge (w);
\end{tikzpicture}
&
T&=
\begin{tikzpicture}[baseline=0]
\node (v1) at (-.6,0) {$1$};
\node (v2) at (0,0) {$\omega$};
\node (v3) at (.6,0) {$\omega$};
\node [int] (w) at (0,.6) {};
\draw (w) edge (v1) edge (v2) edge (v3);
\end{tikzpicture}
\end{align*}
are the ones that correspond to the rational homotopy of $S^{n-m-1}$.
The graph $L$ is always non-zero and $T$ is nonzero if and only if $n-m$ is odd.
Let $U$ be a subspace in $\HGC_{R,n}$ spanned by $L$ and $T$.
One has that the (graded vector space) direct sum $U\oplus\HGC_{\bar R,n}$ is a subcomplex in $\HGC_{R,n}$.
The authors proved (see \cite[Theorem~2.1]{FTW2}) that the inclusion
\[
U\oplus\HGC_{\bar R, n}\subset \HGC_{R,n}
\]
is a quasi-isomorphism.

The fiber sequence~\eqref{eq:fiber_seq} implies that $\pi_0\Embbar(S^m,\R^n)=\pi_0\Embbar_\p(\R^m,\R^n)$ and that all components of
$\Embbar(S^m,\R^n)$ have the same homotopy type. The latter fact  can also be seen from the observation that $\Embbar_\p(\R^m,\R^n)$
acts on $\Embbar(S^m,\R^n)$ for which $\pi_0\Embbar(S^m,\R^n)$  is a torsor of $\pi_0\Embbar_\p(\R^m,\R^n)$.

The group $\pi_0\Embbar(S^m,\R^n)=\pi_0\Embbar_\p(\R^m,\R^n)$, $n-m>2$, is a finitely generated abelian group usually torsion except two cases
\[
\text{(a) }n=4k-1,\,\,\,\, m=2k-1,\,\,\,\, k\geq 2;\quad\quad
\text{(b) } n=6k,\,\,\,\, m=4k-1,\,\,\,\, k\geq 1.
\]
In the latter two cases this group is infinite and has rank one~\cite[Corollary~20]{FTW}.\footnote{This fact can also be easily deduced from Haefliger's~\cite[Corollary~6.7 and Remark~6.8]{Haefliger2}.}  The corresponding Maurer--Cartan elements (generators of the groups $H_0(\HGC_{\bar R,n})$) are, respectively,
\begin{align*}
L_\omega&=
\begin{tikzpicture}[baseline=-.65ex]
\node (v) at (0,0) {$\omega$};
\node (w) at (1,0) {$\omega$};
\draw (v) edge (w);
\end{tikzpicture}
&
&\text{and}&
T_\omega&=
\begin{tikzpicture}[baseline=0]
\node (v1) at (-.6,0) {$\omega$};
\node (v2) at (0,0) {$\omega$};
\node (v3) at (.6,0) {$\omega$};
\node [int] (w) at (0,.6) {};
\draw (w) edge (v1) edge (v2) edge (v3);
\end{tikzpicture}.
\end{align*}
Geometrically, the class $L_\omega$ appears as image, under inclusion
\[
\Omega^{2k}V_{2k-1}(\R^{4k-1})\to \Embbar_\p(\R^{2k-1},\R^{4k-1}),
\]
of the $\SO(2k)$ Euler class in the rational homotopy of the Stiefel manifold $V_{2k-1}(\R^{4k-1})=\SO(4k{-}1)/\SO(2k)$. In case (b), the  MC element $T_\omega$ corresponds to the Haefliger trefoil $S^{4k-1}\hookrightarrow\R^{6k}$~\cite{Haefliger1}.

The twisting by $L_\omega$ changes neither the differential, nor any (higher) bracket of $\HGC_{H^*(S^{2k-1}),4k-1}$. This is because
for even codimension $n-m$, any graph with  two $\omega$-hairs attached to an internal vertex is zero.  The twisting by $T_\omega$ changes both the differential
and the $L_\infty$ structure of $\HGC_{H^*(S^{4k-1}),6k}$. One can show that $\HGC_{H^*(S^{4k-1}),6k}^{T_\omega}$
is $L_\infty$ isomorphic to the non-deformed one $\HGC_{H^*(S^{4k-1}),6k}$.

In \cite{FTW2}, we also determine the rational homotopy type of $\Embbar(S^m,\R^n)$, $n-m>2$ (see~\cite[Corollary~1.3]{FTW2}). We prove
that each component of $\Embbar(S^m,\R^n)$ is rationally a product of $K(\Q,j)$'s in all cases except when $n$ is odd and $m$ is even. In the latter case
 $\Embbar(S^m,\R^n)\simeq_\Q \Embbar_\p(\R^m,\R^n)\times S^{n-m-1}$ and the failure of not being rationally abelian is only in the factor $S^{n-m-1}$.

\subsection{Non-linear Maurer--Cartan equation}\label{eq:MC_non_lin}
In all the examples that we considered so far the Maurer--Cartan equation for a hairy graph complex $\HGC_{A, n}$
was reduced to a linear equation and the set $\MC(\HGC_{A, n})/{\sim}$ is identified with $H_0(\HGC_{A, n})$.
However, it is not generally the case. Consider, for example, $\Embbar(S^2\times S^2,\R^7)$. Since $S^2\times S^2$
is embeddable in $\R^5$, our approach can be applied. One has
\[
R = \left.\Q[\omega_1,\omega_2]\right/\omega_1^2{=}\omega_2^2{=}0,
\]
where $|\omega_1|=|\omega_2|=2$.
In degree zero $\HGC_{R,n}$ has two graphs
\[
L_1=
\begin{tikzpicture}[baseline=-.65ex]
\node (v) at (0,0) {$\omega_1$};
\node (w) at (2,0) {$\omega_1\wedge\omega_2$};
\draw (v) edge (w);
\end{tikzpicture};
\qquad
L_2=
\begin{tikzpicture}[baseline=-.65ex]
\node (v) at (0,0) {$\omega_2$};
\node (w) at (2,0) {$\omega_1\wedge\omega_2$};
\draw (v) edge (w);
\end{tikzpicture}.
\]
(In fact there is one more graph of degree zero: the $H$-shaped one with all its four hairs labelled by $\omega_1\wedge\omega_2$,
but it can be killed by gauge transformations as it is the boundary of the $X$-shaped graph again with all its four hairs labelled
by  $\omega_1\wedge\omega_2$. Compare also with Proposition~\ref{prop:utt}.)
If $m=\lambda_1 L_1+\lambda_2 L_2$, $\lambda_1,\lambda_2\in\Q$, then the Maurer--Cartan equation becomes
\[
0=\frac 12[m,m]=\lambda_1\lambda_2 \times
\begin{tikzpicture}[baseline=5]
\node (v1) at (-2,0) {$\omega_1\wedge\omega_2$};
\node (v2) at (0,0) {$\omega_1\wedge\omega_2$};
\node (v3) at (2,0) {$\omega_1\wedge\omega_2$};
\node [int] (w) at (0,1.5) {};
\draw (w) edge (v1) edge (v2) edge (v3);
\end{tikzpicture}.
\]
Thus, the set
\[
\MC/{\sim}=\{\lambda_1 L_1+\lambda_2 L_2\,|\, \lambda_1=0 \text{ or } \lambda_2=0\}.
\]
We conjecture that in this case,  the  MC element is related to the Bo\'echat-Haefliger
invariant~\cite{Boechat,BoeHae}
\[
BH\colon \pi_0\Emb(S^2\times S^2,\R^7)\to H^2(S^2\times S^2,\Z)=\Z^2,
\]
which was shown in~\cite{BoeHae} to have as image $2\Z\times 0\bigcup 0\times 2\Z$. Up to an action of the torsion $\pi_0\Emb(S^4,\R^7)=\Z_{12}$
(that preserves $BH$),
this  invariant   determines the isotopy class of embedding~\cite{CrowSkop,Skopenkov10}.

\subsection{Non-linear gauge relation}\label{ss:gauge}
Even if the MC equation happens to be linear and the MC elements are just zero-cycles, the gauge action can still be non-linear. As an example
consider $\Embbar(S^1\times S^2,\R^6)$. Since the manifold $S^1\times S^2$ is formal, we take
\[
R = H^*(S^1\times S^2)=\Q[\alpha,\beta]\bigr/\beta^2,
\]
where $|\alpha|=1$, $|\beta|=2$. Consider the following diagrams in $\HGC_{R,6}$:
\[
{{\raisebox{2.5ex}{$L_\alpha=$}
\begin{tikzpicture}[baseline=-3.0ex]
\node (v) at (0,0) {$\alpha$};
\node (w) at (2,0) {$\alpha\wedge\beta$};
\draw (v) edge (w);
\end{tikzpicture}}
\atop
{\raisebox{1ex}{$L_\beta=$}
\begin{tikzpicture}[baseline=-1.5ex]
\node (v) at (0,0) {$\beta$};
\node (w) at (2,0) {$\alpha\wedge\beta$};
\draw (v) edge (w);
\end{tikzpicture}}}
\quad\quad\quad\quad
T_{\alpha\wedge\beta}=
\begin{tikzpicture}[baseline=5]
\node (v1) at (-2,0) {$\alpha\wedge\beta$};
\node (v2) at (0,0) {$\alpha\wedge\beta$};
\node (v3) at (2,0) {$\alpha\wedge\beta$};
\node [int] (w) at (0,1.5) {};
\draw (w) edge (v1) edge (v2) edge (v3);
\end{tikzpicture}
\]

By Proposition~\ref{prop:MC}, $L_\beta$ and $T_{\alpha\wedge\beta}$ are the only graphs in $\HGC_{R,6}$ of degree zero. Any MC element has the form
$\lambda L_\beta+\mu T_{\alpha\wedge\beta}$, $\lambda,\mu\in\Q$. We claim that the gauge relation is
\beq{eq:gauge}
\lambda L_\beta+\mu T_{\alpha\wedge\beta}\,\,\sim\,\,\lambda L_\beta\quad \text{ if $\lambda\neq 0$.}
\eeq
Indeed, the following
\[
m=\lambda L_\beta+\mu t T_{\alpha\wedge\beta} -\frac{\mu}{\lambda}dt L_\alpha\in \MC\left(\HGC_{R,6}\otimes\Omega^*(\Delta^1)\right)
\]
is a MC element (note that $[L_\alpha,L_\beta]=T_{\alpha\wedge\beta}$) and $m|_{t=0}=\lambda L_\beta$, $m|_{t=1}=\lambda L_\beta+\mu T_{\alpha\wedge\beta}$. It is easy to see that all the other gauge equivalences are generated by this one. We conclude
\[
\MC/{\sim}=\{ \lambda L_\beta+\mu T_{\alpha\wedge\beta}\,|\, \mu=0 \text{ or } \lambda=0\}.
\]

We conjecture that the coefficient $\lambda$ in the MC element corresponds to the Whitney invariant~\cite[Section~1]{Skopenkov08.5}
\[
W\colon \pi_0\Emb(S^1\times S^2,\R^6)\to H_1(S^1\times S^2,\Z)\simeq H^2(S^1\times S^2,\Z)\simeq \Z,
\]
while the coefficient $\mu$ corresponds to the action of $\pi_0\Emb(S^3,\R^6)=\Z\simeq_{\Q}\pi_0\Embbar(S^3,\R^6)$ (the generators are Haefliger's trefoil in $\pi_0\Emb(S^3,\R^6)=\Z$~\cite{Haefliger1}, and the tripod $T_\omega\in H_0(\HGC_{H^*(S^3),6})=\Q$, see Section~\ref{ss:emb_long}).
It was shown by A.~Skopenkov in~\cite{Skopenkov08.5} that
\begin{itemize}
\item for any integer~$i$ there exists an embedding $S^1\times S^2\hookrightarrow \R^6$
of Whitney invariant~$i$;
\item two isotopy classes of such embeddings have the same Whitney number if and only if they are in the same orbit of the $\pi_0\Emb(S^3,\R^6)$-action;
\item all $\pi_0\Emb(S^3,\R^6)$-orbits in $\pi_0\Emb(S^1\times S^2,\R^6)$ are finite except the one of Whitney invariant~0.
\end{itemize}

In fact in~\cite{Skopenkov08.5} A. Skopenkov completely classifies isotopy classes of smooth embeddings of any closed connected orientable 3-manifold in~$\R^6$
in terms of the Whitney invariant and the $\pi_0\Emb(S^3,\R^6)$-action. On the other hand any compact orientable 3-manifold is parallelizable~\cite{MilnorStasheff}
and therefore is immersible in~$\R^4$. According to Section~\ref{ss:range1} our approach can be applied. In fact, the above computation stays exactly the same, except that for a compact connected orientable $M$ the class $L_\alpha$ above gets replaced by classes $L_{\alpha_i}$ labelled by a basis $\{\alpha_i\}_i$ of $H^1(M)$. This can be compared to and is precisely in agreement with Skopenkov's results.
Note that in the general case we can still take the cohomology $R=H^*(M)$ as a model for $M$, but now we must account for higher homotopy commutative operations. These higher operations however all produce graphs with $\geq 2$ vertices and hence do not enter the above computation.

\section{Sullivan models}\label{s:Sullivan}
For completeness, let us  remark that Theorem \ref{thm:main1} also allows us to write down a Sullivan model for the components of the embedding space, using standard results in the literature. Recall that by $\HGC_{\bar H^*(M_*), n}$ we understand the $L_\infty$ algebra induced by the homotopy commutative structure
of~$\bar H^*(M_*)$, see Section~\ref{ss:HGC_homot}.
\begin{prop}
  In the setting of Theorem \ref{thm:main1} and for $n\geq 4$ and $H^*(M_*)$ concentrated in degrees $\leq n-3$ we have that every connected component of
  $\MC_\bullet( \HGC_{\bar H^*(M_*), n})
  $
  is nilpotent and of finite (homological and homotopical) type.
  Let $m\in \MC(\HGC_{\bar H^*(M_*), n})$ be a Maurer--Cartan element, let $\HGC_{\bar H^*(M_*), n}^{m}$ be the twisted $L_\infty$-algebra and let $(\HGC_{\bar H^*(M_*), n}^{m})_{> 0}$ be its positive degree truncation.
  Then the (cohomological) Chevalley-Eilenberg complex
  \[
    C_{CE}^*\left((\HGC_{\bar H^*(M_*), n}^{m})_{>0}\right)
  \]
  is a Sullivan model for the connected component $\MC_\bullet( \HGC_{\bar H^*(M_*), n})_m$ of $m$.
  In particular, the cohomology of this component is the Chevalley-Eilenberg cohomology
  \[
    H^k(\MC_\bullet( \HGC_{\bar H^*(M_*), n})_m)
  \cong
  H_{CE}^k\left((\HGC_{\bar H^*(M_*), n}^{m})_{> 0}\right) \, .
  \]
  \end{prop}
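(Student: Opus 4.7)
The plan is to reduce the statement to a standard theorem relating the Maurer--Cartan simplicial set of a complete pronilpotent $L_\infty$-algebra of finite type to the Chevalley--Eilenberg cochain algebra of its positive degree truncation (see for instance Berglund, Buijs--F\'elix--Murillo--Tanr\'e, or Getzler). The inputs we need to verify are that, under the degree hypothesis on $H^*(M_*)$, the hairy graph complex $\HGC_{\bar H^*(M_*),n}$ is of finite type degree-wise, that the twisted object $\HGC_{\bar H^*(M_*),n}^{m}$ inherits a complete descending filtration compatible with the $L_\infty$-operations, and that its positive degree truncation is pronilpotent.

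First I would verify degree-wise finite-dimensionality. Fix an integer $k$. A graph $\Gamma$ of degree $k$ in $\HGC_{\bar H^*(M_*),n}$ with $\#E$ edges, $\#V$ internal vertices and hairs decorated by homogeneous elements $a_1,\dots,a_r\in\bar H^*(M_*)$ satisfies
\[
k=(n-1)\#E-n\#V-\sum_{i=1}^r|a_i|.
\]
Since each $|a_i|\geq 1$ and, by the hypothesis, $|a_i|\leq n-3$, and since internal vertices have valence $\geq 3$, the standard estimates (as in Proposition~\ref{prop:MC}) bound $\#E,\#V$ and $r$ in terms of $k$; hence only finitely many isomorphism classes of such graphs exist and each decoration lives in a finite-dimensional space. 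This gives finite-dimensionality of $(\HGC_{\bar H^*(M_*),n})_k$ for every $k$.

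Next I would equip $\HGC_{\bar H^*(M_*),n}$ with the descending filtration by the total number of edges plus hairs (so the $L_\infty$-structure of Proposition~\ref{prop:HGCLinfty} and Section~\ref{sec:homotopy_commutative_extension} respects it by~\eqref{equ:Linfty filtration compat}). Completeness holds as the object is a product over graphs, and the associated graded in each fixed degree is concentrated in finitely many filtration steps. Twisting by a Maurer--Cartan element $m$ preserves this filtration (indeed, the twisted operations $\ell_k^{m}$ add terms with strictly more edges/hairs), so $\HGC_{\bar H^*(M_*),n}^{m}$ is again complete and of finite type. Passing to the positive degree truncation $(\HGC_{\bar H^*(M_*),n}^{m})_{>0}$ preserves all these properties; together with degree-wise finite-dimensionality and the fact that the filtration quotients are concentrated in bounded filtration degrees in each homological degree, one deduces pronilpotence of the truncation.

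Finally, I would invoke the classical comparison result: for any complete pronilpotent $L_\infty$-algebra $\fg$ of finite type concentrated in positive degrees, the cohomological Chevalley--Eilenberg cochain algebra $C_{CE}^*(\fg)$ is a Sullivan model of the simplicial set $\MC_\bullet(\fg)$, and in particular
\[
H^k(\MC_\bullet(\fg))\cong H_{CE}^k(\fg),
\]
while $\MC_\bullet(\fg)$ is a nilpotent space of finite type. Applying this to $\fg=(\HGC_{\bar H^*(M_*),n}^{m})_{>0}$ and using Getzler's identification $\MC_\bullet(\fg)\simeq\MC_\bullet(\HGC_{\bar H^*(M_*),n})_{m}$ of the Deligne--Getzler component of $m$ with the MC simplicial set of the truncated twist, yields the claimed Sullivan model and the cohomology identification. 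The main obstacle will be the bookkeeping around completeness of the filtration after twisting and, more subtly, verifying the degree-wise finite-dimensionality precisely in the borderline case when $H^*(M_*)$ attains degree $n-3$ (one has to see that such maximal-degree hairs can only occur in bounded number for fixed total degree $k$, which again follows from the valence and edge/vertex count above).
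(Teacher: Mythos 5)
Your proposal is correct and follows essentially the same route as the paper: both reduce to Berglund's results (the paper cites \cite[Theorem 5.5]{Be} for the homotopy group computation via $H_*(\HGC_{\bar H^*(M_*),n}^m)$ and \cite[Corollary 1.3]{Be} for the Chevalley--Eilenberg model), with the degree-wise finite-dimensionality supplied by Proposition~\ref{prop:MC} and pronilpotence by the edge-number filtration of Proposition~\ref{prop:HGCLinfty}. You spell out the finite-type and pronilpotence checks in more detail than the paper's terse argument, but the key inputs and the structure of the reduction are the same.
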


  \begin{proof}
  By \cite[Theorem 5.5]{Be} the homotopy groups of the connected component corresponding to the MC element $m$ are computed by $H_*(\HGC_{\bar H^*(M_*), n}^m)$, which is clearly degree-wise nilpotent and of finite type as the underlying $L_\infty$-algebra is.
  Hence the Maurer--Cartan spaces are nilpotent.

  The statement about the Chevalley-Eilenberg complex is obtained by applying \cite[Corollary 1.3]{Be}. 

  \end{proof}

  \begin{rem}
    \newcommand{\HG}{\mathsf{HG}}
  One can also build a model (essentially) out of the Chevalley-Eilenberg complex of $(\HGC_{\bar R, n}^{m})_{>0}$ for a general dg commutative algebra model $R$ of $M_*$ of finite type. 
    One just has to interpret the Chevalley-Eilenberg complex appropriately, as follows.
  One first notes that $\HGC_{\bar R,n}$ is in fact the dual of an $L_\infty$-coalgebra $\HG_{\bar R,n}$.
  Concretely, while elements of $\HGC_{\bar R,n}$ are formal (possibly) infinite series of graphs, elements of $\HG_{\bar R,n}$ are finite linear combinations of (the dual) graphs.
  Then the homological Chevalley complex of the positive degree truncation $(\HG_{\bar R,n}^m)_{>0}$ of the twist of $\HG_{\bar R,n}$,  is a dg commutative algebra model for the corresponding connected component of $\MC_\bullet( \HGC_{\bar R, n})$.
  To see this one first notes that we have a map of $L_\infty$-coalgebras $\HG_{\bar R,n}\to \HG_{\bar H^*(M_*),n}$ and accordingly a map of the (homological) Chevalley-Eilenberg complexes
  \[
  C^{CE}_*\left((\HG_{\bar R,n}^m)_{>0}\right) \to C^{CE}_*\left((\HG_{\bar H^*(M_*),n}^{m'})_{>0}\right),
  \]
  where $m$ is the image of the Maurer--Cartan element $m'\in \MC(\HGC_{\bar H^*(M_*),n})$.  
  The map above is again a quasi-isomorphism of dg commutative algebras, as one can see from the spectral sequence associated to the bounded below exhaustive filtration on the number of edges in graphs on both sides.
  \end{rem}

\begin{bibdiv}
\begin{biblist}

\bib{AroneLambVol}{article}{
Author={Arone, G.},
Author={Lambrechts, P.},
author={Voli\'c, I.},
title={Calculus of functors, operad formality, and rational homology of embedding spaces},
journal={Acta Math.},
volume={199},
year={2007},
number={2},
pages={153-198},
}

\bib{Turchin2}{article}{
author={Arone, G.},
author={Turchin, V.},
     TITLE = {On the rational homology of high-dimensional analogues of
              spaces of long knots},
   JOURNAL = {Geom. Topol.},
    VOLUME = {18},
      YEAR = {2014},
    NUMBER = {3},
     PAGES = {1261--1322},
}

\bib{Turchin3}{article}{
author={Arone, G.},
author={Turchin, V.},
title= {Graph-complexes computing the rational homotopy of high dimensional
  analogues of spaces of long knots},
journal={Ann. Inst. Fourier},
volume={65},
year={2015},
number={1},
pages={1--62},
}

\bib{BarNatan1}{article}{
author={Bar-Natan, Dror},
title={On the Vassiliev knot invariants},
journal={Topology},
volume={34},
year={1995},
number={2},
pages={423--472},
}

\bib{BarNatan2}{article}{
author={Bar-Natan, Dror},
title={Vassiliev homotopy string link invariants},
journal={J. Knot Theory Ramifications},
volume={4},
year={1995},
number={1},
pages={13--32},
}


\bib{BM}{article}{
     AUTHOR = {Berger, Clemens},
    AUTHOR = {Moerdijk, Ieke},
     TITLE = {Axiomatic homotopy theory for operads},
   JOURNAL = {Comment. Math. Helv.},
  FJOURNAL = {Commentarii Mathematici Helvetici},
    VOLUME = {78},
      YEAR = {2003},
    NUMBER = {4},
     PAGES = {805--831},
      ISSN = {0010-2571},
   MRCLASS = {18D50 (18G55 55P48 55U35)},
  MRNUMBER = {2016697},
MRREVIEWER = {David Chataur},
       DOI = {10.1007/s00014-003-0772-y},
       URL = {https://doi.org/10.1007/s00014-003-0772-y},
}

\bib{BM2}{article}{
    AUTHOR = {Berger, Clemens},
    AUTHOR = {Moerdijk, Ieke},
     TITLE = {On an extension of the notion of {R}eedy category},
   JOURNAL = {Math. Z.},
  FJOURNAL = {Mathematische Zeitschrift},
    VOLUME = {269},
      YEAR = {2011},
    NUMBER = {3-4},
     PAGES = {977--1004},
      ISSN = {0025-5874},
   MRCLASS = {18G55 (18G30 55U35)},
  MRNUMBER = {2860274},
MRREVIEWER = {Philippe Gaucher},
       DOI = {10.1007/s00209-010-0770-x},
       URL = {https://doi.org/10.1007/s00209-010-0770-x},
}

\bib{Be}{article}{
author={Berglund, Alexander},
title={Rational homotopy theory of mapping spaces via Lie theory for $L_\infty$-algebras},
journal={Homology Homotopy Appl.},
volume={17},
number={2},
year={2015},
pages={343--369},
}

\bib{WBdB}{article}{
    AUTHOR = {Boavida de Brito, Pedro},
    author={Weiss, Michael},
     TITLE = {Manifold calculus and homotopy sheaves},
   JOURNAL = {Homology Homotopy Appl.},
    VOLUME = {15},
      YEAR = {2013},
    NUMBER = {2},
     PAGES = {361--383},
      ISSN = {1532-0073},
}


\bib{Boechat}{article}{
author={Bo\'echat, J.},
title={Plongements de vari\'et\'es diff\'erentiables orient\'ees de dimension $4k$ dans $\R^{6k+1}$},
journal={Comment. Math. Helv.},
volume={46},
year={1971},
pages={141--161},
}

\bib{BoeHae}{article}{
author={Bo\'echat, J.},
author={Haefliger, A.},
title={Plongements diff\'erentiables des vari\'et\'es orient\'ees de dimension~4 dans $\R^7$},
year={1970},
pages={156--166},
book={
title={Essays on Topology and Related Topics (M\'emoires d\'edi\'es à Georges de Rham)},
publisher={Springer, New York},
},
}


\bib{BG}{article}{
AUTHOR = {Bousfield, A. K.},
author={Gugenheim, V. K. A. M.},
     TITLE = {On {${\rm PL}$} de {R}ham theory and rational homotopy type},
   JOURNAL = {Mem. Amer. Math. Soc.},
  FJOURNAL = {Memoirs of the American Mathematical Society},
    VOLUME = {8},
      YEAR = {1976},
    NUMBER = {179},
     PAGES = {ix+94},
      ISSN = {0065-9266},
}


%
%

\bib{BudneyCohen}{article}{
   author={Budney, Ryan},
   author={Cohen, Fred},
   title={On the homology of the space of knots},
   journal={Geom. Topol.},
   volume={13},
   date={2009},
   number={1},
   pages={99--139},
}

%


\bib{BFMT}{book}{
author={Buijs, U.},
author={F\' elix, Y.},
author={Murillo, A.},
author={Tanr\'e, D.},
title={Lie models in Topology},
note={In preparation},
}

\bib{CL}{article}{
    AUTHOR = {Cheng, Xue Zhi}
    AUTHOR= {Getzler, Ezra},
     TITLE = {Transferring homotopy commutative algebraic structures},
   JOURNAL = {J. Pure Appl. Algebra},
  FJOURNAL = {Journal of Pure and Applied Algebra},
    VOLUME = {212},
      YEAR = {2008},
    NUMBER = {11},
     PAGES = {2535--2542},
      ISSN = {0022-4049},
   MRCLASS = {18G55},
  MRNUMBER = {2440265},
MRREVIEWER = {Fernando Muro},
       DOI = {10.1016/j.jpaa.2008.04.002},
       URL = {https://doi.org/10.1016/j.jpaa.2008.04.002},
}
%
%

%
%

 \bib{CrowSkop}{article}{
 author={Crowley, Diarmuid},
 author={Skopenkov, Arkadiy},
 title={A classification of smooth embeddings of four-manifolds in seven-space, II},
 journal={Internat. J. Math.},
 volume={22},
 year={2011},
 number={6},
 pages={731--757},
 }

%
%
%

%

\bib{Duc}{article}{
author={Ducoulombier, J.},
title={Delooping of high-dimensional spaces of string links},
note={arXiv:1809.00682},
year={2018},
}


\bib{DFT}{article}{
author={Ducoulombier, J.},
author={Fresse, B.},
author={Turchin, V.},
title={Projective and Reedy model category structures for (infinitesimal) bimodules over an operad},
note={arXiv:1911.03890},
year={2019},
}


%
%

\bib{DwyerKan}{article}{
    AUTHOR = {Dwyer, W. G.},
    author={Kan, D. M.},
     TITLE = {Function complexes in homotopical algebra},
   JOURNAL = {Topology},
    VOLUME = {19},
      YEAR = {1980},
    NUMBER = {4},
     PAGES = {427--440},
 }


\bib{FrCobar}{incollection}{
    AUTHOR = {Fresse, Benoit},
     TITLE = {Operadic cobar constructions, cylinder objects and homotopy
              morphisms of algebras over operads},
 BOOKTITLE = {Alpine perspectives on algebraic topology},
    SERIES = {Contemp. Math.},
    VOLUME = {504},
     PAGES = {125--188},
 PUBLISHER = {Amer. Math. Soc., Providence, RI},
      YEAR = {2009},
   MRCLASS = {18D50 (18G55 55U15 55U35)},
  MRNUMBER = {2581912},
MRREVIEWER = {Mikael Vejdemo Johansson},
       DOI = {10.1090/conm/504/09879},
       URL = {https://doi.org/10.1090/conm/504/09879},
}

\bib{Frbook2}{book}{
    AUTHOR = {Fresse, Benoit},
     TITLE = {Modules over operads and functors},
    SERIES = {Lecture Notes in Mathematics},
    VOLUME = {1967},
 PUBLISHER = {Springer-Verlag, Berlin},
      YEAR = {2009},
     PAGES = {x+308},
      ISBN = {978-3-540-89055-3},
}

\bib{FrTransfer}{article}{
    AUTHOR = {Fresse, Benoit},
     TITLE = {Props in model categories and homotopy invariance of
              structures},
   JOURNAL = {Georgian Math. J.},
  FJOURNAL = {Georgian Mathematical Journal},
    VOLUME = {17},
      YEAR = {2010},
    NUMBER = {1},
     PAGES = {79--160},
      ISSN = {1072-947X},
   MRCLASS = {18D50 (16E45 18G55 55P10)},
  MRNUMBER = {2640648},
MRREVIEWER = {Bruno Vallette},
}

\bib{FrI}{book}{
author={Fresse, Benoit},
title={Homotopy of Operads and Grothendieck-Teichm\"uller Groups. Part 1. The algebraic theory and its topological background},
series= {Mathematical Surveys and Monographs},
publisher= {American Mathematical Society, Providence, RI},
volume={217},
year={2017},
pages={xlvi+532},
ISBN={978-1-4704-3481-6},
}

\bib{FrII}{book}{
author={Fresse, Benoit},
title={Homotopy of Operads and Grothendieck-Teichm\"uller Groups. Part 2.  The applications of (rational) homotopy theory methods},
series= {Mathematical Surveys and Monographs},
publisher= {American Mathematical Society, Providence, RI},
volume={217},
year={2017},
pages={xxxv+704},
ISBN={978-1-4704-3482-3},
}

%

\bib{FTW}{article}{
author={Fresse, B.},
author={Turchin, V.},
author={Willwacher, T.},
title={The rational homotopy of mapping spaces of $E_n$ operads},
note={arXiv:1703.06123},
year={2017},
}

\bib{FTW2}{article}{
author={Fresse, B.},
author={Turchin, V.},
author={Willwacher, T.},
title={On the homotopy type of the spaces of spherical knots in $\R^n$},
note={To appear},
year={2020},
}

\bib{FW}{article}{
author={Fresse, B.},
author={Willwacher, T.},
title={The intrinsic formality of $E_n$-operads},
journal={J. Eur. Math. Soc. (JEMS)},
volume={22},
number={7},
year={2020},
pages={2047--2133},
}

\bib{FW2}{article}{
    title={Mapping Spaces for DG Hopf Cooperads and Homotopy Automorphisms of the Rationalization of $E_n$-operads},
    author={Fresse, B.},
    author={Willwacher, T.},
    year={2020},
    eprint={arXiv:2003.02939},
    archivePrefix={arXiv},
    primaryClass={math.AT}
}

\bib{Getzler}{article}{
author={Getzler, Ezra},
title={Lie theory for nilpotent $L_\infty$-algebras},
journal={Ann. of Math. (2)},
year={2009},
volume={170},
number={1},
pages={271--301},
}

\bib{GetzJones}{article}{
author={Getzler, Ezra},
author={Jones, J. D. S.},
title={Operads, homotopy algebra and iterated integrals for double loop spaces},
note={arXiv:hep-th/9403055},
year={1994},
}
%

%

\bib{GK}{article}{
author= {Goodwillie, Thomas G.},
author= {Klein, John R.},
title={Multiple disjunction for spaces of smooth embeddings},
journal={J. Topol.},
volume={8},
year={2015},
number={3},
pages={651-674},
}

\bib{GKW}{article}{
author={Goodwillie, Thomas G.},
author= {Klein, John R.},
author={Weiss, Michael},
title={Spaces of smooth embeddings, disjunction and surgery},
book={
series={Ann. of Math. Studies},
note={Surveys on Surgery Theory: Volume 2. Papers Dedicated to C.T.C. Wall},
volume={145},
},
year={2001},
pages={221-284},
}

\bib{GW}{article}{
    AUTHOR = {Goodwillie, Thomas G.},
    author={Weiss, Michael},
     TITLE = {Embeddings from the point of view of immersion theory. {II}},
   JOURNAL = {Geom. Topol.},
    VOLUME = {3},
      YEAR = {1999},
     PAGES = {103--118 (electronic)},
      ISSN = {1465-3060},
}

\bib{Haefliger0}{article}{
author={Haefliger, Andr\'e},
title={Plongements diff\'erentiables de vari\'et\'es dans vari\'et\'es},
journal={Comment. Math. Helv.},
volume={36},
year={1961},
pages={47--82},
}

\bib{Haefliger1}{article}{
author={Haefliger, Andr\'e},
title={Knotted $(4k-1)$-spheres in $6k$-space},
journal={Ann.  Math.},
volume={75},
number={2},
year={1965},
pages={452-466},
}

\bib{Haefliger2}{article}{
author={Haefliger, Andr\'e},
title={Differentiable embeddings of $S^n$ in $S^{n+q}$ for $q>2$},
journal={Ann. Math.},
volume={83},
number={3},
year={1966},
pages={402--436},
}

\bib{Haefliger2.5}{article}{
author={Haefliger, Andr\'e},
title={Enlacements de sph\`eres en codimension sup\'erieure \`a 2},
journal={Comment. Math. Helv.},
volume={41},
year={1966/67},
pages={51--72},
}

\bib{Haefliger3}{article}{
author={Haefliger, Andr\'e},
title={Sur la cohomologie de l'alg\`ebre de Lie des champs de vecteurs},
journal={Ann. Sci. Ecole Norm. Sup. (4)},
volume={9},
year={1976},
number={4},
pages={503--532},
}

\bib{Haefliger4}{article}{
author={Haefliger, Andr\'e},
title={Rational homotopy of the space of sections of a nilpotent bundle},
journal={Trans. Amer. Math. Soc.},
volume={273},
year={1982},
number={2},
pages={609--620},
}
%

\bib{Hinich}{article}{
    AUTHOR = {Hinich, Vladimir},
     TITLE = {Homological algebra of homotopy algebras},
   JOURNAL = {Comm. Algebra},
  FJOURNAL = {Communications in Algebra},
    VOLUME = {25},
      YEAR = {1997},
    NUMBER = {10},
     PAGES = {3291--3323},
      ISSN = {0092-7872},
   MRCLASS = {18G99 (16E99 18G05 55U35)},
  MRNUMBER = {1465117},
MRREVIEWER = {Kathryn P. Hess},
       DOI = {10.1080/00927879708826055},
       URL = {https://doi.org/10.1080/00927879708826055},
}

\bib{Hov}{book}{
author={Hovey, M.},
title={Model Categories},
series={Mathematical Surveys and Monographs},
volume={63},
publisher={American Mathematical Society},
year={1999},
place={Providence, RI},
}

\bib{KadeishviliMemoir}{book}{
    AUTHOR = {Kadeishvili, T. V.},
     TITLE = {The $A(\infty)$-algebra structure in cohomology, and rational homotopy type},
    SERIES = {T\lasp bilisis A. Razmadzis Saxelobis Mat\lasp ematikis Institutis
              Shromebi [Tbilisi A. Razmadze Mathematics Institute Works]},
    VOLUME = {107},
 PUBLISHER = {``Metsniereba'', Tbilisi},
      YEAR = {1993},
     PAGES = {96},
      ISBN = {5-520-01178-8},
   MRCLASS = {55P62 (16W99 55P15)},
  MRNUMBER = {1484727},
MRREVIEWER = {Stanis\l aw Betley},
}

\bib{KadeishviliSurvey}{incollection}{
    AUTHOR = {Kadeishvili, T. V.},
     TITLE = {Cohomology {$C_\infty$}-algebra and rational homotopy type},
 BOOKTITLE = {Algebraic topology---old and new},
    SERIES = {Banach Center Publ.},
    VOLUME = {85},
     PAGES = {225--240},
 PUBLISHER = {Polish Acad. Sci. Inst. Math., Warsaw},
      YEAR = {2009},
   MRCLASS = {55P62 (55S30)},
  MRNUMBER = {2503530},
       DOI = {10.4064/bc85-0-16},
       URL = {https://doi.org/10.4064/bc85-0-16},
}

\bib{K2}{article}{
   author={Kontsevich, Maxim},
   title={Operads and motives in deformation quantization},
   note={Mosh\'e Flato (1937--1998)},
   journal={Lett. Math. Phys.},
   volume={48},
   date={1999},
   number={1},
   pages={35--72},
}
%
%

 \bib{LVformal}{article}{
    AUTHOR = {Lambrechts, Pascal},
    author={Voli{\'c}, Ismar},
     TITLE = {Formality of the little {$N$}-disks operad},
   JOURNAL = {Mem. Amer. Math. Soc.},
    VOLUME = {230},
      YEAR = {2014},
    NUMBER = {1079},
     PAGES = {viii+116},
}

 \bib{LV}{book}{
author={Loday, J.-L. },
author={Vallette, B.},
title={Algebraic Operads},
number={346},
series= {Grundlehren der mathematischen Wissenschaften},
 publisher= {Springer},
 address={Berlin},
 year={2012},
}

%

\bib{MienneThesis}{book}{
author={Mienn\'e, Micha\"el},
title={Tours de Postnikov et invariants de Postnikov pour les op\'erades simpliciales},
year={2018},
note = {Th\`ese de doctorat (Universit\'e de Lille - Sciences et Technologies)},
url = {http://www.theses.fr/2018LIL1I077},
}

\bib{MienneMemoir}{article}{
author={Mienn\'e, Micha\"el},
title={Postnikov decompositions of operads and of bimodules over operads. With an appendix by Benoit Fresse},
note={in preparation},
year={2020},
}

 \bib{MilnorStasheff}{book}{
 author={Milnor, John W},
 author={Stasheff, James D.},
 title={Characteristic classes},
 series= {Annals of Mathematics Studies},
publisher= {Princeton University Press, Princeton, N. J.; University of Tokyo Press, Tokyo},
volume={76},
year={1974},
pages={vii+331},
}

\bib{Pirashvili}{article}{
   author={Pirashvili, Teimuraz},
   title={Dold-Kan type theorem for $\Gamma$-groups},
   journal={Math. Ann.},
   volume={318},
   date={2000},
   number={2},
   pages={277--298},
}

\bib{Quillen}{article}{
    AUTHOR = {Quillen, Daniel},
     TITLE = {Rational homotopy theory},
   JOURNAL = {Ann. of Math. (2)},
  FJOURNAL = {Annals of Mathematics. Second Series},
    VOLUME = {90},
      YEAR = {1969},
     PAGES = {205--295},
}

\bib{Salvatore}{article}{
author={Salvatore, Paolo},
title={Configuration spaces with summable labels},
conference={
  title={Cohomological methods in homotopy theory},
  address={Bellaterra},
  date={1998},
  },
 book={
   series={Progr. Math.},
   volume={196},
   publisher={Birkh\"auser},
   place={Basel},
},
pages={375--395},
date={2001},
}
%

%
%

\bib{Sinha}{article}{
author={Sinha, Dev},
title={Manifold-theoretic compactiﬁcations of conﬁguration spaces},
journal={Selecta Math. (N.S.)},
volume={10},
year={2004},
number={3},
pages={391--428},
}

%

 \bib{Skopenkov08}{article}{
 author={Skopenkov, Arkady},
 title={Embedding and knotting of manifolds in Euclidean spaces},
  book={
 publisher={Cambridge Univ. Press, Cambridge},
  series={Surveys in contemporary mathematics, 248--342, London Math. Soc. Lecture Note Ser., 347},
 },
 year={2008},
 }

 \bib{Skopenkov08.5}{article}{
 author={Skopenkov, Arkady},
 title={A classification of smooth embeddings of 3-manifolds in 6-space},
 journal={Math. Z.},
 volume={260},
 year={2008},
 number={3},
 pages={647--672},
 }

\bib{Skopenkov10}{article}{
author={Skopenkov, Arkady},
 title={A classification of smooth embeddings of 4-manifolds in 7-space, I.},
 journal={ Topology Appl.},
 volume={157},
year={2010},
 number={13},
 pages={2094--2110},
 }

\bib{Song}{article}{
author={Songhafouo Tsopm\'{e}n\'{e}, Paul Arnaud},
title={The rational homology of spaces of long links},
journal={Algebr. Geom. Topol.},
volume={16},
year={2016},
number={2},
pahes={757-782},
}

\bib{STT}{article}{
	author={Songhafouo Tsopm\'{e}n\'{e}, Paul Arnaud},
	author={Turchin, Victor},
	title={Rational homology and homotopy of high-dimensional string links},
	journal={Forum Math.},
	volume={30},
	date={2018},
	number={5},
	pages={1209--1235},
}

\bib{Sullivan}{article}{
author={Sullivan, Dennis},
title={Infinitesimal computations in topology},
journal={Inst. Hautes Etudes Sci. Publ. Math.},
volume={47},
year={1977},
pages={269--331 (1978)},
}

\bib{Turchin1}{article}{
    AUTHOR = {Turchin, Victor},
     TITLE = {Hodge-type decomposition in the homology of long knots},
   JOURNAL = {J. Topol.},
    VOLUME = {3},
      YEAR = {2010},
    NUMBER = {3},
     PAGES = {487--534},
}

\bib{Turchin4}{article}{
author={Turchin, Victor},
title={Context-free manifold calculus and the Fulton--MacPherson operad},
journal={Algebr. Geom. Topol.},,
year={2013},
volume={13},
number={3},
pages={1243--1271}
}
%

\bib{TW}{article}{
author= {Turchin, Victor},
author={Willwacher, Thomas},
title={Relative (non-)formality of the little cubes operads and the algebraic Cerf Lemma},
journal={Amer. J. Math.},
volume={140},
year={2018},
number={2},
pages={277--316},
}

\bib{Zeeman}{article}{
author={Zeeman, E. C.},
title={Isotopies and knots in manifolds},
pages={187--193},
year={1962},
conference={
  title={Topology of 3-manifolds and related topics},
  address={Proc. The Univ. of Georgia Institute},
  date={1961},
  },
 book={
   publisher={Prentice-Hall, Englewood Cliffs, N.J},
},
}

\end{biblist}
\end{bibdiv}

\end{document}